\newtheorem{cor}{Corollary}[section]
\newtheorem{prop}{Proposition}[section]
\declaretheorem[name=Theorem,numberwithin=section]{thm}
\declaretheorem[name=Lemma,numberwithin=section]{lem}
\theoremstyle{remark}
\newtheorem{rem}{Remark}[section]
\theoremstyle{definition}
\newtheorem{example}{Example}[section]
\newtheorem{definition}{Definition}[section]
\numberwithin{equation}{section}
 \def\E{\mathbb{E}}
 \def\oE{\widetilde{\mathbb{E}}}
 \def\P{\mathcal{P}}
 \def\B{\mathcal{B}}
 \def\M{\mathcal{M}}
 \def\q{\gamma}
 \def\I{D}
 \def\T{{\widetilde T}} 
 \def\epi{\mathop{\text{epi}}}
 \def\mysmallskip{\vspace*{0.15cm}}
 \def\0{\mathbf{0}}
 \def\1{\mathbf{1}}
\def\old#1{{}}
\begin{document}

\title{On Convergence of Value Iteration for a Class \\ of Total Cost Markov Decision Processes\thanks{This research is in part supported by a grant from Alberta Innovates -- Technology Futures.}}
\author{Huizhen Yu\thanks{Department of Computing Science, University of Alberta. Email: janey.hzyu@gmail.com}}
\date{}
\maketitle

\begin{abstract}
We consider a general class of total cost Markov decision processes (MDP) in which the one-stage costs can have arbitrary signs, but the sum of the negative parts of the one-stage costs is finite for all policies and all initial states. This class, which we refer to as the General Convergence (GC for short) total cost model, contains several special classes of problems, e.g., positive costs problems, bounded negative costs problems, and discounted problems with unbounded one-stage costs. We study the convergence of value iteration for the GC model, in the Borel MDP framework with universally measurable policies.  Our main results include: (i)~convergence of value iteration when starting from certain functions above the optimal cost function; (ii)~convergence of transfinite value iteration starting from zero, in the special case where the optimal cost function is nonnegative; and (iii)~partial convergence of value iteration starting from zero, for a subset of initial states.

These results extend several previously known results about the convergence of value iteration for either positive costs problems or GC total cost problems. In particular, the first result on convergence of value iteration from above extends a theorem of van der Wal for the GC model. The second result relates to Maitra and Sudderth's analysis of transfinite value iteration for the positive costs model.
It suggests connections between the two total cost models when the optimal cost function is nonnegative, and it leads to additional results on the convergence of ordinary non-transfinite value iteration, with a suitably defined dynamic programming operator, for finite state or finite control GC problems. The third result on partial convergence of value iteration is motivated by Whittle's bridging condition for the positive costs model, and provides a novel extension of the bridging condition to the GC model, where there are no sign constraints on the costs.
\end{abstract}

\setlength{\unitlength}{1cm}
\begin{picture}(0,0)(0,0)
\put(-0.52,15.0){\fontsize{12}{14} \selectfont Technical Report}
\put(-0.52,14.4){\fontsize{12}{14} \selectfont \it Nov.\ 5, 2014 (revised Mar.\ 2015)}
\end{picture}

\bigskip 
\noindent {\bf Key words:} discrete-time stochastic optimal control; Markov decision processes; infinite spaces; dynamic programming; value iteration; convergence

\clearpage
\tableofcontents

\clearpage

\section{Introduction} \label{sec1}
\markboth{\rm \S \ref{sec1}. Introduction}{\rm \S \ref{sec1}. Introduction}

In this paper we study convergence properties of value iteration for a class of Markov decision processes (MDP) under the undiscounted total cost criterion.
Specifically, we consider problems in which the one-stage costs can take both positive and negative values, but we assume that under any policy and for any starting state, the expected total sum, over the infinite horizon, of the negative parts of the one-stage cost is finite. 
Following the extensive survey on this class of MDP (Feinberg \cite{Fein02}), we shall refer to this total cost model as General Convergence total cost model (GC for short). It contains several classes of special models, in particular, the positive costs model (P), where all the one-stage costs are nonnegative, the negative costs model (N) where all the one-stage costs are nonpositive and the optimal costs are finite, as well as the bounded negative costs model \cite[Chap.\ 7.2]{puterman94} and a subset of the discounted models with unbounded costs (UD).

It is known that for the (GC) model, value iteration starting from the constant function zero need not converge to the optimal cost function, surprisingly, even for finite state and control problems (see van der Wal~\cite[Example 3.2]{vdW81}, Feinberg~\cite[Example 6.10]{Fein02}). 
This is also true for the positive costs model (P) (although convergence is ensured for (P) if each state has only a finite number of feasible controls~\cite[Prop.\ 9.18]{bs} or more generally, if certain compactness conditions (\cite{b77}, \cite[Prop.\ 9.17]{bs}) or semicontinuity and compactness model assumptions~\cite{Schal75} are satisfied).
In the (P) case, Maitra and Sudderth established the convergence of transfinite value iteration \cite{MS92}, and Whittle formulated a sufficient condition, called the bridging condition, for the convergence of value iteration starting from zero as well as from any function between zero and a multiple of the optimal cost function \cite{Whit79} (see also \cite{Har80,puterman94}). Recently, motivated by these earlier results, Yu and Bertsekas showed that for the (P) case, value iteration always converges from above, when it starts within the set of functions that are above the optimal cost function and yet bounded by a multiple of the optimal cost function~\cite{YuB-mvipi}. In this paper, we extend these results for the (P) case to the total cost problems in the (GC) model, where there are no sign constraints on the one-stage costs.
 
The main contributions of this paper are as follows:
\begin{itemize}
\item[(i)] We prove that value iteration always converges from above if it starts within a set of functions that are above the optimal cost function and yet bounded by a multiple of a certain nonnegative function. This nonnegative function is given by $J^{*+} + J^{*-}$, where $J^{*+}, J^{*-} \geq 0$ are the optimal cost functions with respect to the positive part and negative part of the one-stage cost function, respectively (see Theorem~\ref{thm-vi}).  
This result extends a theorem of van der Wal~\cite[Theorem 3.7]{vdW81} about the convergence of value iteration for (GC).
\item[(ii)] We prove that if the optimal cost function is nonnegative, then when suitably defined for (GC), transfinite value iteration starting from zero converges to the optimal cost function (see Theorem~\ref{thm-relate2P}). This result is analogous to the transfinite value iteration result of Maitra and Sudderth for positive costs problems \cite{MS92}. We also show that if the optimal cost function is nonnegative, then (a) between zero and the optimal cost function, the dynamic programming operator has no fixed points other than the optimal cost function itself (see Cor.~\ref{cor-gc1}); and (b) convergence of ordinary non-transfinite value iteration, with a suitably defined operator, can be ensured for finite control problems (see Prop.~\ref{prp-vi-pos2}), as well as for finite state problems that have finite optimal costs (see Prop.~\ref{prp-vi-pos1}).
\item[(iii)] We formulate a novel sufficient condition, in the style of Whittle's bridging condition \cite{Whit79}, for \emph{partial convergence} of value iteration in (GC) (see Theorems~\ref{thm-vi-partial},~\ref{thm-vi-partial2} and Remark~\ref{rmk-bridging}). It characterizes the convergence of value iteration starting from zero, for a \emph{subset} of states.
\end{itemize}
Besides the closely related early works mentioned above, we will discuss later, with more precise mathematical terms, some other related works in connection with our results in Sections~\ref{sec2.3} and~\ref{sec4.2-discussion}.

To present our analyses of the (GC) model for general state and control spaces, we shall consider Borel-spaces MDP in the universal measurability framework~(Shreve and Bertsekas~\cite{ShrB79}, Bertsekas and Shreve~\cite[Part II]{bs}). We shall derive various optimality and convergence properties for the (GC) model in this framework. Most of the ideas in our analyses of value iteration are, however, independent of the measurability concepts, so we will just summarize the universal measurability framework, selectively note the properties of analytic sets and universally measurable policies needed in our proofs, and refer to the original sources for some of the details. (As to the deep mathematical reason for adopting this framework, we refer the readers to the articles \cite{BFO74,Shr79} and the monograph \cite{bs}.) 
To our knowledge, the (GC) model has not been thoroughly studied in the universal measurability framework, so some of the optimality properties of the (GC) model that we present in this paper can also be of independent interest.

While our focus will be on value iteration, it is worth noting that generally, for undiscounted total cost MDP, it is not easy to find near-optimal policies or optimal ones such as optimal stationary policies (if they exist), even if the optimal cost function is available.  For positive or negative costs models as well as for the (GC) model, the existence proofs for near-optimal policies are based on constructing such a policy, using the optimal cost function in addition to some other information (cf.\ the proof of \cite[Prop.\ 9.20]{bs} and the proof of Theorem~\ref{thm-m2}(a) given in Appendix~\ref{appendix1} of this paper). Thus in theory, methods of finding a near-optimal policy are available, although they may not be computationally tractable. As to finding an optimal stationary policy, the standard policy iteration method is known to have convergence issues even when such a policy exists (see e.g., \cite[Chap.\ 7]{puterman94}); recently, in a broader context of constrained total cost MDP, a new method based on linear programming has been established by \cite{DuHP12,DuP13} for semicontinuous models under certain conditions.

This paper is organized as follows. In Section~\ref{sec2} we define formally the class of total cost problems in (GC), using the universal measurability framework, and we summarize some of the fundamental optimality properties of this model. In Section~\ref{sec3} we establish the convergence of value iteration starting within a certain set of functions above the optimal cost function. In Section~\ref{sec4} we consider value iteration starting from the constant function zero. We first show, in Section~\ref{sec4.1}, that if the optimal cost function is nonnegative, then the (GC) model exhibits some properties similar to those in the positive model (P). These include the fixed point properties of the dynamic programming operator, the convergence of transfinite value iteration, and the convergence of ordinary non-transfinite value iteration for finite state or finite control problems.
We then present in Section~\ref{sec4.2} our extension of Whittle's bridging condition. We also discuss some direct consequences of this result, including one that relates to the sufficient conditions given earlier by van Hee et al.~\cite{vanH77}, and by Hern\'{a}ndez-Lerma and Lasserre \cite{HL99}, for value iteration. In Section~\ref{sec5} we use examples to illustrate the convergence results obtained in this paper. In Section~\ref{sec6} we conclude the paper with several remarks on open questions for future research. Appendices~\ref{appendix2}-\ref{appendix3} collect some proofs or outlines of proofs that require an in-depth use of the Borel MDP and the universal measurability framework.

\section{Background} \label{sec2}
\markboth{\rm \S \ref{sec2}. Background}{\rm \S \ref{sec2}. Background}
 
\subsection{Borel-Spaces MDP and the Total Cost Model (GC)} \label{sec2.1}

We consider Borel-spaces MDP in the universal measurability framework~\cite[Part II]{bs}. 
Let $S$ and $C$ denote the \emph{state space} and the \emph{control space}, respectively. 
We assume that they are Borel spaces.\footnote{A Borel space is a Borel subset of some Polish space (a complete separable metric space).} 
We write $x$ for a state in $S$ and $u$ for a control in $C$. Associated with each state $x$ is a nonempty set $U(x) \subset C$, consisting of \emph{feasible controls} at $x$. The set-valued function $U: x \mapsto U(x)$ thus specifies the \emph{control constraint}, and its graph
$ \Gamma = \big\{ (x, u) \mid x \in S, u \in U(x) \big\}$
is assumed to be an analytic subset of $S \times C$.
\footnote{There are several equivalent definitions of analytic sets in a Polish space (see \cite[Prop.\ 7.41]{bs}, \cite[Chap.\ 13.2]{Dud02}). The empty set is analytic. A nonempty analytic set is by definition the image of some Borel set in some Polish space, under a Borel measurable mapping. In particular, if $X, Y$ are two Polish spaces and $B$ is a Borel subset of $X \times Y$, the projection of $B$ into $X$ is an analytic set in $X$.} 
A given function $g: \Gamma \to [-\infty, +\infty]$ specifies the \emph{one-stage cost} $g(x,u)$ for each state and feasible control pair $(x, u) \in \Gamma$. 
We assume that the function $g$ is lower semi-analytic.
\footnote{A function $f: D \to [-\infty, +\infty]$ is lower semi-analytic if and only if its domain $D$ is an analytic set and its level sets $\{ y \in D \mid f(y) < c\}$ are analytic for all real $c$ \cite[Def.\ 7.21]{bs}. Equivalently, $f$ is lower semi-analytic if and only if its epigraph $\{(y,c) \mid f(y) \leq c, \, y \in D, \, c \in (-\infty, + \infty) \}$ is analytic (see \cite[p.\ 186]{bs}, \cite{MS92}).}
At state $x$, if we apply a control $u \in U(x)$, a one-stage cost $g(x,u)$ is incurred, and the system then moves to the next state $x'$ according to a given stochastic kernel $q(dx' \!\mid x, u)$. More precisely, the kernel $q$ for \emph{state transition} is defined as follows. For a Borel space $Y$, we denote by $\P(Y)$ the set of probability measures on the Borel $\sigma$-algebra on $Y$. 
The state transition kernel $q(dx' \!\mid x, u)$ maps each $(x,u) \in S \times C$ to a probability measure in $\P(S)$, and furthermore, $q$ is assumed to be Borel measurable, meaning that the mapping $q: (x, u) \mapsto q(\cdot \!\mid x, u) \in \P(S)$ is Borel measurable (where the space $\P(S)$ is equipped with the weak topology).
  
For $k \geq 0$, we denote by $(x_k, u_k)$ the state and control pair at time $k$. 
We define a \emph{policy} for the above MDP to be a sequence of stochastic kernels, $\pi=(\mu_0,\mu_1, \ldots)$, where for each $k \geq 0$, $\mu_k$ is a universally measurable stochastic kernel
\footnote{For two Borel spaces $Y, Z$, a universally measurable stochastic kernel on $Y$ given $Z$ is by definition a universally measurable mapping from $Z$ to the space $\P(Y)$. Here $\P(Y)$ is equipped with the weak topology, which makes $\P(Y)$ a Borel space \cite[Cor.\ 7.25.1]{bs}, and the $\sigma$-algebra on $Z$ is the universal $\sigma$-algebra (see e.g., \cite[Def.\ 7.18]{bs} and \cite[Chap.\ 3.3]{Dud02} for the completion of measures and the universal $\sigma$-algebra).}
on $C$ given $(S \times C)^{k} \times S$ such that 
$$\mu_k(U(x_k) \mid x_0, x_1, \ldots, x_k) = 1, \qquad \forall \,  (x_0, x_1, \ldots, x_k) \in (S \times C)^{k} \times S. $$ 
We denote by $\Pi$ the set of all policies. For each initial state $x \in S$, a policy $\pi$ induces a stochastic process $(x_0, u_0, x_1, u_1, \ldots)$ on the space $(S \times C)^\infty$ (endowed with the universal $\sigma$-algebra), with $x_0=x$. The probability of this process is determined uniquely by the collection of the stochastic kernels comprising $\pi$, together with the state transition kernel $q(dx'\!\mid x, u)$ \cite[Prop.\ 7.45]{bs}. 
We denote by $\E^\pi_x$ expectation with respect to this induced probability measure on $(S \times C)^\infty$. 

We define structured families of policies in the standard way. A policy is \emph{nonrandomized} if for each $k$ and each $(x_0, u_0, \ldots, x_k)$, $\mu_k(du_k \! \mid x_0, x_1, \ldots, x_k)$ assigns probability one to a single control in $U(x_k)$. A policy is \emph{semi-Markov} (resp.\ \emph{Markov}) if for each $k$, the function $\mu_k: (x_0, u_0, \ldots, x_k) \mapsto \mu_k(\cdot \mid x_0, u_0, \ldots, x_k)$ depends only on $(x_0, x_k)$ (resp.\ $x_k$). A Markov policy of the form $\pi=(\mu, \mu, \ldots)$ is called a \emph{stationary} policy, in which case we simply write $\mu$ for that policy.

We mention that since the set $\Gamma$ of state and feasible control pairs is analytic, a nonrandomized stationary policy exists by the Jankov-von Neumann selection theorem~\cite[Prop.\ 7.49]{bs}, and therefore, the set $\Pi$ of policies is nonempty.\footnote{By comparison, if $\Gamma$ is assumed to be Borel measurable instead and only Borel measurable policies are allowed, then, without additional assumptions, the set of policies can be empty \cite{Blk-borel}.}
We refer to the monograph~\cite[Part II]{bs} for a full introduction of the universal measurability framework, including the mathematical background for the preceding definitions, the properties of lower semi-analytic functions and universally measurable policies, and measurable selection theorems upon which the framework is built.\footnote{For a quick overview of some of these subjects, we refer the readers to the articles \cite{BFO74,MS92,Shr79} and the recent paper~\cite[Section 2]{YuB-mvipi}. For a general introduction to Borel MDP with Borel measurable policies, see the book \cite{Dynkin79}; for analytic sets, see also \cite[Chap.\ 13.2]{Dud02} and \cite[Chap.\ 6.2]{MS96}.}

The class (GC) of total cost problems is defined by a finiteness condition on the sums of the negative parts of the one-stage costs under any policy. More specifically, let us write $g$ in terms of its positive part $g_+$ and negative part $g_-$ as:
$$g = g_+ - g_-, \qquad \text{where} \ \ \  g_+ = \max \{ g, 0\}, \ \ \ g_- =  \max \{-g, 0\}.$$
For any policy $\pi \in \Pi$, define
$$ J_{\pi}^+(x) = \E^\pi_x \left\{ \sum_{k=0}^\infty g_+(x_k,u_k) \right\}, \qquad J_{\pi}^-(x) = \E^\pi_x \left\{ \sum_{k=0}^\infty g_-(x_k,u_k) \right\}, \quad x \in S,  $$
where the expectations are well-defined by the monotone convergence theorem.

\begin{definition}[The (GC) total cost model]
An MDP given above is in the \emph{(GC)} class if for all policies $\pi \in \Pi$, $J_\pi^-$ is real-valued, or equivalently, if
\begin{equation} \label{eq-tmodel}
    \sup_{\pi \in \Pi} J_\pi^-(x)  < \infty, \qquad \forall \, x \in S.
\end{equation}    
\end{definition}

Note that the condition (\ref{eq-tmodel}) requires in particular $g  > - \infty$. The equivalence mentioned in the preceding definition can be verified directly. As a result of the condition (\ref{eq-tmodel}), for an MDP in the (GC) class, 
we can define the \emph{total cost of a policy} $\pi$ in several equivalent ways:
\begin{align*}
  J_\pi(x)  : = & \, \limsup_{n \to \infty} \, \E^\pi_x \left\{ \sum_{k=0}^n g(x_k,u_k) \right\} = \lim_{n \to \infty} \E^\pi_x \left\{ \sum_{k=0}^n g(x_k,u_k) \right\}  
              = \E^\pi_x \left\{ \sum_{k=0}^\infty g(x_k,u_k) \right\}  \\
              = & \, \, \E^\pi_x \left\{ \sum_{k=0}^\infty g_+(x_k,u_k) \right\}  -  \E^\pi_x \left\{ \sum_{k=0}^\infty g_-(x_k,u_k) \right\} = J_\pi^+(x) - J_\pi^-(x).
\end{align*} 
All these expressions are well-defined. In particular, under the condition (\ref{eq-tmodel}), the sums of the one-stage costs, $\sum_{k=0}^n g(x_k,u_k)$ and $\sum_{k=0}^\infty g(x_k,u_k)$, along a trajectory $(x_0, u_0, x_1, u_1, \ldots)$, are well-defined with probability one under any policy. The situation $\infty - \infty$ does not occur in these summations (except on a negligible set of trajectories) or in any of the expectations above.
The condition~(\ref{eq-tmodel}) is called the ``general convergence condition'' \cite{Fein02}, from which comes the acronym (GC) for the class of problems satisfying this condition. 

With the total cost of a policy defined as above,
the \emph{optimal cost function} $J^*$ is given by 
$$J^*(x) = \inf_{\pi \in \Pi} J_\pi(x) > - \infty, \qquad \forall \,  x \in S,$$ 
where $J^*$ does not take the value $-\infty$ by condition~(\ref{eq-tmodel}).
A policy $\pi$ is \emph{optimal for state} $x$, if $J_\pi(x) = J^*(x)$. If this holds for all $x$, we say $\pi$ is an \emph{optimal policy}. Similarly, for $\epsilon > 0$, a policy $\pi$ is \emph{$\epsilon$-optimal for state} $x$, if $J_\pi(x) \leq J^*(x) + \epsilon$; and if this holds for all $x$, we say $\pi$ is an \emph{$\epsilon$-optimal policy}.

\subsubsection*{Special Cases of the (GC) Model}

As noted earlier, the (GC) class contains several special classes of total cost problems.
\footnote{Some total cost problems are excluded from the (GC) model by the condition (\ref{eq-tmodel}). As a simple example, if certain states can form a cycle in the state space under some policy, where some of the edges of the cycle have negative costs, then the negative edges will give rise to a total amount of $-\infty$ when the system goes through the cycle infinitely many times. This violates the condition (\ref{eq-tmodel}), so even though the net cost may be nonnegative, such problems are not in the (GC) class. Shortest path or stochastic shortest path problems can be of this type. They can be analyzed as total cost problems under certain model conditions~\cite{BeT-ssp}, when the average costs of all policies are nonnegative.} 
The case $g\geq 0$ is the positive costs model (P). The case $g \leq 0$ is the negative costs model (N) with the additional condition that the optimal cost function is real-valued. The case where for each state $x$, there exists a control $u \in U(x)$ with $g(x,u) \leq 0$, is the bounded negative costs model discussed in \cite[Chap.\ 7.2]{puterman94} (where it is called positive bounded model in the reward maximization framework). 

The (GC) model also covers a set of discounted problems with unbounded one-stage costs.
Generally speaking, we can always reformulate discounted problems, with possibly transition-dependent discounting, as equivalent undiscounted total cost problems, using a standard procedure: Enlarge the state space to $S \cup \{\Delta\}$, where $\Delta$ represents an absorbing cost-free termination state, and then set the probability of transitioning to $\Delta$ from a state in $S$ to be one minus the respective discount factor. 
For simplicity, in this paper we shall only consider a subset of the discounted problems that fall into the (GC) class. The discount factor in these problems is a given number, independent of the state transitions. We will refer to this class as (UD) in the rest of the paper. 

\begin{definition}[The unbounded discounted model (UD)] \label{def-ud}
Let $\beta \in [0,1)$. An MDP given above is in the class \emph{(UD)} if, as in the definition of (GC), 
$\sup_{\pi \in \Pi} J_\pi^-(x)  < \infty$ for all $x \in S$ and the objective is to minimize $J_\pi$ over all policies $\pi \in \Pi$, but with the cost functions $J_\pi^-, J_\pi$ defined by the $\beta$-discounted total costs of $\pi$ as 
$$ J_{\pi}^-(x) = \E^\pi_x \left\{ \sum_{k=0}^\infty \beta^k g_-(x_k,u_k) \right\}, \qquad J_{\pi}(x) = \E^\pi_x \left\{ \sum_{k=0}^\infty \beta^k g(x_k,u_k) \right\}, \qquad \forall \, x \in S.$$
\end{definition}

\subsection{Some Optimality Properties of the (GC) Model}

We discuss now some general properties of the optimal cost function and optimal policies for the (GC) model in the universal measurability framework. Since we focus exclusively on the (GC) class of total cost problems, in all the theorems given in this paper, we will not state the (GC) condition~(\ref{eq-tmodel}) explicitly. 

Let $A(S)$ denote the set of lower semi-analytic functions $J: S \to [-\infty, + \infty]$, and let $\M(S)$ denote the set of universally measurable functions
$J: S \to [-\infty, + \infty]$.
\footnote{A function $J : S \to [-\infty, + \infty]$ is universally measurable if it is measurable with respect to the universal $\sigma$-algebra on $S$. A lower semi-analytic function is universally measurable, so $A(S) \subset \M(S)$ \cite[Chap.\ 7.7]{bs}.} 
For the (GC) model, the dynamic programming operator $T: A(S) \to A(S)$ is defined by 
\begin{equation} \label{eq-T}
    T(J)(x) : = \inf_{u \in U(x)} \left\{ g(x,u) + \int_S J(x') \, q(dx' \! \mid x, u) \right\},  \qquad x \in S, \ \forall \, J \in A(S).
\end{equation}    
Associated with a stationary policy $\mu$ is the dynamic programming operator $T_\mu:\M(S) \to \M(S)$ defined by
$$ T_\mu(J)(x) : = \int_{C}  \left\{ g(x,u) + \int_S J(x') \, q(dx' \! \mid x, u) \right\} \, \mu(du \!\mid x), \qquad x \in S, \  \forall \, J \in \M(S).$$
In the above, the integral $\int_S J(x') \, q(dx' \! \mid x, u)$ appearing in $T$ and $T_\mu$ is defined with respect to the completion of the Borel probability measure $q(\cdot \! \mid x, u)$ (see \cite[p.\ 173]{bs} and see \cite[Chap.\ 3.3]{Dud02} for the completion of measures), and the integral $\int_C \{ \cdots\} \, \mu(du \!\mid x)$ in $T_\mu$ is defined likewise. 

We clarify another detail in the definitions of $T, T_\mu$ given above, which concerns operations involving extended real-valued functions. In general, the integral $\int f d\rho$ for a Borel probability measure $\rho$ and an extended real-valued, universally measurable function $f$ is defined as $\int f d\rho = \int f_+ d\rho - \int f_- d\rho$, where $f_+, f_-$ are the positive and negative parts of $f$, respectively. We use the convention $\infty - \infty = - \infty + \infty = \infty$ in this definition; we note, however, that such arithmetic operations will not be encountered for the (GC) model in this paper. In particular, under the (GC) condition~(\ref{eq-tmodel}), it can be verified that we do not encounter $\infty - \infty$ or $-\infty + \infty$ when computing $T(J^*)$, $T_\mu(J^*)$, or more generally, when computing $T(J)$ for any $J \in A(S)$ satisfying $J(x) \geq - \sup_{\pi \in \Pi} J_\pi^-(x)$, $x \in S$. At this point, however, there is still no need to introduce a general set of such functions $J$; such a set will be formally introduced later in Section~\ref{sec4.2}.

We can state now some important properties of the (GC) model. For any policy $\pi$, $J_\pi \in \M(S)$ \cite[Chap.\ 9.1]{bs}, and for any stationary policy $\mu$, $J_\mu$ satisfies the dynamic programming equation $J_\mu = T_\mu(J_\mu)$
(see Lemma~\ref{lma-DPpolicy} in Appendix~\ref{appsec2}). Furthermore, it can be shown that the (GC) model has the following optimality properties:

\begin{restatable}[Properties of the optimal cost function]{thm}{thmoptone} \label{thm-m1} 
\hfill
\begin{itemize}
\item[(a)] The optimal cost function $J^*$ is lower semi-analytic.
\item[(b)] The optimality equation $J^*= T(J^*)$ holds.
\end{itemize}
\end{restatable}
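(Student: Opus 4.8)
The two parts are intertwined: part (b) needs the lower semi-analyticity of $J^*$ both to make sense of $T(J^*)$ and to run the measurable-selection argument, so the plan is to prove (a) first. The workhorse throughout is the pair of preservation facts for lower semi-analytic functions in the universal measurability framework \cite[Chap.\ 7]{bs}: if $J \in A(S)$ then $(x,u) \mapsto g(x,u) + \int_S J(x')\,q(dx' \!\mid x,u)$ is lower semi-analytic on $\Gamma$ (integration against a Borel kernel preserves lower semi-analyticity), and partial minimization of a lower semi-analytic function over the $u$-sections of the analytic set $\Gamma$ again yields a lower semi-analytic function of $x$. Together these give $T:A(S)\to A(S)$, and the same machinery applies to the finite-horizon cost-to-go maps.

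For part (a), the plan is to exhibit $J^*$ as a partial minimization of a lower semi-analytic function over an analytic index set of policies. Concretely, I would parametrize (randomized semi-Markov) policies by a Souslin parameter space, show that the map $(x,\pi) \mapsto J_\pi(x)$ is lower semi-analytic, and then conclude that $J^*(x) = \inf_\pi J_\pi(x)$ is lower semi-analytic by the projection/partial-minimization theorem. The signed one-stage cost is handled by the (GC) condition: writing $J_\pi = J_\pi^+ - J_\pi^-$, each term is a total cost in a genuine positive-cost model (for $g_+$ and $g_-$ respectively), so each is well-behaved, and the general-convergence condition~(\ref{eq-tmodel}) guarantees the difference never produces $\infty - \infty$ and that the finite-horizon truncations $\E^\pi_x\{\sum_{k=0}^{n}g(x_k,u_k)\}$ converge to $J_\pi(x)$ while the negative tails vanish. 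This lets me build $(x,\pi)\mapsto J_\pi(x)$ as a limit of finite-horizon maps (each lower semi-analytic by the preservation facts) with a controlled correction, and then take the infimum over the analytic policy index.

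For part (b), with $J^*\in A(S)$ in hand (and $T(J^*)$ well-defined, as already noted in the text), I would prove the two inequalities separately. For $J^* \ge T(J^*)$: fix $x$ and any $\pi=(\mu_0,\mu_1,\dots)$; the (GC) condition permits the one-step decomposition $J_\pi(x) = \int_{U(x)} \big[\,g(x,u) + \int_S J_{\pi'}(x')\,q(dx' \!\mid x,u)\big]\,\mu_0(du \!\mid x)$, where $\pi'$ denotes the (valid) continuation policy; bounding $J_{\pi'}\ge J^*$ and then minimizing over $u$ and over $\pi$ gives $J^*(x)\ge T(J^*)(x)$. For the reverse inequality $J^*\le T(J^*)$: given $\epsilon>0$, I would invoke a measurable selection theorem — available precisely because $J^*$ is lower semi-analytic — to obtain a universally measurable $\epsilon$-optimal continuation, i.e.\ a policy $\sigma$ with $J_\sigma(x')\le J^*(x')+\epsilon$ for all $x'$; composing a fixed first control $u\in U(x)$ with $\sigma$ yields a policy of cost $g(x,u)+\int_S J_\sigma(x')\,q(dx' \!\mid x,u) \le g(x,u)+\int_S J^*(x')\,q(dx' \!\mid x,u) + \epsilon$, whence $J^*(x)\le g(x,u)+\int_S J^*(x')\,q(dx' \!\mid x,u)+\epsilon$ for every $u$; taking the infimum over $u$ and letting $\epsilon\downarrow 0$ gives $J^*\le T(J^*)$.

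The main obstacle lies entirely in part (a) and in the $\le$ direction of part (b): making the parametrization of policies and the near-optimal selection \emph{universally measurable} in the signed-cost setting, where one cannot lean on monotone value-iteration limits (value iteration from zero need not converge to $J^*$ even for finite (GC) problems). The delicate points are verifying that $(x,\pi)\mapsto J_\pi(x)$ is lower semi-analytic despite the cancellation $J_\pi^+ - J_\pi^-$, and securing a universally measurable $\epsilon$-optimal policy whose existence must itself be established within the framework. I expect these to rest on the analytic-set selection theorems of \cite[Part II]{bs}, together with the finiteness of $\sup_\pi J_\pi^-$ from~(\ref{eq-tmodel}) to control the negative tails.
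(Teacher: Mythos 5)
Your part (b) is essentially the paper's argument: the paper also proves $J^*\ge T(J^*)$ by a one-step decomposition of the cost of an $\epsilon$-optimal policy for a fixed $x$ (via Lemma~\ref{lma-B}(b)), and proves $J^*\le T(J^*)$ by concatenating a measurably selected first step with a uniformly $\epsilon$-optimal policy. Two technical points you gloss over are handled in the paper: the inner integral $\int_S J_{\pi'}(x')\,q(dx'\!\mid x,u)$ for the continuation of a \emph{history-dependent} policy is not directly meaningful (the continuation depends on the whole history and its cost need not be a measurable function of $x_1$ alone), so the paper first replaces $\pi$ by a Markov policy with the same marginals (Lemma~\ref{lma-B}(a)) before bounding by $\E^\pi_x\{J^*(x_1)\}$; and the uniformly $\epsilon$-optimal universally measurable policy you invoke must itself be constructed, which the paper does before and independently of the optimality equation.

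The genuine gap is in part (a), and it sits exactly where you flag ``the main obstacle.'' Your plan is to realize $J^*(x)=\inf_\pi J_\pi(x)$ as a partial minimization of a lower semi-analytic function of $(x,\pi)$ over an analytic index set of policies, but you never say what that index set is, and the obvious candidate fails: a policy is a sequence of \emph{universally measurable} stochastic kernels, and the collection of such objects is not naturally an analytic subset of any Polish space, so the projection theorem \cite[Prop.\ 7.47]{bs} cannot be applied to it. The paper's resolution (following \cite[Chap.\ 9.2]{bs}) is to not parametrize policies at all, but to pass to the associated deterministic model on $\P(S)$ with controls in $\P(S\times C)$: a policy is replaced by the sequence $(\q_0,\q_1,\ldots)$ of joint marginal distributions of $(x_k,u_k)$ it induces, the set $\Delta_{p_0}$ of admissible such sequences is shown to be an analytic subset of the Polish space $\big(\P(S\times C)\big)^\infty$, the total cost becomes the lower semi-analytic function $G(p_0,\q_0,\q_1,\ldots)=\sum_k\bar g_+(\q_k)-\sum_k\bar g_-(\q_k)$, and then $\bar J^*(p_0)=\inf_{(\q_0,\q_1,\ldots)\in\Delta_{p_0}}G(\cdots)$ is lower semi-analytic by partial minimization; finally $J^*(x)=\bar J^*(\delta_x)$ is lower semi-analytic as the composition with the Borel map $x\mapsto\delta_x$. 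Without this (or an equivalent) reduction, the partial-minimization step in your outline has nothing analytic to minimize over, so as written the proof of (a) does not go through. Your handling of the signed cost via $J_\pi^+-J_\pi^-$ and condition~(\ref{eq-tmodel}) is the right idea and matches how the paper treats $G$ with the convention $\infty-\infty=\infty$ rendered harmless by finiteness of the negative part.
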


\begin{restatable}[Properties of optimal policies]{thm}{thmopttwo} \label{thm-m2} 
\hfill
\begin{itemize}
\item[(a)] For any $\epsilon > 0$, there exists an $\epsilon$-optimal nonrandomized semi-Markov policy.
\item[(b)] If for each state an optimal policy exists, then there exists an optimal randomized semi-Markov policy.
\item[(c)] If $J^* \geq 0$, then there exists an $\epsilon$-optimal nonrandomized Markov policy; if in addition, for each state an optimal policy exists, then there exists an optimal nonrandomized stationary policy. Furthermore, if $J^* \geq 0$, a stationary policy $\mu$ is optimal if and only if $J^* = T_\mu(J^*)$.
\end{itemize}
\end{restatable}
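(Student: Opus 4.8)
All three parts rest on a common toolkit: the optimality equation $J^* = T(J^*)$ and the lower semi-analyticity of $J^*$ from Theorem~\ref{thm-m1}, the measurable selection theorems for minimizing lower semi-analytic functions over the analytic graph $\Gamma$ \cite[Props.\ 7.49, 7.50]{bs}, the fact that under (GC) the total cost $J_\pi(x)$ depends on $\pi$ only through the marginal laws of the pairs $(x_k,u_k)$, and the vanishing negative tail $\E^\pi_x\{\sum_{k\ge n} g_-(x_k,u_k)\}\to 0$ guaranteed by $J_\pi^- < \infty$. I would treat (c) first, then (b), and leave (a) for last since it is the hardest.

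For (c), assume $J^*\ge 0$, and take the equivalence ``$\mu$ optimal $\iff J^*=T_\mu(J^*)$'' as the anchor. The forward direction is immediate from the fixed-point identity $J_\mu=T_\mu(J_\mu)$: if $J_\mu=J^*$ then $J^*=J_\mu=T_\mu(J_\mu)=T_\mu(J^*)$. For the converse, iterate to get $J^*(x)=T_\mu^n(J^*)(x)=\E^\mu_x\{\sum_{k=0}^{n-1}g(x_k,u_k)+J^*(x_n)\}$; since $J^*\ge 0$ the terminal term is nonnegative, so $J^*(x)\ge \E^\mu_x\{\sum_{k=0}^{n-1}g(x_k,u_k)\}$, and letting $n\to\infty$ yields $J^*\ge J_\mu$, hence $J_\mu=J^*$. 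This is precisely where the sign assumption is used. The $\epsilon$-optimal nonrandomized Markov policy then follows from the per-stage construction: select nonrandomized stationary $\mu_j$ with $T_{\mu_j}(J^*)\le J^*+\epsilon_j$, $\sum_j\epsilon_j\le\epsilon$, set $\pi=(\mu_0,\mu_1,\dots)$, and telescope to $\E^\pi_x\{\sum_{k<n}g+J^*(x_n)\}\le J^*(x)+\epsilon$; because $J^*\ge 0$ the terminal term may be dropped, giving $J_\pi\le J^*+\epsilon$. Finally, when optimal policies exist, unfolding one step of an optimal $\pi=(\mu_0,\dots)$ at $x$ forces $\int_{U(x)}\{g(x,u)+\int_S J^*(x')\,q(dx'\!\mid x,u)\}\,\mu_0(du\mid x)=J^*(x)=T(J^*)(x)$, so the infimum in $T(J^*)(x)$ is attained; selecting exact minimizers gives a nonrandomized stationary $\mu$ with $T_\mu(J^*)=J^*$, which is optimal by the equivalence just proved.

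For (b), assume an optimal policy exists at every state, and for each $x$ fix an optimal $\pi^x$ with $J_{\pi^x}(x)=J^*(x)$, chosen universally measurably in $x$ via a selection theorem applied to the analytic set of state/optimal-policy pairs. I would then invoke the semi-Markov reduction: define $\bar\pi$ by letting $\bar\mu_k(du_k\mid x_0,x_k)$ be the conditional law of the $k$-th control given the current state under $\pi^{x_0}$ started at $x_0$. Then, started at $x$, $\bar\pi$ reproduces the marginal laws of all pairs $(x_k,u_k)$ produced by $\pi^x$, and since $J_\pi$ depends only on these marginals, $J_{\bar\pi}(x)=J_{\pi^x}(x)=J^*(x)$ for every $x$; thus $\bar\pi$ is optimal, randomized, and semi-Markov. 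Randomization is intrinsic here because the conditioning that defines $\bar\mu_k$ averages over the histories leading to $x_k$.

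Part (a) is the crux, and the difficulty is entirely the absence of a sign constraint: in the telescoped bound $\E^\pi_x\{\sum_{k<n}g+J^*(x_n)\}\le J^*(x)+\epsilon$ the terminal term $\E^\pi_x\{J^*(x_n)\}$ may now be negative and cannot be discarded, so the clean (c)-style Markov argument breaks down. The plan is to neutralize this term using the (GC) condition, making $\E^\pi_x\{J^*(x_n)\}$ asymptotically nonnegative by combining the vanishing negative tail with an allocation of the error budget $\{\epsilon_k\}$ that depends on the initial state $x_0$ — and it is exactly this $x_0$-dependence of the budget, and hence of the selected controls, that forces the \emph{semi-Markov} rather than Markov form. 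An alternative route is to pick for each $x$ a nonrandomized $\epsilon$-optimal $\pi^x$ (randomization does not lower the infimum over $\Pi$) and pass to semi-Markov form as in (b), the extra work then being de-randomization, since the marginal-preserving reduction a priori produces randomized kernels. I expect the main obstacle to be precisely the control of the signed terminal term $\E^\pi_x\{J^*(x_n)\}$ \emph{simultaneously} for all initial states while retaining both the nonrandomized and the semi-Markov structure; this is where the universal-measurability selection machinery and the finiteness of the negative parts must be used in tandem, which is why the full argument belongs in the appendix.
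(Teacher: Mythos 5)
Your treatment of (c) is essentially the paper's proof: the same per-stage selection with a summable error budget and the use of $J^*\ge 0$ to discard the terminal term, the same attainment-of-the-infimum argument for the optimal stationary policy, and the same two directions of the equivalence $J^*=T_\mu(J^*)$ (the converse being your ``drop the nonnegative terminal term'' step, which the paper writes as $J_\mu=\lim_n T_\mu^n(\0)\le\lim_n T_\mu^n(J^*)=J^*$). Part (b) is also the paper's route in spirit, but the phrase ``selection theorem applied to the analytic set of state/optimal-policy pairs'' needs to be made precise: the space of policies is not a Borel space on which one can select directly. The paper's device is the deterministic model (DM) of Appendix~\ref{appendix1}, in which a policy is replaced by its sequence of marginal distributions $(\q_0,\q_1,\ldots)\in\big(\P(S\times C)\big)^\infty$; the set of admissible pairs is analytic, the cost $G$ is lower semi-analytic, \cite[Prop.\ 7.50(b)]{bs} selects a measurable exact minimizer on the set where the infimum is attained, and the decomposition of each $\q_k$ into a kernel $\bar\mu_k(du\mid x_0,x_k)$ is exactly your conditional-law construction.

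The genuine gap is in (a), which you correctly identify as the crux but do not prove; neither of your two plans contains the mechanism the paper actually uses. Making $\E^\pi_x\{J^*(x_n)\}$ ``asymptotically nonnegative'' is not the right target: even if that term could be dropped, you would still need the tail cost $\E^\pi_x\big\{\sum_{k\ge n}g(x_k,u_k)\big\}$ of the constructed policy to be small, and under (GC) only the negative part of the tail is automatically controlled --- the positive part is not, unless the continuation policy is chosen for that purpose. The paper's construction (Appendix~\ref{appsec-prf-thm2a}) anchors everything on the convergence of value iteration from above: since $T^k(J^{*+})\downarrow J^*$ (Theorem~\ref{thm-vi}), the sets $A_k=\{x\mid T^k(J^{*+})(x)\le J^*(x)+\epsilon/4\}$ increase to $S$; for $x_0\in A_k\setminus A_{k-1}$ one runs a nonrandomized Markov policy that is $\epsilon/4$-optimal for the $k$-stage problem with terminal cost $J^{*+}$, and then \emph{switches to a nonrandomized $\epsilon/2$-optimal Markov policy for the positive-cost problem with one-stage cost $g_+$}, so that the tail is bounded by $\E^\pi_x\{J^{*+}(x_k)\}+\epsilon/2$ and absorbed into the finite-horizon bound. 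The state-dependent horizon $k(x_0)$ is indeed what forces the semi-Markov form, confirming your intuition, but the ingredients you are missing are (i) the switch to the $g_+$ problem to control the positive tail, and (ii) the logical dependence of this part of Theorem~\ref{thm-m2}(a) on Theorem~\ref{thm-vi}, which the paper flags explicitly and which is non-circular because Theorem~\ref{thm-vi} uses only $J^*=T(J^*)$. Your alternative route --- de-randomizing the semi-Markov policy produced by the marginal-preserving reduction --- is not carried out and does not obviously work, since stage-wise de-randomization destroys the marginals on which the cost identity rests.
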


The preceding theorems largely follow from the same analyses given in the paper \cite{ShrB79} and the book \cite[Chap.\ 9.2-9.3]{bs} for positive or negative costs models. We will outline the proofs in Appendix~\ref{appendix1}.

For problems with a discrete state space, proofs of Theorems~\ref{thm-m1}(b),~\ref{thm-m2}(a), as well as many other optimality results for (GC),  can be found in the survey on total reward problems by Feinberg \cite[Chap.\ 6]{Fein02}.\footnote{The survey~\cite{Fein02} focuses on GC MDP in which the state space $S$ is countable, but the control space $C$ can be an arbitrary measurable space.} (For the special positive and negative models, see also the books~\cite[Chap.\ 9]{bs} and \cite[Chap.\ 7]{puterman94}.) 

We note that the existence of a \emph{nonrandomized} $\epsilon$-optimal semi-Markov policy, stated in Theorem~\ref{thm-m2}(a), will be proved using a convergence property of value iteration in (GC) (cf.\ the discussion at the end of Section~\ref{sec3}; see also the proof given in Appendix~\ref{appsec-prf-thm2a}). This is the only property, among the ones given by the preceding theorems, whose proof depends on the properties of value iteration. Our subsequent convergence analysis for value iteration will use the optimality equation $J^* = T(J^*)$ [Theorem~\ref{thm-m1}(b)], so there is no circular reasoning involved.

The properties given in Theorem~\ref{thm-m2}(c) are known to hold for the positive model (P)~\cite[Props.\ 9.12, 9.19]{bs}, so Theorem~\ref{thm-m2}(c) shows that the (GC) model is similar to (P) in some aspects when $J^* \geq 0$. In Section~\ref{sec4.1}, we will give more results about value iteration in the case $J^* \geq 0$, which will show further connections between the two models. We also note that the last statement in Theorem~\ref{thm-m2}(c) is implied by a more general statement in Sch{$\ddot{\mathrm{a}}$}l~\cite[Theorem 5.2.2]{Schal75}, which does not require $J^* \geq 0$. 
           
\subsection{The Question about Value Iteration} \label{sec2.3}
              
In the rest of this paper we will focus on value iteration, i.e., the fixed point iteration $T^n(J)$ with the dynamic programming operator $T$, for some initial function $J \in A(S)$. The question of our interest is: for the (GC) model, under what conditions does $T^n(J)$ converge to $J^*$?  The mode of convergence we consider is pointwise convergence. For a sequence of functions $f_n$, we shall consider the functions $\limsup_{n \to \infty} f_n$, $\liminf_{n \to \infty} f_n$, where the limsup and liminf are defined pointwise. When these two functions are equal, they define the pointwise limit function $\lim_{n \to \infty} f_n$.
We write $f_n \to f$ or $\lim_{n \to \infty} f_n = f$ if $f_n$ converges pointwise to a function $f$, and we write $f_n \uparrow f$ or $f_n \downarrow f$ if the convergence is monotonically from below or from above, respectively. 

A natural function from which to start value iteration is the constant function zero, which we denote by $\0$. It can be a difficult starting point, however, as there exist many counterexamples with $T^n(\0)\not\to J^*$ (see e.g., \cite{Fein02}) and as our analysis in Section~\ref{sec4} will also show. 
An initial function greater than $J^*$ can be easier for value iteration to converge. There is also evidence that in certain problems (such as deterministic shortest path problems), value iteration can converge significantly faster from above.
We will start our analysis with such initial functions in the next section.

In this paper we focus on deriving convergence results for the (GC) model, without making further structural assumptions on the model. There are several classes of total cost MDP where additional structures can be exploited, and where the convergence of value iteration has been largely established. A major class of such problems is the one in which $T$ can be shown to be a contraction operator on a certain complete metric space and the convergence of value iteration can be established with Banach's contraction principle. This class includes certain discounted problems as well as undiscounted problems with ``transience structures'' (see e.g., \cite[Chaps.~8 and 9.6]{HL99}). Semicontinuous models cover another broad class of problems for which value iteration has been relatively well studied (see e.g., the early work \cite[Secs.\ 13-16]{Schal75} and the recent papers \cite[Sec.\ 4]{FKZ12}, \cite{FKZ14}), and for arbitrarily signed one-stage costs, the convergence of $T^n(\0)$ has been shown under some additional conditions on the costs of policies (see e.g., \cite[Condition (C)]{Schal75}). The results of this paper for the general (GC) model apply to semicontinuous models as well and can be useful as supplements to the existing results on value iteration developed by those early works just mentioned.

\section{Convergence of Value Iteration from Above} \label{sec3}
\markboth{\rm \S \ref{sec3}. Convergence of Value Iteration from Above}{\rm \S \ref{sec3}. Convergence of Value Iteration from Above}

In this section we prove a convergence theorem for value iteration starting from a function $J \geq J^*$. 
Let $J^{*+}$ (resp.\ $J^{*-}$) be the optimal cost function if $g_+$ (resp.\ $g_-$) is the one-stage cost function:
$$  J^{*+}(x): = \inf_{\pi \in \Pi} J_\pi^+(x), \qquad J^{*-}(x): = \inf_{\pi \in \Pi} J_\pi^{-}(x), \qquad x \in S.$$
Both functions are nonnegative, and $J^{*+}$ is extended real-valued, whereas $J^{*-}$ is real-valued under the model condition (\ref{eq-tmodel}). Clearly $J^{*+} \geq J^{*+} - J^{*-}  \geq J^*$.

\begin{thm}[Convergence of value iteration from above] \label{thm-vi}
For any function $J \in A(S)$ satisfying $J^* \leq J \leq c \, \big(J^{*+} + J^{*-} \big)$ for some $c \geq 1$, we have $T^n (J) \to J^*$.
\end{thm}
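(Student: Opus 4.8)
The plan is to bracket $T^n(J)$ between $J^*$ and a sequence that descends to $J^*$. The lower bracket is immediate: monotonicity of $T$ together with the optimality equation $J^*=T(J^*)$ (Theorem~\ref{thm-m1}(b)) gives $T^n(J) \ge T^n(J^*) = J^*$ for all $n$, since $J \ge J^*$. In particular the conclusion holds trivially at any state $x$ with $J^*(x)=+\infty$, where $T^n(J)(x)\equiv +\infty$. So the whole problem reduces to proving the upper bound $\limsup_n T^n(J)(x) \le J^*(x)$ at each $x$ with $J^*(x)<\infty$. I anticipate that the multiplier $c$ and the precise envelope $J^{*+}+J^{*-}$ enter only through the pointwise bound $J \le c\,(J^{*+}+J^{*-})$ on the \emph{terminal} value, and that this envelope is exactly what makes the terminal contribution vanish.

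First I would pass to the finite-horizon representation of value iteration. Since $J \ge J^* \ge -\sup_{\pi} J_\pi^-$ is bounded below by a real-valued function, no $\infty-\infty$ arises in the iterates, and for every policy $\pi$,
$$ T^n(J)(x) \ \le\ \E^\pi_x\!\left\{ \sum_{k=0}^{n-1} g(x_k,u_k) + J(x_n) \right\}, $$
which follows by unrolling $T^n(J)=T(T^{n-1}(J))$ and replacing each infimum by the action prescribed by $\pi$ (the iterates stay lower semi-analytic, so the composition is legitimate; cf.\ \cite[Chap.~9]{bs}). Then, given $\epsilon>0$, I would fix a policy $\pi$ with $J_\pi(x) \le J^*(x)+\epsilon$. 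The key dividend of near-optimality is finiteness of the positive part: because $J_\pi^-(x)<\infty$ by~(\ref{eq-tmodel}) and $J_\pi(x)=J_\pi^+(x)-J_\pi^-(x)<\infty$, we also have $J_\pi^+(x)=J_\pi(x)+J_\pi^-(x)<\infty$. As $n\to\infty$ the running cost converges, $\E^\pi_x\{\sum_{k=0}^{n-1} g(x_k,u_k)\} \to J_\pi(x) \le J^*(x)+\epsilon$, so everything reduces to showing $\limsup_n \E^\pi_x\{J(x_n)\} \le 0$.

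To control the terminal term I would invoke the principle of optimality along the trajectory generated by $\pi$. The continuation of $\pi$ after a history $(x_0,\dots,x_n)$ is itself a policy, so for almost every trajectory
$$ J^{*+}(x_n) \le \E^\pi_x\!\Big\{ \textstyle\sum_{k\ge n} g_+(x_k,u_k) \,\Big|\, x_0,\dots,x_n \Big\}, \qquad J^{*-}(x_n) \le \E^\pi_x\!\Big\{ \textstyle\sum_{k\ge n} g_-(x_k,u_k) \,\Big|\, x_0,\dots,x_n \Big\}. $$
Chaining these with $J \le c\,(J^{*+}+J^{*-})$ bounds the measurable random variable $J(x_n)$, almost surely and from above, by $c\,\E^\pi_x\{\sum_{k\ge n}(g_++g_-) \mid x_0,\dots,x_n\}$; taking expectations gives $\E^\pi_x\{[J(x_n)]_+\} \le c\,\E^\pi_x\{\sum_{k\ge n}(g_++g_-)(x_k,u_k)\}$. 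Since $J_\pi^+(x)<\infty$ and $J_\pi^-(x)<\infty$, monotone convergence drives both infinite tails to $0$, whence $\limsup_n \E^\pi_x\{J(x_n)\} \le \limsup_n \E^\pi_x\{[J(x_n)]_+\} = 0$. Letting $\epsilon\downarrow 0$ finishes the upper bound, and with the lower bracket this gives $T^n(J)(x)\to J^*(x)$.

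The convergence limits above are routine; the hard part will be the measure-theoretic bookkeeping in the Borel/universal-measurability setting. Concretely, one must justify the finite-horizon inequality while keeping the iterates lower semi-analytic, and must realize the ``continuation policy'' as a genuine universally measurable policy carried by a regular conditional distribution of the induced process, so that the conditional principle-of-optimality inequalities hold almost surely. A reassuring feature is that the argument uses only pointwise almost-sure inequalities between the \emph{measurable} quantities $J(x_n)$ and the conditional tails, so it never needs $J^{*-}$ itself to be measurable --- which is fortunate, since $g_-=\max\{-g,0\}$ need not be lower semi-analytic even though $g_+$ is.
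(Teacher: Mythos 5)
Your proposal is correct and follows essentially the same route as the paper's proof: reduce to the upper bound $\limsup_n T^n(J)(x)\le J^*(x)$ at states with $J^*(x)<\infty$, fix an $\epsilon$-optimal policy $\pi$ for $x$, unroll $T^n(J)(x)$ into the $n$-stage cost of $\pi$ plus the terminal term $\E^\pi_x\{J(x_n)\}$, and kill the terminal term using $J\le c\,(J^{*+}+J^{*-})$ together with the vanishing of the tails of $J^+_\pi(x)$ and $J^-_\pi(x)$ (the paper's Lemma~\ref{lma-1}). The one point where you genuinely diverge is the treatment of the possible non-measurability of $J^{*-}$: the paper first passes to a Markov policy with the same marginals (Lemma~\ref{lma-B}(a)), and then routes the bound through the \emph{outer} integral $\oE^\pi_{n,x}\{J^{*-}(x_n)\}$ via Lemma~\ref{lma-B}(b), whereas you chain almost-sure conditional principle-of-optimality inequalities so that only the measurable quantities $[J(x_n)]_+$ and the conditional tail sums ever get integrated, sidestepping outer integrals entirely. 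Both devices work; yours is arguably more self-contained at this spot but shifts the burden onto justifying the a.s.\ conditional inequalities for a general history-dependent policy (conditioning on the full history $(x_0,u_0,\dots,x_n)$ rather than on states alone is the cleaner way to realize the continuation policy), which is exactly the content the paper packages into Lemma~\ref{lma-B} and its appendix proof.
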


Theorem~\ref{thm-vi} extends a theorem of van der Wal, which states that $T^n(J) \to J^*$ if $J^* \leq J \leq J^{*+}$, in particular, $T^n(J^{*+}) \to J^*$ \cite[Theorem 3.7]{vdW81} (see also \cite[Theorem 6.8]{Fein02}). It can be seen from the definition of $T$ and $J^{*+}$ that when starting from $J^{*+}$, the convergence is monotonic: $T^n(J^{*+}) \downarrow J^*$. If the function $J^{*+} - J^{*-} \in A(S)$, then since $J^{*+} - J^{*-} \geq J^*$, it also satisfies the convergence condition and as can be easily verified, the convergence is also monotonic: 
$T^n\big(J^{*+} - J^{*-}\big) \downarrow J^*$.

Theorem~\ref{thm-vi} is also an extension of \cite[Theorem 5.1]{YuB-mvipi} for the positive model (P), which states that for (P), $T^n(J) \to J^*$ if $J^* \leq J \leq c J^{*}$ for some $c \geq 1$. This type of initial condition on value iteration resembles Whittle's bridging condition \cite{Whit79}.

To prove Theorem~\ref{thm-vi}, we will use arguments similar to those used in the proofs given in \cite[Appendix E]{YuB-mvipi} for the model (P). The line of analysis was motivated by the work of Meyn~\cite[Chap.~9]{Mey08}, who also systematically used the set of functions that are bounded by a multiple of the optimal cost function, to analyze average cost and total cost problems.

We will need the following two lemmas in the proof. Lemma~\ref{lma-B} states some basic properties of the (GC) model, and 
Lemma~\ref{lma-1} leads to the desired result. 
A subtlety here is that while the function $J^{*+}$ is lower semi-analytic and therefore universally measurable \cite[Chap.\ 7.7]{bs}, the function $J^{*-}$ is not necessarily universally measurable.
\footnote{Instead of being lower semi-analytic, the function $g_- = \max\{ - g, 0\}$ is upper semi-analytic (i.e., $- g_-$ is lower semi-analytic). For this reason, $J^{*-}$ may not be universally measurable (cf.\ \cite[Example (48), p.\ 940]{BFO74}).  If $g$ is Borel measurable, then $g_-$ would be lower semi-analytic and in turn, being the optimal cost function of the total cost problem with one-stage cost function $g_-$,  $J^{*-}$ would be lower semi-analytic and hence universally measurable.}
In the proof, we will need the outer integral of $J^{*-}$, which we introduce first.

For any nonnegative function $J$ on $S$ (possibly $J \not\in \M(S)$, i.e., non-measurable), let $\oE^{\pi}_{n,x} \big\{ J(x_n) \big\}$ denote the outer integral of $J$ with respect to the marginal distribution of $x_n$ induced by the policy $\pi$ and the initial state $x_0=x$; i.e.,
\begin{equation} \label{eq-outerint1}
  \oE^{\pi}_{n,x} \big\{ J(x_n) \big\} : = \inf \Big\{ \, \E^{\pi}_{x} \big\{ f(x_n) \big\} \, \Big| \,  J \leq f, \  f  \in \M(S) \Big\}.
\end{equation}  
By a property of the outer integral \cite[Lemma A.2]{bs}, if $J_1, J_2 \geq 0$ and $J_1 \in \M(S)$, then
\begin{equation} \label{eq-outerint2}
  \oE^{\pi}_{n,x} \big\{ J_1(x_n) + J_2(x_n) \big\}  =  \E^{\pi}_{x} \big\{ J_1(x_n) \big\}  + \oE^{\pi}_{n,x} \big\{ J_2(x_n) \big\}.
\end{equation}

\begin{restatable}[Two basic properties]{lem}{lmaB} 
\label{lma-B} \hfill
\begin{itemize}
\item[(a)] For any given state $x$ and policy $\pi$, there exists a Markov policy $\pi'$ with 
$$ J_{\pi'}^+(x)=J_{\pi}^+(x), \qquad J_{\pi'}^-(x)=J_{\pi}^-(x), \qquad J_{\pi'}(x) = J_\pi(x).$$
\item[(b)] For any state $x$, policy $\pi$, and $n \geq 0$, 
$$\E^\pi_x \left\{ \sum_{k=n}^\infty g(x_k, u_k) \right\} \geq \E^\pi_x \left\{ J^*(x_n) \right\}, \qquad  \E^\pi_x \left\{ \sum_{k=n}^\infty g_-(x_k, u_k) \right\} \geq \oE^\pi_{n,x} \left\{ J^{*-}(x_n) \right\}.$$
\end{itemize}
\end{restatable}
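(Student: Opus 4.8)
The plan is to prove both parts by identifying the conditional law of the process, given the state at a fixed time, with the law induced by some policy launched from that state, and---in part~(b)---to absorb the non-measurability of $J^{*-}$ into the outer integral.

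For part~(a) I would use the classical construction of an equivalent Markov policy at a fixed initial state. Fixing $x$ and $\pi$, let $P_k$ be the marginal joint law of $(x_k,u_k)$ under $\E^\pi_x$. Disintegrating $P_k$ over its $S$-marginal on the Borel space $S\times C$ yields a universally measurable kernel $\mu_k'(du\mid x_k)$, the conditional law of $u_k$ given $x_k$; since $u_k\in U(x_k)$ holds $\E^\pi_x$-a.s., $\mu_k'(U(x_k)\mid x_k)=1$ off a $P_k$-null set, where I would redefine $\mu_k'$ by an arbitrary feasible selection (available by Jankov--von Neumann) so that $\pi'=(\mu_0',\mu_1',\dots)$ is a bona fide Markov policy. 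An induction on $k$ then shows $(x_k,u_k)\sim P_k$ under $\E^{\pi'}_x$ as well: the marginal of $x_{k+1}$ depends on the law of $(x_k,u_k)$ only through $q$, and gluing on the conditional $\mu_{k+1}'$ reproduces $P_{k+1}$. As $J_\pi^+$, $J_\pi^-$ and $J_\pi$ are sums over $k$ of integrals of single-stage functions against $P_k$, the three equalities follow.

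For part~(b) I would condition on $x_n$ rather than on the full history. The regular conditional distribution of $(x_n,u_n,x_{n+1},\dots)$ given $x_n=y$ exists for $\rho$-a.e.\ $y$ (where $\rho$ is the marginal law of $x_n$), and it is again the law induced from $y$ by some policy $\bar\pi^y$: transitions are governed by $q$ irrespective of the conditioning, while the conditional control laws define universally measurable, feasible kernels. Writing $\phi(y)=\E^\pi_x\big[\sum_{k\ge n}g(x_k,u_k)\mid x_n=y\big]$, this identification gives $\phi(y)=J_{\bar\pi^y}(y)\ge J^*(y)$ for $\rho$-a.e.\ $y$. Because $J^*$ is lower semi-analytic, hence universally measurable by Theorem~\ref{thm-m1}(a), integrating against $\rho$ yields
\begin{equation*}
\E^\pi_x\!\left\{\sum_{k=n}^\infty g(x_k,u_k)\right\}=\int_S \phi\,d\rho\ \ge\ \int_S J^*\,d\rho=\E^\pi_x\{J^*(x_n)\},
\end{equation*}
which is the first inequality; the (GC) condition~(\ref{eq-tmodel}) is what rules out an undefined $\infty-\infty$ in these tail sums and conditional expectations.

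The second inequality runs identically with $g_-$ and $J^{*-}$, giving $\psi(y):=\E^\pi_x[\sum_{k\ge n}g_-\mid x_n=y]=J^-_{\bar\pi^y}(y)\ge J^{*-}(y)$ for $\rho$-a.e.\ $y$. The step I expect to be the main obstacle is that $J^{*-}$ need not be universally measurable, so $\E^\pi_x\{J^{*-}(x_n)\}$ is meaningless and must be replaced by $\oE^\pi_{n,x}$. To handle this I would exploit that the exceptional set $\{y:\psi(y)<J^{*-}(y)\}$ is contained in a measurable $\rho$-null set $N$ (namely the set where the regular conditional distribution fails to exist), and set $f:=\psi$ on $S\setminus N$ and $f:=+\infty$ on $N$. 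Then $f\in\M(S)$, $f\ge J^{*-}$ everywhere, and $\E^\pi_x\{f(x_n)\}=\int_S\psi\,d\rho=\E^\pi_x\{\sum_{k\ge n}g_-\}$; since by definition~(\ref{eq-outerint1}) the outer integral is the infimum of $\E^\pi_x\{f(x_n)\}$ over all such majorants $f$, we obtain $\E^\pi_x\{\sum_{k\ge n}g_-\}\ge\oE^\pi_{n,x}\{J^{*-}(x_n)\}$, as required. The care needed in the framework---verifying the conditional-law-as-a-policy identification and the measurability of the chosen versions of $\phi,\psi$---is where the real work lies.
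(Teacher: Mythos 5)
Your part (a) is essentially the paper's own argument: both rest on the marginal-matching construction of \cite[Prop.\ 8.1]{bs}, disintegrating the joint law of $(x_k,u_k)$ over $x_k$ to obtain Markov kernels that reproduce every marginal, whence the three cost equalities.

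For part (b) you take a genuinely different route, and it is worth recording the contrast. The paper never conditions on $x_n$ under the original history-dependent $\pi$: it first replaces $\pi$ by the Markov policy $\pi'$ of part (a), which changes neither side of either inequality because both sides depend only on the marginal laws of $(x_k,u_k)$ and of $x_n$. For a Markov policy the tail cost from time $n$ is, by the Markov property, exactly $J_{\pi'_n}(x_n)$ (resp.\ $J^{-}_{\pi'_n}(x_n)$) for the shifted policy $\pi'_n=(\mu'_n,\mu'_{n+1},\dots)$; since $J^{-}_{\pi'_n}$ is a universally measurable function that majorizes $J^{*-}$ \emph{everywhere}, the outer-integral bound drops out of definition~(\ref{eq-outerint1}) with no exceptional-set bookkeeping. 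You instead disintegrate the post-$n$ process given $x_n=y$ directly under $\pi$, identify the conditional law with a policy $\bar\pi^y$ launched from $y$, and then patch an a.e.-majorant of $J^{*-}$ to $+\infty$ on a null set. This is correct, but it is where all the delicacy concentrates: you must check that the conditional control kernels, obtained by averaging $\mu_k$ over the conditional law of the pre-$n$ history, are universally measurable and feasible off a $\rho$-null set, and you must choose the version of $\psi$ carefully --- a conditional-expectation version is a priori only measurable for the $\rho$-completion, so before declaring $f\in\M(S)$ you should replace $\psi$ by a Borel version off a further Borel $\rho$-null set and absorb that set into $N$ as well (your $N$ should also contain the points where feasibility of $\bar\pi^y$ or the identification $\psi(y)=J^{-}_{\bar\pi^y}(y)$ fails, not only where the regular conditional distribution fails to exist). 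The paper's detour through part (a) buys precisely the disappearance of these issues, since $J^{-}_{\pi'_n}\geq J^{*-}$ holds pointwise rather than $\rho$-a.e.; your route buys a proof of (b) that does not lean on (a).
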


We give the proof of the preceding lemma in Appendix~\ref{appendix2}.

\begin{lem} \label{lma-1}
For a given $x \in S$, if $\pi$ is a policy with $J_\pi(x) < + \infty$, then 
$$\lim_{n \to \infty} \, \E^{\pi}_x \big\{ J^{*+}(x_n) \big\} = 0, \qquad \lim_{n \to \infty} \, \oE^{\pi}_{n,x} \big\{ J^{*-}(x_n) \big\} = 0.$$
\end{lem}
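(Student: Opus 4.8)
The plan is to exploit the fact that $J_\pi(x) < +\infty$ forces the tail sums of the one-stage costs to vanish, and then to bound the quantities of interest by those tail sums using Lemma~\ref{lma-B}(b). I would begin by fixing the state $x$ and the policy $\pi$ with $J_\pi(x) < +\infty$. Since $J_\pi(x) = J_\pi^+(x) - J_\pi^-(x)$ and $J_\pi^-(x)$ is finite by the (GC) condition~(\ref{eq-tmodel}), finiteness of $J_\pi(x)$ is equivalent to $J_\pi^+(x) < \infty$. The key observation is that both $\E^\pi_x\{\sum_{k=0}^\infty g_+(x_k,u_k)\}$ and $\E^\pi_x\{\sum_{k=0}^\infty g_-(x_k,u_k)\}$ are finite, so by the convergence of these series, the expected tail sums satisfy
$$
\lim_{n \to \infty} \E^\pi_x\left\{ \sum_{k=n}^\infty g_+(x_k,u_k) \right\} = 0, \qquad
\lim_{n \to \infty} \E^\pi_x\left\{ \sum_{k=n}^\infty g_-(x_k,u_k) \right\} = 0.
$$
This follows because for a series of nonnegative terms with finite total sum, the tails go to zero, and the expectation commutes with the tail via monotone convergence.

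Next I would connect these vanishing tail sums to $\E^\pi_x\{J^{*+}(x_n)\}$ and $\oE^\pi_{n,x}\{J^{*-}(x_n)\}$. For the negative part, Lemma~\ref{lma-B}(b) directly gives
$$
\E^\pi_x\left\{ \sum_{k=n}^\infty g_-(x_k,u_k) \right\} \geq \oE^\pi_{n,x}\left\{ J^{*-}(x_n) \right\} \geq 0,
$$
so the second limit follows immediately by squeezing. For the positive part, I would argue analogously: one expects a bound of the form $\E^\pi_x\{\sum_{k=n}^\infty g_+(x_k,u_k)\} \geq \E^\pi_x\{J^{*+}(x_n)\}$, obtained by the same reasoning as in Lemma~\ref{lma-B}(b) but applied to the auxiliary MDP whose one-stage cost is $g_+$ (a positive costs model). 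The intuition is that, conditioned on reaching $x_n$, the remaining expected positive cost under the shifted policy is at least the optimal positive cost $J^{*+}(x_n)$, and then one averages over the distribution of $x_n$; the measurability here is cleaner than for $J^{*-}$ because $J^{*+}$ is lower semi-analytic, hence universally measurable, so no outer integral is needed.

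The main obstacle I anticipate is the measurability and conditioning argument establishing the tail-sum lower bounds, particularly the careful handling of the outer integral $\oE^\pi_{n,x}$ for the non-measurable $J^{*-}$. The delicate point is justifying that the expected remaining cost from time $n$ onward dominates the optimal cost-to-go evaluated at $x_n$; this requires constructing, from the ``tail'' of $\pi$ after history up to $x_n$, a family of policies whose costs bound $J^{*-}(x_n)$ from above, and then integrating (or outer-integrating) over the marginal law of $x_n$. Since these lower bounds are precisely the content of Lemma~\ref{lma-B}(b), I would simply invoke that lemma for the $g_-$ inequality and adapt its proof—or note the analogous positive-cost version—for the $g_+$ inequality, so that the present lemma reduces to the elementary tail-vanishing observation combined with a nonnegative squeeze. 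The remaining steps are routine.
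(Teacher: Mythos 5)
Your proposal is correct and follows essentially the same route as the paper: both establish $J_\pi^+(x) < \infty$ from the (GC) condition, bound $\E^\pi_x\{J^{*+}(x_n)\}$ by the expected tail sum of $g_+$ via Lemma~\ref{lma-B}(b) applied to the auxiliary problem with one-stage cost $g_+$, use the second inequality of Lemma~\ref{lma-B}(b) directly for the outer integral of $J^{*-}$, and conclude by letting the tails vanish. The paper phrases the last step as the partial sums increasing to the finite total rather than the tails decreasing to zero, but this is the same argument.
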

\begin{proof}
Since $J_\pi(x) = J^{+}_\pi(x) - J^{-}_\pi(x) < \infty$ by assumption and $J_\pi^{-}(x) < \infty$ by the model condition (\ref{eq-tmodel}), we have $J_\pi^+(x) < \infty$. 
For any $n \geq 0$,
\begin{align*}
  J_\pi^+(x) & =  \E^\pi_x \left\{ \sum_{k=0}^{n-1} g_+(x_k, u_k) \right\} + \E^\pi_x \left\{ \sum_{k=n}^{\infty} g_+(x_k, u_k) \right\} \\
     & \geq  \E^\pi_x \left\{ \sum_{k=0}^{n-1} g_+(x_k, u_k) \right\} +  \E^\pi_x \big\{ J^{*+}(x_n) \big\}.
\end{align*} 
where the inequality follows from applying Lemma~\ref{lma-B}(b) to the total cost problem that has $g_+$ as the one-stage cost function.
As $n \to \infty$, $\E^\pi_x \left\{ \sum_{k=0}^{n-1} g_+(x_k, u_k) \right\} \uparrow J_\pi^+(x) < \infty$. So the preceding inequality implies that
$\limsup_{n \to \infty} \, \E^\pi_x \big\{ J^{*+}(x_n) \big\} = 0$ and hence $\lim_{n \to \infty} \, \E^\pi_x \big\{ J^{*+}(x_n) \big\} = 0$.
The second statement for $J^{*-}$ is proved similarly. We start with the inequality
\begin{align*} 
 J_\pi^-(x) & =  \E^\pi_x \left\{ \sum_{k=0}^{n-1} g_-(x_k, u_k) \right\} + \E^\pi_x \left\{ \sum_{k=n}^{\infty} g_-(x_k, u_k) \right\}  \\
   & \geq   \E^\pi_x \left\{ \sum_{k=0}^{n-1} g_-(x_k, u_k) \right\} + \oE^\pi_{n,x} \big\{ J^{*-}(x_n) \big\},
\end{align*} 
which follows from Lemma~\ref{lma-B}(b). 
Since as $n \to \infty$, 
$\E^\pi_x \left\{ \sum_{k=0}^{n-1} g_-(x_k, u_k) \right\}  \uparrow J_\pi^-(x) < \infty$, we obtain from the preceding inequality $\lim_{n \to \infty} \, \oE^\pi_{n,x} \big\{ J^{*-}(x_n) \big\} = 0$.
\end{proof}

\begin{proof}[Proof of Theorem~\ref{thm-vi}]
Let $J$ be a function satisfying the condition of the theorem, i.e., $J \in A(S)$ and for some $c \geq 1$, $J^* \leq J \leq c (J^{*+} + J^{*-})$. Since $J \geq J^*$, $T^n(J) \geq T^n(J^*)=J^*$ for all $n$ by the monotonicity of $T$. So to prove $T^n(J) \to J^*$, it is sufficient to consider an arbitrary state $x$ with $J^*(x) < +\infty$ and show that $\limsup_{n \to \infty} T^n(J)(x) \leq J^*(x)$.

Let $\epsilon > 0$ and let $\pi=(\mu_0, \mu_1, \ldots)$ be an $\epsilon$-optimal Markov policy for the state $x$; such a policy exists by Lemma~\ref{lma-B}(a). For any $n \geq 1$, by the definition of $T$ and the monotonicity of $T$, we have
\begin{align}
 T^n (J)(x) & \leq \big(T_{\mu_0} \circ T_{\mu_1} \circ \cdots T_{\mu_{n-1}} \big) (J)(x) \notag \\
       & = \E^{\pi}_x \left\{ \sum_{k=0}^{n-1} g(x_k, u_k) \right\} + \E^\pi_x \big\{ J(x_n) \big\} \notag \\
       & \leq \E^{\pi}_x \left\{ \sum_{k=0}^{n-1} g(x_k, u_k) \right\} + c \, \oE^\pi_{n,x} \big\{ J^{*+}(x_n) + J^{*-}(x_n) \big\}, \label{eq-prf-altprfP} \\
       & = \E^{\pi}_x \left\{ \sum_{k=0}^{n-1} g(x_k, u_k) \right\} + c \, \E^\pi_x \big\{ J^{*+}(x_n) \big\}  + c \, \oE^\pi_{n,x} \big\{ J^{*-}(x_n) \big\}, \label{eq-prf-altprfP2}
\end{align}
where we used the assumption $J \leq c \, (J^{*+} + J^{*-})$ and the definition (\ref{eq-outerint1}) of outer integral in the inequality~(\ref{eq-prf-altprfP}), and we used the fact $J^{*+} \in \M(S)$ and the equality~(\ref{eq-outerint2}) in Eq.~(\ref{eq-prf-altprfP2}).
By the $\epsilon$-optimality of $\pi$ for $x$, 
$$ \lim_{n \to \infty} \, \E^{\pi}_x \left\{ \sum_{k=0}^{n-1} g(x_k, u_k) \right\} = J_{\pi}(x) \leq J^*(x) + \epsilon < \infty.$$
Applying Lemma~\ref{lma-1} and using the fact $J_\pi(x) < \infty$, we also have
$\lim_{n \to \infty} \E^\pi_x \big\{ J^{*+}(x_n) \big\} = 0$ and $\lim_{n \to \infty} \oE^\pi_{n,x} \big\{ J^{*-}(x_n)  \big\} = 0$.
Combining these relations with the inequality~(\ref{eq-prf-altprfP2}), we obtain
$$  \limsup_{n \to \infty} \,T^n (J)(x) \leq J^*(x) + \epsilon.$$
Since $\epsilon$ is arbitrary, this gives $\limsup_{n \to \infty} T^n (J)(x) \leq J^*(x)$. Hence $T^n (J)(x) \to J^*(x)$ for all $x$, as discussed earlier. 
\end{proof}

\begin{rem} 
We have stated in Theorem~\ref{thm-m2}(a) that for the (GC) model, an $\epsilon$-optimal semi-Markov policy can be chosen to be nonrandomized. The construction of such a policy will use the fact that value iteration converges from above (for example, from $J^{*+}$). 
For details, see Appendix~\ref{appsec-prf-thm2a}. \qed
\end{rem}
\mysmallskip

Let $J^s$ be the minimal cost achievable by nonrandomized stationary policies: 
$$ \qquad \qquad J^s(x) := \inf_{\pi \in \Pi_s} J_\pi(x), \quad x \in S, \quad \text{where} \ \Pi_{s} = \big\{ \pi \in \Pi \mid \pi \ \text{nonrandomized and statioanry} \big\}. $$
We say $\Pi_s$ is \emph{locally adequate} if $J^s(x) = J^*(x)$ for all $x \in S$~\cite{ScS87}.
When the state space $S$ is countable, it is known that $J^s = T(J^s)$ (Feinberg and Sonin~\cite[Theorem 2.2]{FeS83}; see also \cite[Theorem 6.11]{Fein02}), so by Theorem~\ref{thm-vi} we obtain the following corollary about $J^s$.

\begin{cor} \label{cor-Js}
Suppose $S$ is countable. If $\Pi_s$ is not locally adequate, i.e., $J^s \not = J^*$, then for any arbitrarily large $c>0$, there exists some state $x$ with $J^s(x) > c \big(J^{*+}(x) + J^{*-}(x)\big)$.
\end{cor}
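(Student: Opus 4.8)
The plan is to argue by contradiction, converting the claimed unboundedness into the hypothesis of Theorem~\ref{thm-vi} and then deriving $J^s = J^*$. Suppose $J^s \neq J^*$ yet the conclusion fails, so that there is a single constant $c_0 > 0$ with $J^s(x) \le c_0\big(J^{*+}(x) + J^{*-}(x)\big)$ for every $x \in S$. Since $\Pi_s \subseteq \Pi$ we always have $J^s \geq J^*$, and by replacing $c_0$ with $\max\{c_0, 1\}$ if necessary we may assume $c_0 \geq 1$; thus $J^s$ is sandwiched as $J^* \leq J^s \leq c_0\,(J^{*+} + J^{*-})$, which is precisely the initial condition in Theorem~\ref{thm-vi}.

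Next I would verify the measurability hypothesis of Theorem~\ref{thm-vi}. Because $S$ is countable, every singleton is Borel and hence every subset of $S$ is Borel, therefore analytic; consequently every function on $S$ has analytic level sets and is lower semi-analytic, so in particular $J^s \in A(S)$. With this and the sandwich inequality just established, Theorem~\ref{thm-vi} applies to $J = J^s$ and yields $T^n(J^s) \to J^*$.

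Finally I would invoke the fixed-point property of $J^s$ on a countable state space, namely $J^s = T(J^s)$ (Feinberg and Sonin~\cite[Theorem 2.2]{FeS83}), which makes $T^n(J^s) = J^s$ the constant sequence and hence $T^n(J^s) \to J^s$. By uniqueness of the pointwise limit, $J^s = J^*$, contradicting the standing assumption $J^s \neq J^*$. This establishes that if $\Pi_s$ is not locally adequate, then no finite $c$ can satisfy $J^s(x) \le c\,(J^{*+}(x)+J^{*-}(x))$ at every state, which is exactly the assertion of the corollary.

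The argument is essentially immediate once Theorem~\ref{thm-vi} and the Feinberg--Sonin fixed-point equation are in hand; the only points needing care are the reduction to $c_0 \geq 1$ (so that Theorem~\ref{thm-vi} is applicable) and the verification that $J^s$ is lower semi-analytic, which relies specifically on the countability of $S$. I do not anticipate a substantive obstacle beyond these bookkeeping checks: the conceptual content is carried entirely by the convergence-from-above theorem, which forces any lower-semi-analytic fixed point of $T$ trapped between $J^*$ and a multiple of $J^{*+}+J^{*-}$ to coincide with $J^*$.
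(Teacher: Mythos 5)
Your proof is correct and follows exactly the route the paper intends: contrapositive via Theorem~\ref{thm-vi} applied to $J = J^s$, combined with the Feinberg--Sonin fixed-point identity $J^s = T(J^s)$ for countable $S$ to force $J^s = J^*$. The bookkeeping steps you flag (normalizing to $c_0 \geq 1$, and noting that every function on a countable Borel space is lower semi-analytic so $J^s \in A(S)$) are exactly the details the paper leaves implicit, and they check out.
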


For an example problem that illustrates Cor.~\ref{cor-Js}, see e.g.,~\cite[Example 6.5]{Fein02} (in which there exists a state $x$ with a non-compact control set such that $J^s(x) =1$ but $J^{*+}(x) + J^{*-}(x) =0$). Let us also mention in passing that regarding the local adequacy of nonrandomized stationary policies, it is known that $J^s=J^*$ if $S$ is countable and all the control sets $U(x)$ are finite (van der Wal~\cite[Theorem 2.22]{vdW81}), or if $S$ is a Borel space and either $g \leq 0$ (Blackwell~\cite{Blk-positive}) or certain continuity and compactness conditions are satisfied (Sch{$\ddot{\mathrm{a}}$}l~\cite{Sch83}). For countable $S$, it is also known that $J^s = J^*$ under much weaker conditions (see Feinberg~\cite[Chaps. 6.7-6.9]{Fein02}). 

\subsubsection*{The Special Case (UD)}

For discounted problems in the (UD) class (cf.\ Def.~\ref{def-ud}), the dynamic programming operator $T$ is given by Eq.~(\ref{eq-T}), except that the integral in (\ref{eq-T}) is multiplied by the discount factor $\beta$:
$$   T(J)(x)  = \inf_{u \in U(x)} \left\{ g(x,u) + \beta \int_S J(x') \, q(dx' \! \mid x, u) \right\},  \qquad x \in S, \ \forall \, J \in A(S).$$
 As noted by Whittle in the case of the positive costs model~\cite{Whit79}, for $\beta$-discounted problems, 
if $T^n(J) \to J^*$, then for any constant $b$, since $T^n(J+b) = T^n(J) + \beta^n b$, we also have $T^n(J+b) \to J^*$. 
Thus, the range of initial functions in Theorem~\ref{thm-vi} can be enlarged, leading to stronger conclusions:

\mysmallskip
\begin{cor} \label{cor-ud1}
{\rm (UD)} \ For any function $J \in A(S)$ satisfying $J^* - b \leq J \leq c \, (J^{*+} + J^{*-})  + b$ for some scalars $c \geq 1$, $b \geq 0$, we have $T^n (J) \to J^*$. In particular, if $J^*$ is bounded above, then $T^n(\0) \to J^*$.
\end{cor}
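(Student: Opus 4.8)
The plan is to reduce Cor.~\ref{cor-ud1} to Theorem~\ref{thm-vi}, which applies to (UD) as a special case of (GC), by exploiting the discounted shift identity $T^n(J'+b) = T^n(J') + \beta^n b$ together with the monotonicity of $T$ and the optimality equation $J^* = T(J^*)$ from Theorem~\ref{thm-m1}(b). Concretely, I would trap $J$ between two functions whose iterates are already known to converge to $J^*$: a downward constant shift of $J^*$ from below, and (after an upward shift by $b$) a genuinely lower-semi-analytic function covered by Theorem~\ref{thm-vi} from above. Since $\beta^n b \to 0$ as $\beta \in [0,1)$, the constant shifts vanish in the limit and pin both bounds at $J^*$.

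For the lower bound, I would note that $J^* - b \in A(S)$ (subtracting a finite constant merely translates the level sets, preserving lower semi-analyticity), and that the optimality equation with the shift identity gives $T^n(J^* - b) = J^* - \beta^n b$. As $J \geq J^* - b$ and $T$ is monotone, this yields $T^n(J) \geq J^* - \beta^n b$, so that $\liminf_{n \to \infty} T^n(J) \geq J^*$ pointwise; this argument covers every state, including those with $J^*(x) = +\infty$. For the upper bound, the natural dominating function $c\,(J^{*+}+J^{*-}) + b$ cannot be fed directly into $T^n$ and the shift identity, because $J^{*-}$ need not be universally measurable and so this function need not lie in $A(S)$ — this measurability issue is the one genuine subtlety of the proof. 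I would circumvent it by setting $J_0 := \max\{J^*,\, J - b\}$. As the pointwise maximum of two lower-semi-analytic functions, $J_0 \in A(S)$; it satisfies $J^* \leq J_0 \leq c\,(J^{*+}+J^{*-})$, the upper inequality because both $J^*$ and $J - b$ are dominated by $c\,(J^{*+}+J^{*-})$ (using $c \geq 1$, $J^{*+},J^{*-}\geq 0$, and the hypothesis $J \leq c\,(J^{*+}+J^{*-})+b$); and $J \leq J_0 + b$ by construction. Theorem~\ref{thm-vi} then gives $T^n(J_0) \to J^*$, whence by monotonicity and the shift identity $T^n(J) \leq T^n(J_0 + b) = T^n(J_0) + \beta^n b \to J^*$, so $\limsup_{n \to \infty} T^n(J) \leq J^*$. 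Combining the two bounds yields $T^n(J) \to J^*$.

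Finally, for the special case I would take $b := \max\{0,\, \sup_x J^*(x)\} < \infty$ (finite precisely because $J^*$ is bounded above, while $J^* > -\infty$ always holds) and any $c \geq 1$; then $\0 \in A(S)$ and $J^* - b \leq \0 \leq c\,(J^{*+}+J^{*-}) + b$, the right inequality holding since $J^{*+}, J^{*-}, b \geq 0$. The main statement then applies to $J = \0$ and gives $T^n(\0) \to J^*$. The only step demanding care, as noted, is the preservation of lower semi-analyticity when forming the sandwiching function $J_0$; everything else is routine monotonicity together with the geometric decay of $\beta^n b$.
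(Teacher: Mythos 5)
Your proof is correct and follows essentially the route the paper intends: the paper justifies Cor.~\ref{cor-ud1} only by the one-line remark that the shift identity $T^n(J+b)=T^n(J)+\beta^n b$ enlarges the admissible range of Theorem~\ref{thm-vi}, and your sandwich argument (lower bound via $T^n(J^*-b)=J^*-\beta^n b$, upper bound via $J_0=\max\{J^*,J-b\}\in A(S)$ fed into Theorem~\ref{thm-vi}) is the natural and careful way to carry that out. The attention to the fact that $c\,(J^{*+}+J^{*-})+b$ itself need not be lower semi-analytic, handled by constructing $J_0$, is a welcome refinement of a detail the paper glosses over.
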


\section{Convergence of Value Iteration starting from Zero} 
\label{sec4}
\markboth{\rm \S \ref{sec4}. Convergence of Value Iteration from Zero}{\rm \S \ref{sec4}. Convergence of Value Iteration from Zero}

In this section we tackle the question of the convergence of $T^n(\0)$ to $J^*$.
The definition of $J^*$ implies\footnote{For any policy $\pi$, $\limsup_{n \to \infty} \, T^n(\0) \leq J_\pi$, from which the inequality~(\ref{eq-ineq0}) follows.}
\begin{equation} \label{eq-ineq0}
 J_\infty:= \limsup_{n \to \infty} \, T^n(\0) \leq J^*,
\end{equation}
so the convergence of $T^n(\0)$ in question amounts to $\liminf_{n \to \infty} T^n(\0)  \geq J^*$.

The behavior of $T^n(\0)$ is fairly complex because the one-stage cost function can take both positive and negative values.
It is known that for the (GC) model:
\begin{itemize}
\item[(i)] $\{T^n(\0)\}$ need not converge to $J^*$. It may not even have a pointwise limit. (See e.g., \cite[Example 6.11]{Fein02} or the subsequent Example~\ref{ex-nolimit} in Section~\ref{sec5}.)
\item[(ii)] Even if $J_\infty = \limsup_{n \to \infty} T^n(0)$ is a fixed point of $T$ (and the limit of $T^n(\0)$), it is still possible that $J_\infty \not= J^*$. This can happen even for finite state and control problems. (See e.g., \cite[Example 6.10]{Fein02} or the subsequent Example~\ref{ex-rel2P} in Section~\ref{sec4.1}.) 
\end{itemize}

In what follows we first discuss two special cases, $J^* \leq 0$ and $J^* \geq 0$, in which the behavior of $T^n(\0)$ is shown to be similar to its behavior in the negative model and in the positive model, respectively. For the case $J^* \geq 0$, we will give several results about fixed point properties of $T$ and convergence properties of transfinite value iteration. These results suggest that when $J^* \geq 0$, the (GC) model is similar to the positive costs model in many aspects. They lead us further to prove the convergence of non-transfinite value iteration for finite state or finite control problems, where value iteration employs the operator $\max\{ \0, T(\cdot)\}$, which is derived from our analysis of transfinite value iteration. 

We then focus on the general case where $J^*$ may take both positive and negative values. We give sufficient conditions for the convergence of $T^n(\0)$ and more generally, sufficient conditions for the convergence of $T^n(\0)(x)$ to $J^*(x)$ for some states $x$. These conditions can be viewed as a generalization of the bridging condition of Whittle~\cite{Whit79} from the positive cost model to the more general class of total cost problems considered here. We shall discuss this connection after presenting the convergence results. A special case of our results which relates to two existing sufficient conditions for value iteration will also be discussed then.

\subsection{Special Cases where $J^*$ is Nonpositive or Nonnegative} \label{sec4.1}

First, we observe that if $J^* \leq 0$, the problem is similar to the negative costs model (N), in the sense that $T^n(\0) \to J^*$.
This follows from Theorem~\ref{thm-vi}, but a direct proof is simpler: when $J^* \leq 0$, by the monotonicity of $T$, $T^n(\0) \geq T^n(J^*)=J^*$, whereas $\limsup_{n \to \infty} T^n(\0) \leq J^*$.

If $J^* \geq 0$, the (GC) model exhibits a number of properties analogous to those in the positive costs model (P), which will be the focus of our discussion below. As Theorem~\ref{thm-m2}(c) already showed, in the (GC) model, when $J^* \geq 0$, we have the same structural properties of optimal policies as in the positive costs model. What we are going to discuss now are the fixed point properties of $T$ and the convergence properties of value iteration starting from zero, when $J^* \geq 0$.

Let us first recall that in the positive costs model, if $J$ is a nonnegative lower semi-analytic function on $S$ and $J \geq T(J)$, then $J \geq J^*$, so $J^*$ is the unique fixed point of $T$ within the set $\{ J \in A(S) \mid 0 \leq J \leq J^*\}$ \cite[Prop.\ 9.10]{bs}.
The (GC) model has a similar property:

\begin{prop} \label{prp-relate2P}
If a function $J \in A(S)$ satisfies $J \geq 0$ and $J \geq T(J)$, then $J \geq J^*$. 
\end{prop}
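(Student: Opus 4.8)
The plan is to show that the hypotheses force $J$ to dominate the total costs of an explicitly constructed family of policies, and then make those policies nearly optimal. Fix $\epsilon > 0$ together with positive numbers $\epsilon_0, \epsilon_1, \ldots$ satisfying $\sum_{k=0}^\infty \epsilon_k \le \epsilon$. Since $J \in A(S)$ and $J \ge 0$, the map $(x,u) \mapsto g(x,u) + \int_S J(x')\,q(dx'\mid x,u)$ is lower semi-analytic, and its pointwise infimum over $u \in U(x)$ is precisely $T(J)$. By the measurable selection theorem \cite[Prop.~7.50]{bs}, for each $k$ there is a universally measurable selector $\mu_k$ with $\mu_k(x) \in U(x)$ and $T_{\mu_k}(J) \le T(J) + \epsilon_k$ for all $x$. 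Let $\pi = (\mu_0, \mu_1, \ldots)$ be the resulting nonrandomized Markov policy.

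The key step is an additive $n$-stage bound. Using the hypothesis $T(J) \le J$, the selection inequality gives $T_{\mu_k}(J) \le J + \epsilon_k$, and because $J \ge 0$ we have the constant-shift identity $T_{\mu}(J + c) = T_\mu(J) + c$ for every constant $c \ge 0$. Composing these operators and using the monotonicity of each $T_{\mu_k}$, a short induction yields $\big(T_{\mu_0}\circ \cdots \circ T_{\mu_{n-1}}\big)(J) \le J + \sum_{k=0}^{n-1}\epsilon_k \le J + \epsilon$ for all $n$. Expanding the left-hand side in the standard way (as in the display following~(\ref{eq-prf-altprfP})),
$$ \big(T_{\mu_0}\circ \cdots \circ T_{\mu_{n-1}}\big)(J)(x) = \E^\pi_x\Big\{\sum_{k=0}^{n-1} g(x_k,u_k)\Big\} + \E^\pi_x\big\{J(x_n)\big\}, $$
and dropping the nonnegative terminal term $\E^\pi_x\{J(x_n)\} \ge 0$, we obtain $\E^\pi_x\{\sum_{k=0}^{n-1} g(x_k,u_k)\} \le J(x) + \epsilon$ for all $n$ and $x$. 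Letting $n \to \infty$ and invoking the (GC) identity $J_\pi(x) = \lim_{n} \E^\pi_x\{\sum_{k=0}^{n-1} g(x_k,u_k)\}$ gives $J_\pi(x) \le J(x) + \epsilon$, hence $J^*(x) \le J_\pi(x) \le J(x) + \epsilon$; since $\epsilon$ was arbitrary, $J \ge J^*$.

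The main obstacle is that $g$ may take negative values, which endangers both the selection step and the passage to the limit. To keep the selection genuinely near-optimal so that the additive bound does not degrade, I would first verify $T(J)(x) > -\infty$ for every $x$: the (GC) condition~(\ref{eq-tmodel}) forces $g_-(x,u) \le \sup_{\pi}J_\pi^-(x) < \infty$, hence $g(x,u) \ge -\sup_\pi J_\pi^-(x)$, and since $\int_S J\,dq \ge 0$ this bounds $T(J)$ from below. The same (GC) finiteness keeps $\E^\pi_x\{\sum_{k=0}^{n-1} g_-(x_k,u_k)\}$ bounded, which is what guarantees that the finite-horizon costs converge to $J_\pi(x)$ and that no $\infty - \infty$ arises in the limit. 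This argument is the (GC) analogue of the characterization of $J^*$ as the least nonnegative solution of $J \ge T(J)$ in the positive model \cite[Prop.~9.10]{bs}.
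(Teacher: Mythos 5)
Your proposal is correct and follows essentially the same route as the paper's own proof: bounding $T(J)$ below via the (GC) condition, extracting $\epsilon_k$-optimal stationary selectors by the measurable selection theorem, composing them into a Markov policy whose $n$-stage operators stay below $J+\epsilon$, and passing to the limit to get $J^* \le J_\pi \le J+\epsilon$. The only cosmetic difference is that you drop the nonnegative terminal term $\E^\pi_x\{J(x_n)\}$ explicitly, whereas the paper uses $J \ge \0$ to replace the terminal function by $\0$ and invokes $(T_{\mu_0}\circ\cdots\circ T_{\mu_k})(\0) \to J_\pi$; these are the same step.
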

\begin{proof}
Under the (GC) condition~(\ref{eq-tmodel}), $T(\0) > - \infty$, so by the monotonicity of $T$ and the assumption $J \geq 0$,  $T(J)(x) \geq T(\0)(x) > - \infty$ for all $x \in S$. It then follows from the selection theorem~\cite[Prop.\ 7.50]{bs} that given any $\epsilon > 0$, there exist stationary policies $\mu_k$, $k \geq 0$, such that
\begin{equation} \label{eq-prf-rel2Pa}
   T_{\mu_k}(J) \leq T (J) + 2^{-k-1} \epsilon \leq J + 2^{-k-1} \epsilon,
\end{equation}   
where the second inequality follows from the assumption $T(J) \leq J$.
Consider the Markov policy $\pi = (\mu_0, \mu_1, \ldots)$. By Eq.~(\ref{eq-prf-rel2Pa}) and the monotonicity of the mappings $T_{\mu_0}, T_{\mu_1}, \ldots$, we have
$$ T_{\mu_0}(J)  \leq  J + 2^{-1}  \epsilon, \qquad \big(T_{\mu_0} \circ T_{\mu_1} \big)(J) \leq J + (2^{-1} + 2^{-2}) \cdot \epsilon, \qquad \ldots$$
and consequently,
$$  \big(T_{\mu_0} \circ T_{\mu_1} \circ \cdots  T_{\mu_k} \big) (\0) \leq \big(T_{\mu_0} \circ T_{\mu_1} \circ \cdots  T_{\mu_k} \big) (J)  \leq J  + \epsilon, \qquad \forall \, k \geq 1,$$
where we used the assumption $J \geq 0$ in the first inequality. As $k \to \infty$, the first expression in the preceding inequality converges to $J_{\pi}$ in the (GC) model; therefore,
$J^* \leq J_\pi \leq J + \epsilon$ and equivalently, $J^* \leq J$ since $\epsilon$ is arbitrary. 
\end{proof}

The next two corollaries are direct consequences of the preceding proposition. 
The second corollary uses also the fact that $J_\infty = \limsup_{n \to \infty} T^n(\0) \leq J^*$.

\begin{cor} \label{cor-gc1}
If $J^* \geq 0$, then within the set $\{ J \in A(S)  \mid 0 \leq J \leq J^*  \}$, $J^*$ is the unique fixed point of $T$.
\end{cor}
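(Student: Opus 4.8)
The plan is to split the claim into two pieces: first that $J^*$ is a fixed point of $T$ lying in the set $\{ J \in A(S) \mid 0 \leq J \leq J^* \}$, and second that it is the only such fixed point. Both follow immediately from results already in hand, so no new machinery is needed.

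For the first piece, I would invoke the optimality equation $J^* = T(J^*)$ from Theorem~\ref{thm-m1}(b), which exhibits $J^*$ as a fixed point of $T$. Since we are assuming $J^* \geq 0$ and trivially $J^* \leq J^*$, and since $J^* \in A(S)$ by Theorem~\ref{thm-m1}(a), the function $J^*$ lies in the set in question. Thus $J^*$ is a fixed point of $T$ within this set.

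For uniqueness, I would take an arbitrary $J \in A(S)$ with $0 \leq J \leq J^*$ and $J = T(J)$, and show $J = J^*$. The equation $J = T(J)$ yields in particular $J \geq T(J)$, while membership in the set gives $J \geq 0$. These are exactly the hypotheses of Proposition~\ref{prp-relate2P}, which therefore gives $J \geq J^*$. Combining this with the assumed upper bound $J \leq J^*$ forces $J = J^*$, completing the argument.

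There is no real obstacle here: the entire substantive content is carried by Proposition~\ref{prp-relate2P}, and the corollary merely records that a fixed point is in particular a function dominated by its own image, so the proposition applies. The only point that requires the standing hypothesis $J^* \geq 0$ is the verification that $J^*$ itself belongs to the set $\{ J \in A(S) \mid 0 \leq J \leq J^* \}$; this is precisely where the nonnegativity assumption enters.
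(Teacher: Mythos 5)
Your proof is correct and is exactly the argument the paper intends: the paper states that Corollary~\ref{cor-gc1} is a direct consequence of Proposition~\ref{prp-relate2P}, and your uniqueness step (a fixed point $J$ with $0 \leq J \leq J^*$ satisfies $J \geq T(J)$ and $J \geq 0$, hence $J \geq J^*$ by the proposition) together with the optimality equation $J^* = T(J^*)$ from Theorem~\ref{thm-m1}(b) is precisely that derivation.
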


\begin{cor} \label{cor-gc2}
If $J_\infty$ is a fixed point of $T$ and $J_\infty \geq 0$, then $J_\infty = J^*$.
Hence, if $J_\infty$ is a fixed point of $T$ and yet $J_\infty \not=J^*$, then $J_\infty(x) < 0$ for some state $x$.
\end{cor}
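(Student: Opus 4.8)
The plan is to derive the first assertion directly from Proposition~\ref{prp-relate2P} together with the inequality~(\ref{eq-ineq0}), and then to obtain the second assertion as its contrapositive. By~(\ref{eq-ineq0}) we already have $J_\infty \leq J^*$, so the only thing to establish is the reverse inequality $J_\infty \geq J^*$ under the hypotheses that $J_\infty$ is a fixed point of $T$ and $J_\infty \geq 0$.

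First I would confirm that $J_\infty \in A(S)$, so that Proposition~\ref{prp-relate2P} actually applies. Since $T$ maps $A(S)$ into $A(S)$ and $\0 \in A(S)$, each iterate $T^n(\0)$ lies in $A(S)$; and because lower semi-analyticity is preserved under countable suprema and infima (level sets of the form $\{\,\cdot < c\,\}$ go over into countable unions and intersections of analytic sets), the function $J_\infty = \inf_m \sup_{n \geq m} T^n(\0)$ is again lower semi-analytic. Alternatively, one simply notes that a fixed point $J_\infty = T(J_\infty)$ lies in the range of $T$, which is contained in $A(S)$. With this in hand, the fixed point property gives $J_\infty = T(J_\infty)$, hence in particular $J_\infty \geq T(J_\infty)$; combined with the hypothesis $J_\infty \geq 0$, Proposition~\ref{prp-relate2P} yields $J_\infty \geq J^*$. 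Together with $J_\infty \leq J^*$ this proves $J_\infty = J^*$.

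For the final sentence I would simply contrapose: if $J_\infty$ is a fixed point of $T$ but $J_\infty \neq J^*$, then by what was just shown the assumption $J_\infty \geq 0$ cannot hold, and therefore $J_\infty(x) < 0$ for some state $x$. There is no substantive obstacle in this argument, which is why the statement is labeled a corollary: the entire content is carried by Proposition~\ref{prp-relate2P} and the basic bound~(\ref{eq-ineq0}). The only point demanding a moment's care is the membership $J_\infty \in A(S)$, settled as above.
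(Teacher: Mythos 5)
Your argument is correct and is exactly the one the paper intends: it states that this corollary is a direct consequence of Proposition~\ref{prp-relate2P} together with the inequality~(\ref{eq-ineq0}), and your extra care about $J_\infty \in A(S)$ matches the paper's own remark that $J_\infty \in A(S)$ by \cite[Lemma 7.30(2)]{bs}. Nothing further is needed.
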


For the discounted problems in the (UD) class (cf.\ Def.~\ref{def-ud}), by the same reasoning given at the end of Section~\ref{sec3}, we can strengthen Cors.~\ref{cor-gc1},~\ref{cor-gc2} to obtain stronger conclusions. They are given by the next two corollaries, with the second one being a consequence of the first.

\begin{cor} \label{cor-ud3}
{\rm (UD)} \ If $J^* \geq 0$, then $J^*$ is the unique fixed point of $T$ within the set 
$$\{ J \in A(S)  \mid -b \leq J \leq J^* +b, \ \text{for some scalar} \ b \geq 0  \}.$$ 
\end{cor}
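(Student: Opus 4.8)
The plan is to fix an arbitrary $J \in A(S)$ with $J = T(J)$ and $-b \le J \le J^* + b$ for some $b \ge 0$, and to show $J = J^*$. Existence of such a fixed point in the set is immediate: $J^* = T(J^*)$ by Theorem~\ref{thm-m1}(b), and the hypothesis $J^* \ge 0$ places $J^*$ in the set (take $b=0$). The workhorse throughout is the constant-shift identity peculiar to (UD): for any scalar $a$ and any $J' \in A(S)$ that is bounded below, $T(J' + a) = T(J') + \beta a$, because the additive constant $\beta a$ factors out of the infimum defining $T$; iterating gives $T^n(J' + a) = T^n(J') + \beta^n a$. Applied to the fixed point, this simply records $T^n(J) = J$ for all $n$.

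The upper bound $J \le J^*$ is the easy half. Since $J^* = T(J^*)$, the shift identity gives $T^n(J^* + b) = J^* + \beta^n b$. From $J \le J^* + b$ and the monotonicity of $T$, I would conclude $J = T^n(J) \le T^n(J^* + b) = J^* + \beta^n b$; letting $n \to \infty$ and using $\beta \in [0,1)$ yields $J \le J^*$.

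The lower bound $J \ge J^*$ is the crux, and the point where a naive argument breaks down: from $J \ge -b$ and monotonicity one gets only $J = T^n(J) \ge T^n(\0) - \beta^n b$, and since $T^n(\0)$ need not converge to $J^*$ in (GC)/(UD), this delivers no more than $J \ge J_\infty$. To get around this I would first boost the lower bound to a \emph{shifted fixed point}. Setting $\tilde J := J + b$, one has $\tilde J \ge 0$ (as $J \ge -b$) and, by the shift identity together with $T(J)=J$, $T(\tilde J) = T(J) + \beta b = \tilde J - (1-\beta)b \le \tilde J$. Proposition~\ref{prp-relate2P}, which applies to (UD) viewed as a (GC) model with the discounted operator, then gives $\tilde J \ge J^*$, i.e.\ $J \ge J^* - b$. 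Now the second step exploits that $J^* - b$ is itself a constant shift of the fixed point $J^*$: the shift identity gives $T^n(J^* - b) = J^* - \beta^n b$, so applying monotonicity to $J \ge J^* - b$ yields $J = T^n(J) \ge J^* - \beta^n b \to J^*$, hence $J \ge J^*$. Combining the two bounds gives $J = J^*$.

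I expect the main obstacle to be precisely this lower bound: one cannot run value iteration upward from the constant $-b$, since $T^n(\0) \not\to J^*$ in general, so the argument must route through Proposition~\ref{prp-relate2P} in order to replace the useless constant lower bound $-b$ by the shifted optimal cost $J^* - b$, whose value-iteration iterates \emph{do} converge to $J^*$ by the shift identity. A minor technical point to verify along the way is the well-definedness of the shift identity (absence of $\infty - \infty$): this holds because every function entering it is bounded below by a constant, so its integral against $q(\cdot \mid x, u)$ is unambiguous, and $T(J^*)$ is already known to be well-defined for the (GC) model.
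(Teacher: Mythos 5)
Your proposal is correct and follows essentially the same route as the paper: both halves hinge on the constant-shift identity $T^n(J'+a)=T^n(J')+\beta^n a$, and the lower bound is obtained exactly as in the paper by applying Proposition~\ref{prp-relate2P} to the shifted function $J+b\geq 0$ (your derivation $J\geq J^*-b$ followed by iterating $T^n$ is just the paper's computation $T^n(J+b)=J+\beta^n b\geq J^*$ read from the other side). Your diagnosis of why the naive lower bound via $T^n(\0)$ fails matches the motivation behind the paper's argument.
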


\begin{proof}
Let $\bar J$ be a fixed point of $T$ in the set of functions stated by the corollary. Let $\beta$ be the discount factor. We show that $\bar J = J^*$. Let $\hat J = \bar J +b \geq 0$. Then $T(\hat J) = T(\bar J) + \beta b = \bar J + \beta b \leq \hat J$, so by Prop.~\ref{prp-relate2P}, $\hat J \geq J^*$. Consequently, $T^n(\hat J) \geq J^*$ for all $n$. We also have $T^n(\hat J) \to \bar J$ since $T^n(\hat J) = T^n(\bar J) + \beta^n b = \bar J + \beta^n b$. Therefore, $\bar J \geq J^*$. On the other hand, since $\bar J \leq J^* +b$, we have $\bar J = T^n(\bar J) \leq T^n(J^* + b) \to J^*$. Thus, $\bar J = J^*$.
\end{proof}

\begin{cor} \label{cor-ud4}
{\rm (UD)} \ If $J^* \geq 0$ and if in addition, the function $J_\infty$ 
is a fixed point of $T$ and yet $J_\infty \not=J^*$, then $\inf_{x \in S} J_\infty(x) = - \infty$. 
\end{cor}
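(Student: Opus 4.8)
The plan is to derive the claim from Corollary~\ref{cor-ud3} by contradiction, exploiting the uniqueness of $J^*$ as a fixed point of $T$ in a two-sided bounded band around it. Suppose, contrary to the assertion, that $\inf_{x \in S} J_\infty(x) > - \infty$. Then there is a scalar $b \geq 0$ with $J_\infty \geq -b$ on $S$. Combining this with the universal inequality $J_\infty \leq J^*$ from~(\ref{eq-ineq0}) and the hypothesis $J^* \geq 0$, which gives $J^* \leq J^* + b$, we would have $-b \leq J_\infty \leq J^* + b$; that is, $J_\infty$ lies in the band described in Corollary~\ref{cor-ud3}.

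Before invoking that corollary I must check that $J_\infty \in A(S)$, since membership in $A(S)$ is part of the hypothesis of Corollary~\ref{cor-ud3}. This follows from the closure of the lower semi-analytic functions under countable infima and suprema: each iterate $T^n(\0)$ is lower semi-analytic because $\0 \in A(S)$ and $T$ maps $A(S)$ into $A(S)$, and writing $J_\infty = \inf_n \sup_{k \geq n} T^k(\0)$ exhibits it as a countable infimum of countable suprema of functions in $A(S)$, hence as an element of $A(S)$.

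With $J_\infty \in A(S)$ established, the hypothesis that $J_\infty$ is a fixed point of $T$ together with $-b \leq J_\infty \leq J^* + b$ places $J_\infty$ in the set of Corollary~\ref{cor-ud3}, on which $J^*$ is the unique fixed point. Hence $J_\infty = J^*$, contradicting the standing assumption $J_\infty \not= J^*$. Therefore $\inf_{x \in S} J_\infty(x) = - \infty$.

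The argument is short, and the only step requiring care is the verification that $J_\infty$ is lower semi-analytic; the rest is bookkeeping with the inequalities. I expect no genuine obstacle, since the result is explicitly flagged as a consequence of Corollary~\ref{cor-ud3}, and the only substantive ingredient beyond that corollary is the standard fact that $A(S)$ is closed under the countable operations defining a pointwise $\limsup$.
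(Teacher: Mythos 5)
Your proof is correct and follows exactly the route the paper intends: the paper states Cor.~\ref{cor-ud4} as an immediate consequence of Cor.~\ref{cor-ud3}, obtained by noting that a bounded-below $J_\infty$ would lie in the band $-b \leq J_\infty \leq J^* + b$ and hence coincide with $J^*$ by uniqueness of the fixed point there. Your extra check that $J_\infty \in A(S)$ is the same fact the paper records (via \cite[Lemma 7.30(2)]{bs}) just before Cor.~4.6 of Section~\ref{sec4.1}, so there is nothing to add.
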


\subsubsection*{Transfinite Value Iteration}

Transfinite value iteration was introduced and analyzed by Maitra and Sudderth~\cite{MS92} for the positive costs model in the universal measurability framework. In an abstract dynamic programming context, it was also briefly discussed in~\cite[p.\ 463]{KreP77}. Transfinite value iteration is defined by transfinite recursion---recursion defined on a general well-ordered set; by comparison, value iteration is ordinarily defined by the classical recursion on the set of nonnegative integers. The need for studying transfinite value iteration comes from the convergence difficulty of ordinary value iteration.

The well-ordered set $(X, <)$ involved in transfinite value iteration is a set of ordinals, denoted $\{0, 1, \ldots, \omega_1\}$, with $\omega_1$ being the first uncountable ordinal.
\footnote{We give in Appendix~\ref{appendix3-1} an informal introduction of ordinals. As a full definition of ordinals is beyond the scope of this paper, we refer the readers to the books \cite[Chaps.\ 1.3, A.3]{Dud02} and \cite[p.\ 27-28]{Kur-topology} for the theory of ordinals, and their application in transfinite recursion and induction. See \cite[Chaps.\ 1.3]{Dud02} also for the general principles of recursion and induction on well-ordered sets, which extend the principles of the classical recursion and induction on the set of nonnegative integers.}  
The transfinite value iteration for the positive costs model \cite{MS92} can be described as follows: for ordinals $\xi \leq \omega_1$, define recursively
\begin{equation} \label{eq-def-transvip}
   T^0(J) = J; \qquad T^\xi(J) = T \Big( \sup _{\eta < \xi} \, T^\eta (J) \Big), \quad 0 < \xi < \omega_1; \qquad T^{\omega_1}(J) = \sup _{\xi < \omega_1} \, T^\xi (J).
\end{equation}   
It follows from the recursion principle on a well-ordered set \cite[Theorem 1.3.2]{Dud02} and the properties of the universal measurability framework
that the preceding recursion is well-defined, with $T^\xi(J) \in A(S)$ for all $\xi < \omega_1$.

We define transfinite value iteration similarly for the (GC) model. It will be more convenient for us to work with a mapping $\T: A(S) \to A(S)$, which is a modification of the dynamic programming operator $T$, and which has the desired increasing property $\T(J) \geq J$:
\begin{equation} \label{eq-def-tT}
\T(J) : = \max \big\{ J  , \, T(J) \big\}.
\end{equation}
We use $\T$ to define transfinite value iteration as follows. 
For ordinals $\xi < \omega_1$, define recursively
\begin{equation} 
    \T^0(J) = J, \qquad \T^1(J) = \T(J),  \notag 
\end{equation}    
and
\begin{equation} \label{eq-transvi0}
     \T^\xi(J) =  \T \Big( \sup _{\eta < \xi} \, \T^\eta (J) \Big), \qquad 1 < \xi < \omega_1.
\end{equation}    
We then let
\begin{equation}
     \T^{\omega_1} (J) = \sup_{\xi < \omega_1} \T^\xi (J). \label{eq-transvi1}
\end{equation}     
Similar to the recursion (\ref{eq-def-transvip}), the preceding recursion is well-defined, with $\T^\xi(J) \in A(S)$ for all $\xi < \omega_1$, by the recursion principle on a well-ordered set \cite[Theorem 1.3.2]{Dud02} and by the properties of the universal measurability framework.

Note that since $J^* = T(J^*)$, we have $\T(J^*) = J^*$ and hence $\T^{\omega_1} (J^*) = J^*$. 
Note also that by the increasing property $\T(J) \geq J$, the transfinite value iterates are nondecreasing:
$$ \T^\xi(J) \geq \T^\eta(J) \quad \text{if} \  \ \xi > \eta.$$ 
Using the monotonicity of $T$, we can also write Eq.~(\ref{eq-transvi0}) equivalently as
\begin{equation} \label{eq-tT-alt}
  \T^\xi(J) = \max\big\{J \, , \, T(\tilde J_\xi) \big\}, \qquad \text{where} \ \ \  \tilde J_\xi =  \sup_{\eta < \xi} \, \T^\eta (J).
\end{equation}  
For an integer $n \geq 0$, in general $\T^n(J) \not= T^n(J)$ (the ordinary value iteration). However, if $J$ satisfies $T(J) \geq J$, then by the monotonicity of $T$, the two iterations become identical:
$$\T(J) = \max \big\{ J, \, T(J) \big\} = T(J), \qquad \T^2(J) = \max \big\{ \T(J), \, T(\T(J)) \big\} = T^2(J), \ \ \ldots,$$ 
and so do $\T^\xi(J)$ and $T^\xi(J)$ for all ordinals $\xi \leq \omega_1$.

For the positive costs model (P), Maitra and Sudderth~\cite{MS92} showed that transfinite value iteration starting from zero converges to $J^*$, with $T^{\omega_1}(\0)=J^*$. Using their proof arguments together with Prop.~\ref{prp-relate2P}, we can show that 
the (GC) model has a similar property:

\begin{restatable}[Convergence of transfinite value iteration]{thm}{thmreltoP} \label{thm-relate2P} 
If $J^* \geq 0$, then $\T^{\omega_1}(\0) = J^*$.
\end{restatable}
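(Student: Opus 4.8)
The plan is to prove the two inequalities $\T^{\omega_1}(\0) \le J^*$ and $\T^{\omega_1}(\0) \ge J^*$ separately, the first by transfinite induction and the second by identifying $\T^{\omega_1}(\0)$ as a fixed point and invoking Prop.~\ref{prp-relate2P}. For the upper bound I would show by transfinite induction that $\T^\xi(\0) \le J^*$ for every $\xi < \omega_1$. The base case $\T^0(\0) = \0 \le J^*$ uses the hypothesis $J^* \ge 0$. For the inductive step I would use the equivalent recursion form~(\ref{eq-tT-alt}): assuming $\T^\eta(\0) \le J^*$ for all $\eta < \xi$, the supremum $\tilde J_\xi = \sup_{\eta < \xi} \T^\eta(\0)$ satisfies $\tilde J_\xi \le J^*$, so by monotonicity of $T$ and the optimality equation $T(J^*) = J^*$ [Theorem~\ref{thm-m1}(b)],
$$ \T^\xi(\0) = \max\big\{\0, \, T(\tilde J_\xi)\big\} \le \max\big\{\0, \, T(J^*)\big\} = \max\{\0, J^*\} = J^*, $$
the last step again using $J^* \ge 0$. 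Taking the supremum over $\xi < \omega_1$ gives $\T^{\omega_1}(\0) \le J^*$. This direction is routine.

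For the lower bound, write $W = \T^{\omega_1}(\0)$. Since the transfinite iterates are nondecreasing and $\T^0(\0) = \0$, we have $W \ge \0$, hence $W \ge 0$. The key reduction is to show that $W$ is a fixed point of $\T$, equivalently that $T(W) \le W$. Granting this, $W$ satisfies both $W \ge 0$ and $W \ge T(W)$, so Prop.~\ref{prp-relate2P} immediately yields $W \ge J^*$, which together with the upper bound completes the proof. To bound $T(W)$ I would use that successor iterates dominate a single application of $T$: since $\T^{\xi+1}(\0) = \max\{\0, T(\T^\xi(\0))\} \ge T(\T^\xi(\0))$ and the successor ordinals are cofinal in $\omega_1$, one gets
$$ \sup_{\xi < \omega_1} T\big(\T^\xi(\0)\big) \le \sup_{\xi < \omega_1} \T^{\xi+1}(\0) = W. $$
Thus it suffices to establish $T(W) \le \sup_{\xi<\omega_1} T(\T^\xi(\0))$, i.e. that $T$ may be passed through the transfinite supremum $\sup_{\xi < \omega_1}$ (the reverse inequality being automatic from monotonicity, since $W \ge \T^\xi(\0)$).

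The main obstacle is exactly this interchange, i.e. showing that transfinite value iteration has stabilized by the first uncountable ordinal. The difficulty is genuine: $T$ involves integration against the kernels $q(\cdot \mid x,u)$ followed by an infimum over controls, and $T$ is \emph{not} continuous from below along arbitrary increasing families (this is precisely why $T^n(\0)$ can fail to reach $J^*$), so neither the monotone passage of the integral nor the inf--sup interchange is for free. Following Maitra and Sudderth~\cite{MS92}, I would exploit that $\omega_1$ is the first uncountable ordinal: for each fixed $(x,u)$ the map $\xi \mapsto \int_S \T^\xi(\0)(x') \, q(dx' \mid x,u)$ is a nondecreasing $[0,\infty]$-valued transfinite sequence, and any nondecreasing sequence of reals indexed by the countable ordinals is eventually constant from some countable ordinal onward (a strictly increasing sequence of reals is necessarily countable). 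Combined with the lower-semi-analytic structure of the iterates and the measurable-selection machinery of the universal measurability framework, this stabilization is what allows the integral, and then the infimum over $u$, to be moved through the supremum and yields $T(W) \le \sup_{\xi<\omega_1} T(\T^\xi(\0))$. The delicate technical point I expect to wrestle with is reconciling the \emph{pointwise} stabilization ordinals --- which may be cofinal in $\omega_1$ --- with the requirement that the integral identity hold $q(\cdot \mid x,u)$-almost everywhere and uniformly enough in $u$ to survive the infimum; this is the heart of the adaptation of the Maitra--Sudderth argument to the (GC) operator.
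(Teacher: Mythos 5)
Your overall architecture is exactly the paper's: the upper bound $\T^{\omega_1}(\0)\le J^*$ via monotonicity of the transfinite iterates together with $\T(J^*)=J^*$ and $J^*\ge 0$, and the lower bound by showing $W:=\T^{\omega_1}(\0)$ satisfies $W\ge 0$ and $W\ge T(W)$ and then invoking Prop.~\ref{prp-relate2P}. The paper isolates precisely the facts you need as Lemma~\ref{lma-rel2P}: $W$ is lower semi-analytic and $W=\T(W)\ge T(W)$. The problem is that your proposed route to these facts does not close.

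The gap is in the mechanism you offer for passing $T$ through $\sup_{\xi<\omega_1}$. Eventual constancy of the nondecreasing $\omega_1$-sequence $\xi\mapsto \int_S \T^\xi(\0)\,dq(\cdot\mid x,u)$ at some countable ordinal $\xi_0=\xi_0(x,u)$ only tells you that $\T^\eta(\0)=\T^{\xi_0}(\0)$ holds $q(\cdot\mid x,u)$-a.e.\ for each $\eta>\xi_0$ \emph{separately}; the set $\{W>\T^{\xi_0}(\0)\}$ is an increasing union of uncountably many such null sets and can have positive outer measure (under CH, the initial segments of a well-ordering of $[0,1]$ in order type $\omega_1$ are all Lebesgue-null yet union to $[0,1]$). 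So the interchange $\int_S W\,dq \le \sup_\xi \int_S \T^\xi(\0)\,dq$ already fails at the level of a single integral, before the infimum over $u$ even enters. This is exactly why the footnote after Theorem~\ref{thm-relate2P} singles out the case where all kernels are dominated by a single $\rho$ --- there the null sets can be controlled and one even gets stabilization at a countable ordinal --- but in general there is no dominating measure. A prior issue, which your sketch does not address, is that $W$ is a pointwise supremum of \emph{uncountably} many functions and so is not obviously universally measurable; without that, $\int_S W\,dq$ and the hypothesis $J\in A(S)$ of Prop.~\ref{prp-relate2P} are not even available. The paper's Lemma~\ref{lma-rel2P} (Appendix~\ref{appendix3-2}) resolves both points at once by an entirely different device, following \cite{MS92}: it encodes $\T$ as a set operator $\Psi$ on subsets of $S\times[0,+\infty]$ acting on epigraphs, defined via \emph{outer} measures so that it makes sense on arbitrary, possibly non-measurable, sets; verifies that $\Psi$ is monotone, contracting and uniformly analytic; and invokes a Moschovakis-type theorem asserting that the transfinite iteration of such an operator from an analytic set produces an analytic fixed point at stage $\omega_1$. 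Analyticity of $\epi(W)$ then gives lower semi-analyticity of $W$, and $\Psi(\epi(W))=\epi(W)$ gives $\T(W)=W$. To complete your proof you should either import that lemma wholesale or restrict to the dominated-kernel setting in which your stabilization argument is actually valid.
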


\begin{proof}
Denote $J_{\omega_1} = \T^{\omega_1}(\0)$. We use a critical lemma, Lemma~\ref{lma-rel2P}, given in Appendix~\ref{appendix3-2}, which states that the function $J_{\omega_1}$ is lower semi-analytic and satisfies 
$$J_{\omega_1} = \T(J_{\omega_1}) \geq T(J_{\omega_1}).$$
Combining this lemma with Prop.~\ref{prp-relate2P} and the assumption $J^* \geq 0$, we can obtain \cref{thm-relate2P} immediately as follows. 
Since $J^* \geq 0$, using the definition of transfinite value iteration and the monotonicity of $T$, and using also the fact $J^* = T(J^*)$, we have
$$J_{\omega_1}  = \T^{\omega_1}(\0) \leq \T^{\omega_1}(J^*) = J^*.$$
On the other hand, by the nondecreasing property of the transfinite value iterates, we have $J_{\omega_1} = \T^{\omega_1}(\0) \geq \T^0(\0) = \0$, 
and by \cref{lma-rel2P}, we also have $J_{\omega_1} \geq T(J_{\omega_1})$ and $J_{\omega_1}$ is lower semi-analytic. So applying Prop.~\ref{prp-relate2P} with $J = J_{\omega_1}$, we have $J_{\omega_1} \geq J^*$, and therefore, we must have $J_{\omega_1} = J^*$.
\end{proof}

We mention that if the state and control spaces $S, C$ are countable, then Theorem~\ref{thm-relate2P} holds with $\T^{\xi}(\0) = J^*$ for some countable ordinal $\xi < \omega_1$. This follows from the proof of \cite[Theorem 5.1]{MS92}.
\footnote{In fact, as can be verified from its proof, the conclusion of \cite[Theorem 5.1]{MS92} also holds in our case when $S$ or $C$ is uncountably infinite. Specifically, we have that $\T^{\xi}(\0) = J^*$ for some $\xi < \omega_1$ if $J^* \geq 0$ and if there exists a probability measure $\rho \in \P(S)$ such that for every $(x, u) \in S \times C$, $q(dx' \!\mid x, u)$ is absolutely continuous with respect to $\rho$.}

If $S$ is finite, then actually the sequence $\{\T^n(\0)\}$ converges to $J^*$ if $J^*$ is nonnegative and real-valued. We prove this result below. It is similar to \cite[Cor.\ 5.1]{YuB-mvipi}, and the counterexample given immediately after \cite[Cor.\ 5.1]{YuB-mvipi} in that paper for the (P) model is also applicable here and shows that the desired convergence need not hold if $J^*$ can take the value $+\infty$. (For uncountable $S$, the proof given below is applicable if $\{\T^n(\0)\}$ can be shown to converge uniformly.)

\begin{prop}[Convergence of value iteration for finite $S$] \label{prp-vi-pos1}
Suppose the state space $S$ is finite. If $J^* \geq 0$ and $J^*$ is real-valued, then $\T^n(\0) \to J^*$.
\end{prop}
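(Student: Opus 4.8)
The plan is to identify the nondecreasing pointwise limit of the ordinary iterates $\T^n(\0)$ and show it is a fixed point of $T$ from above, so that Prop.~\ref{prp-relate2P} applies. Write $J_n := \T^n(\0)$. First I would record the elementary facts that $\0 \le J_n \le J_{n+1} \le J^*$ for every $n$: the left and middle inequalities hold because $\T(J) = \max\{J, T(J)\} \ge J$ and $\T(\0) \ge \0$, while $J_n \le J^*$ follows by induction from the monotonicity of $\T$ together with $\T(J^*) = \max\{J^*, T(J^*)\} = J^*$ (using $J^* = T(J^*)$ from Theorem~\ref{thm-m1}(b) and $J^* \ge 0$). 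Each $J_n$ is moreover real-valued: $T(J_n) \le T(J^*) = J^*$ on one side and $T(J_n) \ge T(\0) > -\infty$ (by the (GC) condition) on the other, so $J_{n+1} = \max\{J_n, T(J_n)\}$ stays finite. Since $S$ is finite and $J^*$ is real-valued, $J^*$ is in fact bounded, hence so are all the $J_n$, and no $\infty - \infty$ ambiguity arises in the integrals below.

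Let $\bar J := \lim_{n\to\infty} J_n = \sup_n J_n$. As a countable supremum of the lower semi-analytic functions $\T^n(\0)$, the function $\bar J$ lies in $A(S)$, and $0 \le \bar J \le J^*$, so $\bar J$ is bounded and nonnegative. The crux is to establish $T(\bar J) \le \bar J$, and this is exactly where finiteness of $S$ is used: because $J_n(x) \uparrow \bar J(x)$ for each of the finitely many states $x$, the convergence is in fact uniform, i.e.\ $\epsilon_n := \max_{x \in S}\big(\bar J(x) - J_n(x)\big) \downarrow 0$.

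The key estimate then proceeds as follows. For any state $x$ and any feasible $u \in U(x)$, since $\bar J - J_n \le \epsilon_n$ and $q(\cdot \mid x,u)$ is a probability measure,
$$ \int_S \bar J(x') \, q(dx' \mid x, u) \ = \ \int_S J_n(x') \, q(dx' \mid x, u) + \int_S \big(\bar J - J_n\big)(x') \, q(dx' \mid x, u) \ \le \ \int_S J_n(x') \, q(dx' \mid x, u) + \epsilon_n. $$
Adding $g(x,u)$ and taking the infimum over $u \in U(x)$ gives $T(\bar J)(x) \le T(J_n)(x) + \epsilon_n$. Since $T(J_n) \le \max\{J_n, T(J_n)\} = \T(J_n) = J_{n+1} \le \bar J$, this yields $T(\bar J) \le \bar J + \epsilon_n$ for all $n$; letting $n \to \infty$ and using $\epsilon_n \to 0$ gives $T(\bar J) \le \bar J$.

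Finally I would apply Prop.~\ref{prp-relate2P} to $\bar J$: as $\bar J \in A(S)$, $\bar J \ge 0$, and $\bar J \ge T(\bar J)$, we conclude $\bar J \ge J^*$; combined with $\bar J \le J^*$ this forces $\bar J = J^*$, that is, $\T^n(\0) \uparrow J^*$. The main obstacle is the interchange of the control-infimum with the limit in $n$ that is hidden in the step $T(\bar J) \le T(J_n) + \epsilon_n$; this interchange can fail for general state spaces, and it is rescued here precisely by the uniform convergence that finiteness of $S$ provides, which is also why the remark preceding the proposition notes that uniform convergence alone suffices for uncountable $S$.
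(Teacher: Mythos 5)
Your proposal is correct and follows essentially the same route as the paper's proof: both pass to the monotone limit $\bar J = \sup_n \T^n(\0) \leq J^*$, use finiteness of $S$ to upgrade pointwise to uniform convergence, exploit the resulting $\epsilon$-perturbation bound to conclude $T(\bar J) \leq \bar J$, and then invoke Prop.~\ref{prp-relate2P} to get $\bar J \geq J^*$. The only cosmetic difference is that you carry out the $\epsilon$-estimate directly on $T$ via the integral, whereas the paper phrases it as $\T(\bar J) \leq \T(\T^n(\0)+\epsilon) \leq \T^{n+1}(\0)+\epsilon$; the content is identical.
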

\begin{proof}
By the definition of $\T$, the sequence $\{\T^n(\0)\}$ is nondecreasing and converges monotonically to some limit $\bar J \geq 0$. Since $J^* \geq 0$, using the monotonicity of $\T$ and the fact $J^* = \T(J^*)$, we have $\bar J \leq J^*$, so $\bar J < + \infty$, in view of the assumption $J^* < + \infty$. Then, since $S$ is finite and $\bar J$ is real-valued, the convergence $\T^n(\0) \to \bar J$ must be uniform; consequently, given any $\epsilon > 0$, $\bar J \leq \T^n(\0) + \epsilon$ for all $n$ sufficiently large. Using this and the definition of $\T$, we obtain for any given $\epsilon > 0$ that for all $n$ sufficiently large,
$$ \T\big(\bar J \big) \leq \T \big( \T^n(\0) + \epsilon \big) \leq \T^{n+1}(\0) + \epsilon.$$
This implies $\T(\bar J) \leq \bar J$; hence by Prop.~\ref{prp-relate2P}, $\bar J \geq J^*$. But $\bar J \leq J^*$ as discussed earlier, so $\bar J = J^*$. 
\end{proof}

We have used $\T$ instead of $T$ to define transfinite value iteration for the (GC) model. It is not hard to verify that the transfinite value iteration defined by Eq.~(\ref{eq-def-transvip}) is essentially equivalent to the iteration (\ref{eq-transvi0})-(\ref{eq-transvi1}), and therefore, by Theorem~\ref{thm-relate2P}, $T^{\omega_1}(\0) = J^*$ if $J^* \geq 0$. However, if instead we define transfinite value iteration more similarly to ordinal value iteration as 
$$  \hat{J}_1 = T(\0), \qquad \hat{J}_\xi = T\Big(\sup_{1 \leq \eta < \xi} \hat{J}_\eta \Big), \ \ 1 < \xi < \omega_1, \qquad \hat{J}_{\omega_1} = \sup_{ 1 \leq \xi < \omega_1} \hat{J}_\xi,$$
would we still have $\hat{J}_{\omega_1} = J^*$ if $J^* \geq 0$? 
The next example shows that it is not true in general, even if $S$, $C$ are finite.
\footnote{Note, however, that finite state and control MDP under the total cost criterion can be satisfactorily solved using a policy iteration algorithm based on the sensitive optimality concept \cite[Chap.\ 10.4]{puterman94}.}
This example is a slight modification of \cite[Example 3.2]{vdW81} (cf.\ \cite[Example 6.10]{Fein02}).

\begin{example} \label{ex-rel2P} \rm
Let $S=\{0,1,2\}$. State $0$ is cost-free and absorbing. From state $1$, the system moves to state $0$ deterministically, with cost $1$. State $2$ has two controls: under one control the system remains at state $2$ with no cost, and under the other the system moves to state $1$ with cost $-1$. The optimal costs at these states are $J^*(0)=0, J^*(1) = 1, J^*(2) = 0$, so $J^* \geq 0$. However, for state~$2$, $\hat{J}_{\omega_1}(2) = -1$, since for all $1 \leq \xi < \omega_1$, $\hat{J}_{\xi}(2)=-1$. Indeed, the function $\hat{J}_1=T(\0)$, which has values $0,1,-1$ for states $0,1,2$, respectively, is a fixed point of $T$. But it is easy to see that $\T(\0) = J^*$. 
\qed
\end{example}

The next corollary follows from Theorem~\ref{thm-relate2P}. 
(Note that $J_\infty \in A(S)$ by \cite[Lemma 7.30(2)]{bs}.)

\begin{cor}
If $J_\infty \geq 0$, 
then $\T^{\omega_1}(J_\infty)  = J^*$.
\end{cor}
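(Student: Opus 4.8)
The plan is to reduce the statement to Theorem~\ref{thm-relate2P} by a monotonicity sandwich. First I would note that the hypothesis $J_\infty \geq 0$, combined with the inequality $J_\infty \leq J^*$ established in~(\ref{eq-ineq0}), forces $J^* \geq 0$. Hence Theorem~\ref{thm-relate2P} applies and gives $\T^{\omega_1}(\0) = J^*$. This is the key observation: the assumption on $J_\infty$ is exactly what is needed to put us back into the regime $J^* \geq 0$ where transfinite value iteration from $\0$ is already known to converge.

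Next I would exploit the pointwise ordering $\0 \leq J_\infty \leq J^*$, which holds by the hypothesis and by~(\ref{eq-ineq0}). Applying the transfinite iteration to all three functions and using that $\T^{\omega_1}$ is monotone in its starting function, together with the already-noted fixed-point identity $\T^{\omega_1}(J^*) = J^*$, I would obtain the chain
$$ J^* = \T^{\omega_1}(\0) \leq \T^{\omega_1}(J_\infty) \leq \T^{\omega_1}(J^*) = J^*, $$
which pins down $\T^{\omega_1}(J_\infty) = J^*$ and completes the argument.

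The only step requiring care---and the one I would treat as the main obstacle---is justifying that the transfinite iterates $\T^\xi$ are monotone in the initial function, i.e.\ that $J \leq J'$ implies $\T^\xi(J) \leq \T^\xi(J')$ for every ordinal $\xi \leq \omega_1$. I would establish this by transfinite induction: the base case $\xi = 0$ is immediate; for $0 < \xi < \omega_1$ the recursive definition~(\ref{eq-transvi0}) reduces the step to monotonicity of the operator $\T(\cdot) = \max\{\,\cdot\,, T(\cdot)\}$ (inherited from monotonicity of $T$) applied to the pointwise supremum $\sup_{\eta < \xi}$, which preserves the inequality by the induction hypothesis; and the limit stage~(\ref{eq-transvi1}) is again a pointwise supremum. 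Since the iterates remain in $A(S)$ by the same recursion principle invoked for the other transfinite sequences, no measurability difficulties arise, and the sandwich above is valid.
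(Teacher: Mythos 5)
Your proposal is correct and follows essentially the same route as the paper: deduce $J^* \geq 0$ from $0 \leq J_\infty \leq J^*$, invoke Theorem~\ref{thm-relate2P} for $\T^{\omega_1}(\0) = J^*$, and sandwich $\T^{\omega_1}(J_\infty)$ between $\T^{\omega_1}(\0)$ and $\T^{\omega_1}(J^*) = J^*$ using monotonicity of the transfinite iteration in its starting function. The only difference is that you spell out the transfinite-induction argument for that monotonicity, which the paper simply asserts; your justification is sound.
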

\begin{proof} 
Since $J^* \geq J_\infty \geq 0$, using Theorem~\ref{thm-relate2P} and the monotonicity of transfinite value iteration, we have that $J^* = \T^{\omega_1}(\0) \leq \T^{\omega_1}(J_\infty) \leq J^*$.
This shows $\T^{\omega_1}(J_\infty) = J^*$. 
\end{proof}

Finally, for the case $J^* \geq 0$, we prove the convergence of the sequence $\{\T^n(\0)\}$ when the control sets are finite. This result can be compared with the convergence of $T^n(\0)$ in positive costs problems with finite controls (cf.\ \cite{b77}, \cite[Prop.\ 9.18(P)]{bs}). 

\begin{prop}[Convergence of value iteration in the case of finite controls] \label{prp-vi-pos2}
Suppose that the control set $U(x)$ is finite for each $x \in S$. Then $\T^n(\0) \to J^*$ if $J^* \geq 0$.
\end{prop}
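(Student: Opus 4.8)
The plan is to exploit the monotone increasing nature of $\{\T^n(\0)\}$ together with the finiteness of the control sets, reducing the problem to an application of Proposition~\ref{prp-relate2P}. First I would observe that since $\T(J) \geq J$ always holds and $\T^0(\0) = \0 \geq 0$, the sequence $\{\T^n(\0)\}$ is nondecreasing and bounded below by $\0$, hence converges pointwise to some $\bar J \geq 0$. As $\bar J = \sup_n \T^n(\0)$ is the pointwise supremum of countably many lower semi-analytic functions, $\bar J \in A(S)$ by \cite[Lemma 7.30]{bs}. Moreover, since $J^* \geq 0 = \T^0(\0)$ and $\T$ is monotone with $\T(J^*) = J^*$, we get $\T^n(\0) \leq \T^n(J^*) = J^*$ for all $n$, so $\bar J \leq J^*$. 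It then remains to prove the reverse inequality $\bar J \geq J^*$, for which it suffices, by Proposition~\ref{prp-relate2P}, to verify that $\bar J \geq T(\bar J)$ (we already have $\bar J \in A(S)$ and $\bar J \geq 0$).

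The heart of the argument, and the step where finiteness of $U(x)$ is essential, is the interchange of $T$ with the monotone limit: I would show that $T\big(\T^n(\0)\big) \uparrow T(\bar J)$ pointwise. Fix a state $x$ and write $J_n := \T^n(\0)$. For each fixed control $u \in U(x)$, the map $n \mapsto g(x,u) + \int_S J_n(x') \, q(dx'\!\mid x,u)$ is nondecreasing (since $J_n \geq 0$ increases to $\bar J$, the monotone convergence theorem gives $\int_S J_n \, q(dx'\!\mid x,u) \uparrow \int_S \bar J \, q(dx'\!\mid x,u)$) and converges to $g(x,u) + \int_S \bar J \, q(dx'\!\mid x,u)$. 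Because $U(x)$ is finite, the infimum defining $T$ is attained, and a pigeonhole argument shows that the minimum over $U(x)$ of these nondecreasing quantities also increases to the minimum of the limits: if $u_n$ denotes a minimizing control at stage $n$, then some control $u^*$ is a minimizer for infinitely many $n$, and along that subsequence the values converge both to $\lim_n T(J_n)(x)$ and to $g(x,u^*) + \int_S \bar J \, q(dx'\!\mid x,u^*) \geq T(\bar J)(x)$; together with the easy bound $T(J_n)(x) \leq T(\bar J)(x)$ from monotonicity of $T$, this forces $\lim_n T(J_n)(x) = T(\bar J)(x)$.

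With this interchange in hand the conclusion is immediate: since $T(J_n) \leq \max\{J_n, T(J_n)\} = \T^{n+1}(\0) \leq \bar J$ for every $n$, passing to the limit yields $T(\bar J) = \lim_n T(J_n) \leq \bar J$. Thus $\bar J \in A(S)$, $\bar J \geq 0$, and $\bar J \geq T(\bar J)$, so Proposition~\ref{prp-relate2P} gives $\bar J \geq J^*$; combined with $\bar J \leq J^*$ this yields $\T^n(\0) \to \bar J = J^*$.

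The main obstacle is precisely the min--limit interchange $T(\T^n(\0)) \uparrow T(\bar J)$, which can fail for infinite control sets (indeed $T^n(\0) \not\to J^*$ examples in the literature are of exactly this nature). It is the finiteness of $U(x)$ that both guarantees the infimum defining $T$ is attained and powers the pigeonhole step; everything else is a routine combination of monotonicity, the increasing property of $\T$, and the measurability closure of $A(S)$ under countable suprema. Notably, unlike the finite-state case of Proposition~\ref{prp-vi-pos1}, this argument does not require uniform convergence, so it applies on an arbitrary (possibly uncountable) state space.
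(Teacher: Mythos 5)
Your proposal is correct and follows essentially the same route as the paper: both establish that $J_n = \T^n(\0) \uparrow \bar J$ with $\0 \leq \bar J \leq J^*$, use the finiteness of $U(x)$ to extract (by pigeonhole) a single control that attains the minimum in $T(J_{n_k})(x)$ along a subsequence, pass to the limit via the monotone convergence theorem to conclude $\bar J \geq T(\bar J)$, and then invoke Proposition~\ref{prp-relate2P}. The only cosmetic difference is that you package the key step as the full interchange $T(J_n)(x) \uparrow T(\bar J)(x)$, whereas the paper extracts only the one inequality $\bar J(x) \geq T(\bar J)(x)$ it actually needs.
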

\begin{proof}
Denote $J_n = \T^n(\0)$ with $J_0=\0$. Similarly to the proof of Prop.~\ref{prp-vi-pos1}, we have that $J_n \geq 0$ and $J_n \uparrow \bar J$ for some nonnegative function $\bar J \in A(S)$ with $\bar J \leq J^*$.  Consider an arbitrary state $x$. Since $U(x)$ is finite, there exist a control $\bar u \in U(x)$ and a subsequence $\{n_k\}$ of positive integers such that for all $k \geq 1$, $\bar u$ is the control that attains the minimum in $T\big(J_{n_k}\big)(x)$, i.e.,
\begin{equation} \label{eq-prf-pos2a}
  J_{n_k+1}(x) = \T\big( J_{n_k} \big)(x) =  \max\big\{ J_{n_k}(x)  , \, T\big(J_{n_k}\big)(x) \big\} = \max \big\{ J_{n_k}(x)  , \, H \big(x, \bar u, J_{n_k}) \big\},
\end{equation} 
where
$$ H \big(x, \bar u, J_{n_k}) : = g(x, \bar u) + \int_S J_{n_k}(x') \, q(dx'\!\mid x, \bar u) = \inf_{u \in U(x)}  \left\{ g(x, u) +  \int_S J_{n_k}(x') \, q(dx'\!\mid x, u) \right\}.$$
Since $J_{n_k} \geq 0$ and $J_{n_k} \uparrow \bar J$, we have $\lim_{k \to \infty} \int_S J_{n_k}(x') \, q(dx'\!\mid x, \bar u)  = \int_S \bar J(x') \, q(dx'\!\mid x, \bar u)$ by the monotone convergence theorem. Hence, from Eq.~(\ref{eq-prf-pos2a}), by letting $k$ go to $+\infty$ and then applying the inequality $g(x, \bar u) + \int_S \bar J(x') \, q(dx'\!\mid x, \bar u) \geq T(\bar J)(x)$,
we have
$$ \bar J(x)  \geq \max \big\{ \bar J(x) , \, T\big(\bar J\big) (x) \big\} \quad \Longrightarrow \quad \bar J(x) \geq T\big(\bar J\big)(x).$$
Since $x$ is arbitrary, this proves $\bar J \geq T(\bar J)$, which together with the fact $\bar J \geq 0$ implies $\bar J \geq J^*$ by Prop.~\ref{prp-relate2P}. But $\bar J \leq J^*$ as we showed earlier, so $\bar J = J^*$.
\end{proof}

Proposition~\ref{prp-vi-pos2} gives us a practical way of computing $J^*$ for finite control problems when we can infer $J^* \geq 0$ from the problem data relatively easily without intensive computation. We also note that the iteration $J_{n+1} = \T(J_n)$ with $J_0=\0$ can be equivalently written as the slightly simpler iteration $J_{n+1} = \max\{ \0,\, T(J_n)\}$, as mentioned earlier in Eq.~(\ref{eq-tT-alt}) and can be verified by induction. 

\subsection{The General Case} \label{sec4.2}

We now consider the convergence of $T^n(\0)$ in the general case where $J^*$ can take both positive and negative values.
We will formulate sufficient convergence conditions, using some ideas from Whittle's bridging condition for the positive costs model (Whittle~\cite{Whit79}, Hartley~\cite{Har80}). Our conditions can characterize convergence of $T^n(\0)$ on parts of the state space, so they can be useful even if on the full state space, $T^n(\0) \not\to J^*$. These conditions can be viewed as generalizations of the bridging condition from the positive model to the more general (GC) model, and we will discuss this connection after giving our convergence results (see Remark~\ref{rmk-bridging} in Section~\ref{sec4.2-discussion}).

We start by introducing a few mappings, function spaces and inequalities, which we will need in analyzing the convergence of $\{T^n(\0)\}$ and bounding these iterates from below.

\subsubsection{Preliminaries}

 Define two dynamic programming operators $T_+ : A(S) \to A(S)$, $T_-: A(S) \to A(S)$, corresponding to the positive and negative parts of $g$, respectively, as follows: for any $J \in A(S)$,
\begin{align*}
  T_+(J)(x) & : = \inf_{u \in U(x)} \left\{ g_+(x,u) + \int_S J(x') \, q(d x' \!\mid x, u) \right\}, \qquad x \in S, \\
  T_-(J)(x) & : = \inf_{u \in U(x)} \left\{ - g_-(x,u) + \int_S J(x') \, q(d x' \!\mid x, u) \right\}, \qquad x \in S.
\end{align*}  
Also define $T_0: A(S) \to A(S)$ by
\begin{equation}
T_0(J)(x) : = \inf_{u \in U(x)}  \int_S J(x') \, q(d x' \!\mid x, u), \qquad x \in S, \ \forall \, J \in A(S). \notag
\end{equation}
We may treat $\infty - \infty = \infty$ in the above definitions. However, for the inequalities we will need, we will focus on a smaller set of functions, $A_0(S) \subset A(S)$, so that $\infty - \infty$ does not occur when applying any of the above mappings to a function in the set. We define this set as follows.

Let $A_0(S)$ be a subset of lower semi-analytic functions given by
\begin{align}
   A_0(S) : = \left\{ J \in A(S) \ \Big| \ J > - \infty, \ \inf_{n \geq 1} \, T_0^n\big( \min\{J, 0\} \big) > - \infty \,  \right\},
\end{align}
where the infimum over $n$ is taken pointwise.
The set $A_0(S)$ has the desired property that if $J_1, J_2 \in A_0(S)$, then in the calculation of $J_1 + J_2$ or
the integral $\int_S \big( J_1(x') + J_2(x') \big) \, q(d x' \!\mid x, u)$, we never encounter $\infty - \infty$ or $-\infty + \infty$, and moreover, with $J = J_1 + J_2$, the linearity property of expectation can be taken for granted:
\footnote{This is not the case if $J_1$ or $J_2$ is not in the set $A_0(S)$. As an example, suppose $J_1 = f$ and $J_2= -f$, where $\int_S f(x') \, q(d x' \!\mid x, u) = \infty$, so that $J_2 \not\in A_0(S)$. Then $0 = f - f$, but 
$$ 0 = \int_S 0 \cdot q(d x' \!\mid x, u) \not= \int_S f(x')  \, q(d x' \!\mid x, u) + \int_S - f(x') \, q(d x' \!\mid x, u) = + \infty - \infty = + \infty.$$
The definition of $A_0(S)$ rules out such cases.}
$$  \int_S J(x') \, q(d x' \!\mid x, u) = \int_S J_1(x')  \, q(d x' \!\mid x, u) + \int_S J_2(x') \, q(d x' \!\mid x, u). $$

The set $A_0(S)$ contains all real-valued constant functions and all nonnegative lower semi-analytic functions. 
Under the (GC) condition~(\ref{eq-tmodel}), $A_0(S)$ also contains the following set of functions:
\begin{equation} \label{eq-subsetA0}
    \{ J \in A(S) \, \mid \,  J \geq f \}, \qquad \text{where} \ f(x) = - \sup_{\pi \in \Pi} J_\pi^-(x), \ \ x \in S, 
\end{equation}    
because the nonpositive function $f \in A(S)$ satisfies that $T_0(f) \geq T_-(f) = f > -\infty$ under the condition~(\ref{eq-tmodel}).
\footnote{We have $f \in A(S)$ and $T_-(f) = f$ because $f$ is the optimal cost function of the nonpositive total cost problem with one-stage cost function $-g_-$.}
In particular, $J^*, J_\infty \in A_0(S)$.

It can be verified directly that the set $A_0(S)$ is closed under addition and positive scaling: if $J_1, J_2 \in A_0(S)$, then $c_1 J_1 + c_2 J_2 \in A_0(S)$ for any scalars $c_1, c_2 \geq 0$. It can also be verified that the set $A_0(S)$ is closed under the mappings $T$ and $T_+, T_-, T_0$ defined above, under the (GC) condition~(\ref{eq-tmodel}).
\footnote{Denote $\lceil J \rceil^0 = \min \{ J, 0\}$ for any function $J$. The set $A_0(S)$ consists of those functions $J \in A(S)$ such that all the functions $T^n_0(\lceil J \rceil^0), n = 0, 1, \ldots$, lie above some function $f_J$ which does not take the value $-\infty$.  This set $A_0(S)$ is by definition closed under the mapping $T_0$, and hence, since $T_+(J) \geq T_0(J)$ for any function $J$, $A_0(S)$ is also closed under $T_+$. Similarly, since $T(J) \geq T_-(J)$, if $A_0(S)$ is closed under $T_-$, then it must also be closed under $T$. To show that $A_0(S)$ is closed under $T_-$, we use the (GC) condition~(\ref{eq-tmodel}) as follows. 
Let $f \leq 0$ be the function given in Eq.~(\ref{eq-subsetA0}). We have $f > - \infty$ under the condition~(\ref{eq-tmodel}) and $T_0(f) \geq T_-(f) = f$. 
Then for $J \in A_0(S)$, by a direct calculation, we have 
$$ T_-(J) \geq f+ T_0\big(\lceil J \rceil^0 \big) > - \infty, \qquad T^n_0\big(\lceil T_-(J) \rceil^0 \big) \geq  f + T_0^{n+1}\big(\lceil J \rceil^0 \big) \geq f + f_J > - \infty , \quad \forall \, n \geq 1,$$ 
so $T_-(J) \in A_0(S)$. This shows that the set $A_0(S)$ is closed under the mappings $T_-$ and $T$.}

For functions in $A_0(S)$, the inequalities given in the following lemma can be verified by a direct calculation, using the definitions of the mappings $T, T_+, T_-, T_0$ and the monotonicity of $T$. The inequality (\ref{eq-ud-ineq2b}) below shows a concavity property of $T$.

\begin{lem} \label{lma-ineqs}
Let $J_1, J_2 \in A_0(S)$. Then
\begin{equation}
  T^n(J_1 + J_2) \geq T_+^n(J_1) + T_-^n(J_2), \qquad T^n(J_1 + J_2) \geq T^n(J_1) + T_0^n(J_2), \qquad n \geq 1, \label{eq-ud-ineq1b}
\end{equation}  
and for any $\alpha \in [0,1]$,
\begin{equation}
T^n\big( \alpha J_1 + ( 1 - \alpha) J_2 \big) \geq \alpha \, T^n(J_1) + (1 - \alpha) \, T^n(J_2), \qquad n \geq 1. \label{eq-ud-ineq2b}
\end{equation} 
\end{lem}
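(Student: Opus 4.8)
The plan is to establish each inequality first for $n=1$ by exploiting the superadditivity of the infimum over controls, and then to lift the one-step bounds to general $n$ by a uniform induction, using the monotonicity of $T$ together with the closure of $A_0(S)$ under the operators $T, T_+, T_-, T_0$ established above. All three inequalities share this two-layer structure, so the work reduces to checking the base cases and one inductive step.

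For the base case of the first pair, I would write $g = g_+ - g_-$ and, since $J_1, J_2 \in A_0(S)$, split the integral of $J_1 + J_2$ linearly to obtain
$$T(J_1+J_2)(x) = \inf_{u \in U(x)} \Big\{ \big(g_+(x,u) + \int_S J_1(x')\, q(dx'\!\mid x,u)\big) + \big(-g_-(x,u) + \int_S J_2(x')\, q(dx'\!\mid x,u)\big)\Big\}.$$
Applying $\inf_u\big(a(u)+b(u)\big) \geq \inf_u a(u) + \inf_u b(u)$ then gives $T(J_1+J_2) \geq T_+(J_1) + T_-(J_2)$; grouping the full cost $g$ with $J_1$ and leaving the cost-free integral of $J_2$ separate instead yields $T(J_1+J_2) \geq T(J_1) + T_0(J_2)$. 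For the concavity bound, I would use $g = \alpha g + (1-\alpha)g$ and the fact that scaling by a nonnegative constant passes through the infimum, so that $T\big(\alpha J_1 + (1-\alpha)J_2\big) \geq \alpha\, T(J_1) + (1-\alpha)\, T(J_2)$.

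The inductive step is identical in all three cases. Writing $T^{n+1} = T \circ T^n$, I would insert the induction hypothesis inside $T$ and invoke the monotonicity of $T$ to pass to the right-hand side; the one-step inequality, applied to the iterates $T_+^n(J_1), T_-^n(J_2)$ (respectively $T^n(J_1), T_0^n(J_2)$, or $T^n(J_1), T^n(J_2)$), then closes the recursion. This is legitimate precisely because $A_0(S)$ is closed under $T, T_+, T_-, T_0$ and under nonnegative linear combinations, so every function to which the one-step bound is reapplied again lies in $A_0(S)$.

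The only genuine obstacle is arithmetic well-definedness: the splittings above are valid only if no $\infty - \infty$ arises when separating sums and integrals, and only if the relevant iterates remain in $A_0(S)$. Both points are exactly what the definition of $A_0(S)$ and its closure properties guarantee, so I would flag explicitly, at each use of the linearity of the integral, that the functions involved belong to $A_0(S)$; with that bookkeeping in place the remaining manipulations are routine.
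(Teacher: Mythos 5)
Your proof is correct and follows essentially the route the paper intends: the paper gives no detailed argument, stating only that the inequalities "can be verified by a direct calculation, using the definitions of the mappings $T, T_+, T_-, T_0$ and the monotonicity of $T$," and your one-step superadditivity/concavity bounds plus induction via monotonicity and the closure of $A_0(S)$ under these operators is exactly that calculation, with the $\infty-\infty$ bookkeeping correctly delegated to the definition of $A_0(S)$.
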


\subsubsection{Convergence Results}

We give two convergence theorems below. 
They concern the convergence of $T^n(\0)(x)$ to $J^*(x)$ for \emph{some} states $x$. We will discuss their connection to Whittle's bridging condition and also derive several corollaries in the subsequent Section~\ref{sec4.2-discussion}, and we will then use examples to illustrate these theorems in Section~\ref{sec5}.

\begin{thm}[Partial convergence of value iteration] \label{thm-vi-partial}
Suppose that
\begin{equation}
   T^{\bar n}(\0)  \geq \alpha J^* + \phi \ \ \  \text{for some } \phi \in A_0(S), \ \bar n \geq 0,  \ \alpha \in (0,1].  \label{cond-thm2}
\end{equation}   
Then 
$$ \lim_{n \to \infty} T^n(\0)(x) = J^*(x), \quad  \forall \, x \in S_{0^+}: = \Big\{ x \in S \, \Big| \,  \limsup_{n \to \infty} \, \sup_{\pi \in \Pi} \E^\pi_x \{ - \phi(x_n) \} \leq 0 \Big\}.$$
In addition, if $\bar n = 0$, then 
$$ \limsup_{n \to \infty} \, T^n(\0)(x) = J^*(x), \quad \forall \, x \in S_0 : = \Big\{ x \in S \, \Big| \,  \liminf_{n \to \infty} \, \sup_{\pi \in \Pi} \E^\pi_x \{ - \phi(x_n) \} \leq 0 \Big\}; $$
if $\bar n \not=0$, then for every $x \in S_0$ such that $T^n(\0)(x)$ converges, $\lim_{n \to \infty} \, T^n(\0)(x) = J^*(x)$.
\end{thm}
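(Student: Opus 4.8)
The plan is to reduce every assertion to a single pointwise lower bound on the iterates $J_n := T^n(\0)$, and then read off each case by elementary $\liminf$/$\limsup$ manipulations. First I would record the a~priori lower bound $J_n \geq f > -\infty$, where $f(x) = -\sup_{\pi} J_\pi^-(x) \leq 0$ is the nonpositive function from \eqref{eq-subsetA0}: since $\0 \geq f$ and $T(f) \geq T_-(f) = f$, monotonicity of $T$ gives $J_n \geq T^n(f) \geq f$. This keeps $J_n > -\infty$ and, together with the membership facts for $A_0(S)$, guarantees that the sums appearing below are free of $\infty-\infty$. Combined with \eqref{eq-ineq0} we already have $\limsup_n J_n \leq J^*$, so for each fixed $x$ it only remains to bound $\liminf_n J_n(x)$ (or the relevant subsequential limit) from below by $J^*(x)$.

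The heart of the plan is to establish the bound
\[
 T^{n+\bar n}(\0) \ \geq \ \alpha J^* + (1-\alpha)\, T^n(\0) + T_0^n(\phi), \qquad n \geq 1.
\]
I would obtain it as follows: apply $T^n$ to the hypothesis \eqref{cond-thm2} (monotonicity of $T$) to get $T^{n+\bar n}(\0) \geq T^n(\alpha J^* + \phi)$; split $\alpha J^* + \phi$ with the second inequality of \eqref{eq-ud-ineq1b} in Lemma~\ref{lma-ineqs} (both $\alpha J^*$ and $\phi$ lie in $A_0(S)$) to get $T^n(\alpha J^*) + T_0^n(\phi)$; and finally write $\alpha J^* = \alpha J^* + (1-\alpha)\0$ and invoke the concavity inequality \eqref{eq-ud-ineq2b} together with $T^n(J^*)=J^*$ (Theorem~\ref{thm-m1}(b)) to conclude $T^n(\alpha J^*) \geq \alpha J^* + (1-\alpha) T^n(\0)$. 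Chaining these yields the display; the $A_0(S)$ membership of $J^*$, $\0$, and $\phi$ is precisely what licenses each use of Lemma~\ref{lma-ineqs}.

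Next I would translate the two index sets using the finite-horizon dynamic-programming representation $T_0^n(\phi)(x) = \inf_{\pi}\E^\pi_x\{\phi(x_n)\} = -\sup_{\pi}\E^\pi_x\{-\phi(x_n)\}$, valid for the lower semi-analytic $\phi$ in the universal measurability framework. Then $x \in S_{0^+}$ becomes $\liminf_n T_0^n(\phi)(x) \geq 0$ and $x \in S_0$ becomes $\limsup_n T_0^n(\phi)(x) \geq 0$, and the conclusions follow by passing to the limit in the displayed bound and using $\alpha>0$. When $\bar n = 0$ the bound rearranges (legitimately, since $J_n(x)>-\infty$) to $J_n(x) \geq J^*(x) + \tfrac1\alpha T_0^n(\phi)(x)$; taking $\liminf_n$ on $S_{0^+}$ gives $\liminf_n J_n(x) \geq J^*(x)$, hence $J_n(x)\to J^*(x)$, while taking $\limsup_n$ on $S_0$ gives $\limsup_n J_n(x) = J^*(x)$. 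When $\bar n \neq 0$ the index shift blocks a direct rearrangement, so for $x \in S_{0^+}$ I would take $\liminf_n$ through the bound to obtain $\liminf_n J_n(x) \geq \alpha J^*(x) + (1-\alpha)\liminf_n J_n(x)$, which (the $\liminf$ being finite by the $f$-bound when $J^*(x)<\infty$) yields $\liminf_n J_n(x) \geq J^*(x)$; and for $x \in S_0$ under the extra hypothesis that $J_n(x)$ converges to some $L$, taking $\limsup_n$ gives $L \geq \alpha J^*(x)+(1-\alpha)L$, so $L \geq J^*(x)$ and thus $L=J^*(x)$. The case $J^*(x)=+\infty$ is handled separately: then $\alpha J^*(x)=+\infty$ forces the right-hand side, and hence $J_{n+\bar n}(x)$, to equal $+\infty$ for all $n\geq 1$, so $J_n(x)\to +\infty = J^*(x)$.

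The main obstacle I anticipate is the bookkeeping around $\pm\infty$ rather than any deep new idea: I must verify that no $\infty-\infty$ arises anywhere in the chain producing the displayed bound (this is exactly why the argument is carried out inside $A_0(S)$, and why Lemma~\ref{lma-ineqs} is applicable), and that the $\liminf$/$\limsup$ rearrangements dividing by $\alpha$ are valid, which relies on the finiteness supplied by $J_n \geq f$ and on the separate treatment of $J^*(x)=+\infty$. The other delicate point is justifying the identification $T_0^n(\phi)(x) = -\sup_\pi \E^\pi_x\{-\phi(x_n)\}$ as a finite-horizon dynamic-programming equality for lower semi-analytic terminal cost in this framework; once this and the displayed bound are in hand, everything downstream is routine.
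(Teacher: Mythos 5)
Your proposal is correct and follows essentially the same route as the paper's proof: the identical key inequality $T^{n+\bar n}(\0) \geq \alpha J^* + (1-\alpha)T^n(\0) + T_0^n(\phi)$ derived via monotonicity, the second inequality of (\ref{eq-ud-ineq1b}), and the concavity bound (\ref{eq-ud-ineq2b}), together with the same rewriting of $S_{0^+},S_0$ through $T_0^n(\phi)(x) = -\sup_\pi \E^\pi_x\{-\phi(x_n)\}$ and the same finiteness guarantees from $A_0(S)$ and the bound $T^n(\0)\geq f$. The only differences are cosmetic (you rearrange the inequality per $n$ or use superadditivity of $\limsup$, where the paper extracts a subsequence realizing $\limsup_n T_0^n(\phi)(x)$), so no further commentary is needed.
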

\mysmallskip

\begin{thm}[Partial convergence of value iteration] \label{thm-vi-partial2}
Suppose that
\begin{equation}
   J_\infty = T (J_\infty) \quad \text{and} \quad J_\infty  \geq \alpha J^* + \phi \ \ \  \text{for some } \phi \in A_0(S),  \ \alpha \in (0,1].  \label{cond-thm3}
\end{equation}   
Then $J_\infty(x) = J^*(x)$ for all $x \in S_{0^+}$ and for all $x \in S_0$ such that $T^n(\0)(x)$ converges, where the sets $S_{0^+}, S_0$ are as given in Theorem~\ref{thm-vi-partial}.
\end{thm}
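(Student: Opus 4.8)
The plan is to exploit the fixed-point hypothesis $J_\infty = T(J_\infty)$ together with the superadditivity and concavity inequalities of Lemma~\ref{lma-ineqs}, in a manner parallel to the proof of Theorem~\ref{thm-vi-partial}. Since \eqref{eq-ineq0} already gives $J_\infty \leq J^*$, it suffices to establish the reverse inequality $J_\infty(x) \geq J^*(x)$ on the two sets in question. The starting observation is that, because $J_\infty, J^*, \phi \in A_0(S)$ and $A_0(S)$ is closed under addition and positive scaling, the assumed bound $J_\infty \geq \alpha J^* + \phi$ may be pushed through $T^n$ without encountering $\infty - \infty$. Applying $T^n$, using $T^n(J_\infty) = J_\infty$ (from $J_\infty = T(J_\infty)$) and the monotonicity of $T$, then the second inequality in \eqref{eq-ud-ineq1b} with $J_1 = \alpha J^*$, $J_2 = \phi$, and finally the concavity inequality \eqref{eq-ud-ineq2b} applied to $\alpha J^* = \alpha J^* + (1-\alpha)\,\0$ (recalling $T^n(J^*) = J^*$), I obtain for every $n$ the key bound
\[ J_\infty \;\geq\; \alpha J^* + (1-\alpha)\,T^n(\0) + T_0^n(\phi). \]

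Next I would interpret the last term. Since $T_0$ carries zero one-stage cost, finite-horizon dynamic programming gives $T_0^n(\phi)(x) = \inf_{\pi \in \Pi}\E^\pi_x\{\phi(x_n)\}$, and the uniform control on the expected negative part guaranteed by $\phi \in A_0(S)$ makes the expectation linear, so that $-T_0^n(\phi)(x) = \sup_{\pi \in \Pi}\E^\pi_x\{-\phi(x_n)\}$ is exactly the quantity defining $S_{0^+}$ and $S_0$. Hence $x \in S_{0^+}$ means $\liminf_{n}T_0^n(\phi)(x) \geq 0$, while $x \in S_0$ means only $\limsup_n T_0^n(\phi)(x) \geq 0$. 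Now fix $x \in S_{0^+}$. Since the left side of the key bound is constant in $n$, we have $J_\infty(x) \geq \limsup_n(\cdots)$; using $\limsup_n (a_n + b_n) \geq \limsup_n a_n + \liminf_n b_n$ with $a_n = (1-\alpha)\,T^n(\0)(x)$ and $b_n = T_0^n(\phi)(x)$, together with $\limsup_n T^n(\0)(x) = J_\infty(x)$ by definition, yields
\[ J_\infty(x) \;\geq\; \alpha J^*(x) + (1-\alpha)\,J_\infty(x). \]
For $x \in S_0$ with $T^n(\0)(x)$ convergent, the term $(1-\alpha)\,T^n(\0)(x)$ has a genuine limit $(1-\alpha)\,J_\infty(x)$, so the weaker bound $\limsup_n T_0^n(\phi)(x) \geq 0$ already suffices to reach the same inequality; this is precisely why convergence of $T^n(\0)(x)$ is imposed on $S_0$ but not on $S_{0^+}$.

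It then remains to rearrange. First note that the hypothesis itself forbids $J_\infty(x) = -\infty$: since $\phi > -\infty$ and $J^* > -\infty$ with $\alpha > 0$, the right-hand side $\alpha J^*(x) + \phi(x) > -\infty$, so $J_\infty(x) > -\infty$ everywhere. If $J_\infty(x) = +\infty$ then $J_\infty(x) \geq J^*(x)$ is immediate; otherwise $J_\infty(x)$ is finite, and subtracting $(1-\alpha)\,J_\infty(x)$ and dividing by $\alpha \in (0,1]$ gives $J_\infty(x) \geq J^*(x)$. In either case, combined with $J_\infty \leq J^*$, we conclude $J_\infty(x) = J^*(x)$ on $S_{0^+}$ and on the convergent points of $S_0$.

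I expect the main obstacle to be the careful bookkeeping of the limit operations in the presence of extended-real values rather than any deep new idea: justifying the $\limsup$/$\liminf$ splitting, the passage $J_\infty(x) \geq \limsup_n(\cdots)$, and the final cancellation all require that the quantities involved stay away from the indeterminate $\infty - \infty$. The two facts that make this legitimate—namely the identification $T_0^n(\phi)(x) = \inf_{\pi}\E^\pi_x\{\phi(x_n)\}$ with its linearity, and the applicability of the inequalities \eqref{eq-ud-ineq1b}--\eqref{eq-ud-ineq2b}—both rest on the membership $\phi \in A_0(S)$ (and on $J^*, \0 \in A_0(S)$), which must therefore be invoked explicitly at each step where a sum or an iterate is formed.
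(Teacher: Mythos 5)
Your proposal is correct and follows essentially the same route as the paper's proof: the same key inequality $J_\infty \geq \alpha J^* + (1-\alpha)T^n(\0) + T_0^n(\phi)$ obtained from the fixed-point hypothesis and Lemma~\ref{lma-ineqs}, the same identification $T_0^n(\phi)(x) = -\sup_{\pi}\E^\pi_x\{-\phi(x_n)\}$, and the same split between $S_{0^+}$ (pairing $\limsup_n T^n(\0)(x)$ with $\liminf_n T_0^n(\phi)(x)$) and the convergent points of $S_0$. The paper realizes the $\limsup$/$\liminf$ pairing by passing to explicit subsequences rather than quoting superadditivity of $\limsup$, but this is a purely cosmetic difference.
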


In the above theorems we have expressed the conditions in terms of the optimal cost function $J^*$ itself.
When applying the theorems, we can always relax the conditions and replace $J^*$ with the cost function of some policy, so that the conditions can be verified in practice (cf.\ the discussion in Section~\ref{sec4.2-discussion}). The function $\phi$ appearing in the conditions above reflects the difference between the optimal cost function and the iterate $T^{\bar n}(\0)$ or the limit function $J_\infty$ from value iteration. Both theorems use $\phi$ to infer the convergence of value iteration, in a manner that resembles some other sufficient conditions in the literature, which are based on the ``tail behavior'' of policies (cf.\ the subsequent Prop.~\ref{prp-vanH} and the related discussion in Section~\ref{sec4.2-discussion}).

The proofs of Theorems~\ref{thm-vi-partial} and~\ref{thm-vi-partial2} are similar. First, note that for each $x \in S$, the definition of $T_0$ implies
$T^n_0(\phi)(x) = - \sup_{\pi \in \Pi} E^{\pi}_x \{ -\phi(x_n) \},$
so the sets $S_{0^+}, S_0$ are equivalently given by
\begin{equation} \label{eq-thm2-prf0}
  S_{0^+} = \Big\{ x \in S \, \Big| \,  \liminf_{n \to \infty} \, T^n_0(\phi)(x) \geq 0 \Big\}, \qquad
    S_{0} = \Big\{ x \in S \, \Big| \,  \limsup_{n \to \infty} \, T^n_0(\phi)(x) \geq 0 \Big\}.  
\end{equation}

\begin{proof}[Proof of Theorem~\ref{thm-vi-partial}]
For any $m \geq 0$, $T^{m}\big(T^{\bar n}(\0) \big)  \geq T^m(\alpha J^* + \phi)$ by the assumption (\ref{cond-thm2}) and the monotonicity of $T$.
Since $\phi \in A_0(S)$ by assumption and $\alpha J^* \in A_0(S)$ under the (GC) condition~(\ref{eq-tmodel}), 
applying the second inequality in (\ref{eq-ud-ineq1b}) with $J_1=\alpha J^*, J_2=\phi$, we have 
\begin{align}
     T^{m+\bar n}(\0) &  \geq T^m(\alpha J^*) + T^m_0(\phi) \notag \\
        & \geq \alpha J^* + (1- \alpha) \, T^m(\0) + T^m_0(\phi), \label{eq-thm2-prf1}
\end{align}        
where in the last inequality we applied the inequality (\ref{eq-ud-ineq2b}) with $J_1=J^*, J_2=\0$ and used the fact $T^m(J^*) = J^*$.

Consider the inequality (\ref{eq-thm2-prf1}) for each $x \in S$ separately. 
Note that under the condition~(\ref{eq-tmodel}), we have $J^*(x) > - \infty$ and $\inf_{m \geq 1} T^m(\0)(x) \geq f(x) > -\infty$ where the function $f$ is as given in Eq.~(\ref{eq-subsetA0}). We also have, by the assumption $\phi \in A_0(S)$, that $\inf_{m \geq 1} T^m_0(\phi)(x) > - \infty$. 
Hence, from the inequality (\ref{eq-thm2-prf1}) for each state $x$, by taking liminf of both sides, we obtain that for each $x \in S$,
\begin{align*}
  \liminf_{n \to \infty} \, T^n(\0)(x) & \geq \alpha J^*(x) + (1 - \alpha) \liminf_{n \to \infty} \, T^n(\0)(x) + \liminf_{n \to \infty} \, T^n_0(\phi)(x).
\end{align*}  
Since $\liminf_{n \to \infty} T^n(\0)(x) > - \infty$ and $\alpha \in (0,1]$, this implies
\begin{equation}
  \liminf_{n \to \infty} \, T^n(\0)(x) \geq J^*(x) + \tfrac{1}{\alpha} \, \liminf_{n \to \infty} \, T^n_0(\phi)(x). \label{eq-thm-prf2}
\end{equation}   
For $x \in S_{0^+}$, Eqs.~(\ref{eq-thm2-prf0}), (\ref{eq-thm-prf2}) together imply
$$ \liminf_{n \to \infty} \, T^n(\0)(x) \geq J^*(x), \ \ \forall \, x \in S_{0^+}; $$ 
but $\limsup_{n \to \infty} T^n(\0)(x) \leq J^*(x)$ always. Hence $\lim_{n \to \infty}  T^n(\0)(x) = J^*(x)$ for all $x \in S_{0^+}$.

The proof of the statements for $x \in S_0$ is similar. For each $x \in S$, take a subsequence $\{n_k\}$ of integers such that $\lim_{k \to \infty} T^{n_k}_0(\phi)(x) = \limsup_{n \to \infty} T^{n}_0(\phi)(x)$. We have, by inequality (\ref{eq-thm2-prf1}),
$$ T^{n_k +\bar n}(\0)(x) \geq \alpha J^*(x) + (1- \alpha) \, T^{n_k}(\0)(x) + T^{n_k}_0(\phi)(x), \qquad k \geq 1.$$
As we showed earlier, $J^*(x) > -\infty$, $\inf_{m \geq 1} T^{m}(\0)(x) > - \infty$, and $\inf_{m \geq 1} T^{m}_0(\phi)(x) >- \infty$. 
So, by taking limsup of both sides of the preceding inequality, we obtain
\begin{equation} \label{eq-thm2-prf3}
  \limsup_{k \to \infty} \,T^{n_k +\bar n}(\0)(x) \geq \alpha J^*(x) + (1 - \alpha) \, \limsup_{k \to \infty} \, T^{n_k}(\0)(x)  + \limsup_{n \to \infty} \, T^{n}_0(\phi)(x).
\end{equation}  
If $\bar n = 0$ and $x \in S_0$, then Eqs.~(\ref{eq-thm2-prf0}), (\ref{eq-thm2-prf3}) together imply
$$ \limsup_{k \to \infty} \, T^{n_k}(\0)(x) \geq \alpha J^*(x) + (1 - \alpha) \, \limsup_{k \to \infty} \, T^{n_k}(\0)(x),$$
and since $\limsup_{k \to \infty} \, T^{n_k}(\0)(x) > - \infty$ and $\alpha \in (0,1]$, we then have
$$\limsup_{k \to \infty} \, T^{n_k}(\0)(x) \geq J^*(x) \quad \Longrightarrow \quad \limsup_{n \to \infty} \, T^{n}(\0)(x) \geq J^*(x), $$
which is equivalent to $\limsup_{n \to \infty} T^{n}(\0)(x) = J^*(x)$ [cf.\ Eq.~(\ref{eq-ineq0})].
If $\bar n \not= 0$ but $x \in S_0$ and $T^n(\0)(x)$ converges, then the inequality~(\ref{eq-thm2-prf3}) together with Eq.~(\ref{eq-thm2-prf0}) implies that
$$  \lim_{n \to \infty} \, T^{n}(\0)(x) \geq \alpha J^*(x) + (1 - \alpha) \lim_{n \to \infty} \, T^{n}(\0)(x),$$
which in turn implies $\lim_{n \to \infty} T^{n}(\0)(x) = J^*(x)$, similar to the preceding proof.
\end{proof}

\begin{proof}[Proof of Theorem~\ref{thm-vi-partial2}]
Recall that $J_\infty \leq J^*$, so to show $J_\infty(x) = J^*(x)$ for some state $x$, it is sufficient to show $J_\infty(x) \geq J^*(x)$. 

Using the condition~(\ref{cond-thm3}): $J_\infty \geq \alpha J^* + \phi$ where $\phi \in A_0(S)$, and using also Lemma~\ref{lma-ineqs}, we have, similar to the proof of Theorem~\ref{thm-vi-partial}, that for any $m \geq 0$,
\begin{equation} \label{eq-thm3-prf1}
   J_\infty = T^m(J_\infty) \geq T^m( \alpha J^* + \phi) \geq \alpha J^* + (1 - \alpha) \, T^m(\0) + T_0^m(\phi).
\end{equation}   
Consider this inequality for each $x \in S$ separately. For each $x$, take a subsequence $\{n_k\}$ of integers such that 
$\lim_{k \to \infty} T^{n_k}(\0)(x) = \limsup_{n \to \infty} \, T^n(\0)(x)$. By the inequality (\ref{eq-thm3-prf1}), we have
\begin{equation} \label{eq-thm3-prf2}
   J_\infty(x)   \geq \alpha J^*(x) +  (1 - \alpha) \,T^{n_k}(\0)(x) + T^{n_k}_0(\phi)(x), \qquad \forall \, k \geq 1.
\end{equation}   
As discussed in the proof of Theorem~\ref{thm-vi-partial}, we have $J^*(x) > - \infty$, $\inf_{m \geq 1} T^{m}(\0)(x) > - \infty$, and $\inf_{m \geq 1} T^{m}_0(\phi)(x) > - \infty$. Consequently, by letting $k$ go to $\infty$ in the preceding inequality, we obtain
\begin{align}
  J_\infty(x)  & \geq \alpha J^*(x) + (1 - \alpha) \,\limsup_{n \to \infty} \, T^n(\0)(x) + \liminf_{n \to \infty} \, T^n_0(\phi)(x)  \label{eq-thm3-prf3} \\
& \geq \alpha J^*(x) + (1 - \alpha)  J_\infty(x) + \liminf_{n \to \infty} \, T^n_0(\phi)(x), \notag
\end{align}
and since $J_\infty(x) > - \infty$ and $\alpha \in (0,1]$, this implies
$$ J_\infty(x) \geq J^*(x) + \tfrac{1}{\alpha} \, \liminf_{n \to \infty} \, T^n_0(\phi)(x).$$
If $x \in S_{0^+}$, the preceding inequality together with Eq.~(\ref{eq-thm2-prf0}) yields $J_\infty(x) \geq J^*(x)$, which implies $J_\infty(x) = J^*(x)$, as discussed earlier. This proves the statement for $x \in S_{0^+}$.

From the inequality (\ref{eq-thm3-prf1}), we also have that for any $x \in S$,
\begin{equation} \label{eq-thm3-prf4}
    J_\infty(x)  \geq \alpha J^*(x) + (1 - \alpha) \liminf_{n \to \infty} \, T^n(\0)(x) + \limsup_{n \to \infty} \, T^n_0(\phi)(x).
\end{equation}    
This inequality is obtained similarly to the inequality (\ref{eq-thm3-prf3}), by starting from the inequality (\ref{eq-thm3-prf2}) but with the subsequence $\{n_k\}$ being such that $\lim_{k \to \infty} T^{n_k}_0(\phi)(x) = \limsup_{n \to \infty} \, T^n_0(\phi)(x)$.
If $x \in S_0$ and $T^n(\0)(x)$ converges so that $J_\infty(x) = \lim_{n \to \infty} T^n(\0)(x)$, then the inequality (\ref{eq-thm3-prf4}) together with Eq.~(\ref{eq-thm2-prf0}) implies
$ J_\infty(x)  \geq \alpha J^*(x) + (1 - \alpha) J_\infty(x)$,
which is equivalent to $J_\infty(x) \geq J^*(x)$ (since $J_\infty(x) > - \infty$ and $\alpha \in (0,1]$), and this is in turn equivalent to $J_\infty(x) = J^*(x)$, as discussed earlier. This completes the proof.
\end{proof}

\subsubsection{Implications and Discussion} \label{sec4.2-discussion}

We start with two direct consequences of Theorem~\ref{thm-vi-partial}. The first corollary simply relaxes the condition of Theorem~\ref{thm-vi-partial} using the fact $J^* \leq J_\pi$ for any policy $\pi$; the relaxed condition is easier to verify in practice.

\begin{cor} \label{cor-vi-partial0}
Suppose there exists a policy $\pi$ satisfying $T^{\bar n}(\0)  \geq \alpha J_\pi + \phi$ for some $\phi \in A_0(S)$, $\bar n \geq 0$, and $\alpha \in (0,1]$. Then the conclusions of Theorem~\ref{thm-vi-partial} hold.
\end{cor}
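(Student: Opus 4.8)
The plan is to reduce the hypothesis of the corollary to the condition~(\ref{cond-thm2}) of Theorem~\ref{thm-vi-partial} by a single monotonicity observation, and then invoke that theorem verbatim; essentially no new analysis beyond a pointwise inequality is needed.

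First I would recall that by the very definition of the optimal cost function, $J^* = \inf_{\pi \in \Pi} J_\pi$, so that $J_\pi \geq J^*$ pointwise for the given policy $\pi$. Since $\alpha \in (0,1]$ is strictly positive, scaling this inequality by $\alpha$ preserves it, giving $\alpha J_\pi \geq \alpha J^*$ pointwise. Here I would note that $J^*(x) > - \infty$ under the (GC) condition~(\ref{eq-tmodel}), while $J_\pi(x)$ may equal $+\infty$; in either case the extended-real inequality $\alpha J_\pi(x) \geq \alpha J^*(x)$ is valid.

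Next I would add $\phi$ to both sides. Because $\phi \in A_0(S)$ we have $\phi > - \infty$ everywhere, so the sums $\alpha J^* + \phi$ and $\alpha J_\pi + \phi$ never produce the indeterminate form $\infty - \infty$, and adding $\phi$ preserves the inequality, yielding $\alpha J_\pi + \phi \geq \alpha J^* + \phi$. Combining this with the assumed bound $T^{\bar n}(\0) \geq \alpha J_\pi + \phi$ gives
$$ T^{\bar n}(\0) \geq \alpha J_\pi + \phi \geq \alpha J^* + \phi. $$
Thus condition~(\ref{cond-thm2}) holds with exactly the same $\phi$, $\bar n$, and $\alpha$, and in particular with the same $\phi \in A_0(S)$, so the sets $S_{0^+}$ and $S_0$ appearing in Theorem~\ref{thm-vi-partial} are unchanged. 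Applying Theorem~\ref{thm-vi-partial} then delivers all of its conclusions for the sets $S_{0^+}, S_0$ determined by this $\phi$.

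The only step requiring any care --- and hence the sole, quite mild, obstacle --- is the handling of extended-real arithmetic: verifying that multiplying by $\alpha > 0$ and adding the function $\phi$ both preserve the pointwise inequality. This rests precisely on $J^* > - \infty$ (from the (GC) condition~(\ref{eq-tmodel})) and $\phi > - \infty$ (from $\phi \in A_0(S)$), which guarantee no $\infty - \infty$ is ever encountered. Once these are checked, the corollary is an immediate specialization of Theorem~\ref{thm-vi-partial}.
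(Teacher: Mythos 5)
Your proposal is correct and follows exactly the paper's (one-line) argument: since $J_\pi \geq J^*$, the hypothesis $T^{\bar n}(\0) \geq \alpha J_\pi + \phi$ immediately implies condition~(\ref{cond-thm2}) with the same $\phi$, $\bar n$, $\alpha$, and Theorem~\ref{thm-vi-partial} applies. Your extra care with the extended-real arithmetic (using $J^* > -\infty$ and $\phi > -\infty$) is a sensible, if routine, addition that the paper leaves implicit.
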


To prepare for a discussion on the relation of our results with the bridging condition, let us state another evident implication of Theorem~\ref{thm-vi-partial} by letting $\phi(\cdot)\equiv 0$ in the theorem.

\begin{cor} \label{cor-vi-partial1}
We have  $T^n(\0) \to J^*$ if there exist $\bar n \geq 0$ and $\alpha \in (0,1]$ such that 
\begin{equation}
  T^{\bar n}(\0)  \geq \alpha J^* \quad \text{or} \ \ \  T^{\bar n}(\0)  \geq \alpha J_\pi \quad \text{for some policy} \  \pi. \label{cond-alt}
\end{equation}
\end{cor}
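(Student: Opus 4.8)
The plan is to apply Theorem~\ref{thm-vi-partial} (for the first inequality in (\ref{cond-alt})) and Corollary~\ref{cor-vi-partial0} (for the second), with the specific choice $\phi \equiv \0$, as suggested in the remark preceding the statement. First I would verify that $\0 \in A_0(S)$; this is immediate, since $A_0(S)$ was noted to contain all real-valued constant functions (and all nonnegative lower semi-analytic functions). With this choice of $\phi$, the hypothesis (\ref{cond-thm2}) of Theorem~\ref{thm-vi-partial} reads $T^{\bar n}(\0) \geq \alpha J^* + \0 = \alpha J^*$, which is exactly the first alternative in (\ref{cond-alt}).

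The key observation is that the set $S_{0^+}$ degenerates to all of $S$ once $\phi \equiv \0$. Indeed, for $\phi \equiv \0$ we have $\E^\pi_x\{-\phi(x_n)\} = 0$ for every policy $\pi$, every state $x$, and every $n$, so that
$$ \limsup_{n \to \infty} \, \sup_{\pi \in \Pi} \E^\pi_x\{ - \phi(x_n)\} = 0 \leq 0, \qquad \forall \, x \in S, $$
whence $S_{0^+} = S$ by the definition of $S_{0^+}$ in Theorem~\ref{thm-vi-partial}. The theorem's conclusion $\lim_{n \to \infty} T^n(\0)(x) = J^*(x)$ therefore holds for every $x \in S_{0^+} = S$, which is precisely $T^n(\0) \to J^*$. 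This disposes of the case governed by the first inequality in (\ref{cond-alt}).

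For the case $T^{\bar n}(\0) \geq \alpha J_\pi$, I would instead invoke Corollary~\ref{cor-vi-partial0}, again taking $\phi \equiv \0$. Its hypothesis then becomes $T^{\bar n}(\0) \geq \alpha J_\pi$, matching the second alternative in (\ref{cond-alt}), and its conclusion is that the conclusions of Theorem~\ref{thm-vi-partial} hold. Since $S_{0^+} = S$ exactly as computed above, we again obtain $T^n(\0) \to J^*$, completing the argument.

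I do not anticipate any substantive obstacle: the corollary is a pure specialization in which the ``tail correction'' function $\phi$ is taken identically zero, which collapses the partial-convergence conclusion (holding only on $S_{0^+}$) into a full-convergence statement (holding on all of $S$). The only points requiring attention are the two routine verifications identified above---that $\0 \in A_0(S)$ and that $S_{0^+} = S$ for this $\phi$---both following directly from the relevant definitions.
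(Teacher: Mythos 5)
Your proposal is correct and coincides with the paper's own argument: the paper obtains this corollary precisely by setting $\phi \equiv \0$ in Theorem~\ref{thm-vi-partial} (and, for the $J_\pi$ variant, in Cor.~\ref{cor-vi-partial0}), and your two routine verifications---that $\0 \in A_0(S)$ and that $S_{0^+} = S$ for this choice of $\phi$---are exactly the steps the paper leaves implicit when it calls the corollary an ``evident implication.''
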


\begin{rem} \label{rmk-bridging}
We can compare the condition of Cor.~\ref{cor-vi-partial0} with the bridging condition for the positive model (P)~\cite{Whit79}, which can be stated in our notation as $T^{\bar n}(\0)  \geq \alpha J_\pi$ for some policy $\pi$, i.e., the condition (\ref{cond-alt}) in Cor.~\ref{cor-vi-partial1}.
According to our understanding of~\cite{Whit79}, the name ``bridging'' reflects the fact that the condition relates the finite-horizon cost $T^{\bar n}(\0)$ to the infinite-horizon cost $J_\pi$. In this sense, the conditions of our theorems are also of the ``bridging-type,'' and in our theorems, the differences between the finite-horizon and infinite-horizon costs are reflected in the function $\phi$ and used to infer the convergence of value iteration at various states. 
For the case $g \geq 0$, 
Whittle~\cite{Whit79} and then Hartley~\cite{Har80} gave two different proofs, establishing that under the bridging condition, $T^n(J) \to J^*$ for all functions $J$ with $0 \leq J \leq c J^*$ for some real $c$. Our proofs of Theorems~\ref{thm-vi-partial},~\ref{thm-vi-partial2} indeed started out similarly to Hartley's.

For the (GC) model, we see that under the condition~(\ref{cond-alt}),
the desired convergence $T^n(\0) \to J^*$ holds. Thus it may seem that we could have taken the original bridging condition~(\ref{cond-alt}), which is much simpler than the conditions of Theorem~\ref{thm-vi-partial} or Cor.~\ref{cor-vi-partial0}, for the more general total cost problems in (GC). But the fact is that condition (\ref{cond-alt}) can be not only restrictive but also unnatural for the (GC) model, where $J^*$ can take negative values. In particular, for those states $x$ with $J^*(x) \leq 0$, $\alpha J^*(x) \geq J^*(x)$ for all $\alpha \in (0,1]$, whereas it is possible that $T^n(\0)(x) < J^*(x)$ for all $n$ and then the inequalities in (\ref{cond-alt}) cannot be satisfied. This is the reason for introducing the function $\phi$ in formulating the conditions of Theorems~\ref{thm-vi-partial},~\ref{thm-vi-partial2}.

If the conditions of Theorem~\ref{thm-vi-partial} are met with $S_{0^+} = S$, we have $T^n(\0) \to J^*$, and this combined with Theorem~\ref{thm-vi} then implies, by the monotonicity of $T$, that $T^n(J) \to J^*$ for all lower semi-analytic functions $J$ such that $0 \leq J \leq c (J^{*+} + J^{*-})$ for some real $c$. This set of $J$ then resembles the set of initial functions in the conclusion of~\cite{Whit79} for the model (P) mentioned earlier, although here $J^*$ need not lie in this set because it can take negative values.
\qed
\end{rem}

We now discuss another special case of Theorem~\ref{thm-vi-partial} and relate it to two existing sufficient conditions for the convergence of value iteration. 
Take $\bar n = 0$ in Theorem~\ref{thm-vi-partial} with $\phi = - J^*$, $\alpha = 1$. We deduce that if $J^*$ is real-valued and $- J^* \in A_0(S)$, then the conclusions given in the subsequent Prop.~\ref{prp-vanH} must hold. 
However, in this case, these conditions on $J^*$ turn out to be unnecessary (they are artifacts of the limitation of the proof technique we used), as a short direct proof given below shows. This result, Prop.~\ref{prp-vanH}, relates to a theorem of van Hee et al.~\cite[Theorem 3.5]{vanH77}: Indeed the first half of Prop.~\ref{prp-vanH} is a ``pointwise version'' of \cite[Theorem 3.5]{vanH77}, and the proof argument for Prop.~\ref{prp-vanH}, included below for completeness, is essentially from~\cite{vanH77}.

\begin{prop}[{Cf.\ van Hee et al.~\cite[Theorem 3.5]{vanH77}, van der Wal~\cite[Theorem 3.3]{vdW81}}] \label{prp-vanH}
The following holds:
$$ \limsup_{n \to \infty} \, T^n(\0)(x) = J^*(x), \ \   \forall \, x \in S_0, \qquad \quad \lim_{n \to \infty} T^n(\0)(x) = J^*(x), \ \   \forall \, x \in S_{0^+},$$
where the sets $S_0$ and $S_{0^+}$ are given by
$$ S_{0} = \Big\{ x \in S \, \Big| \,  \liminf_{n \to \infty} \, \sup_{\pi \in \Pi} \E^\pi_x \{ J^*(x_n) \} \leq 0 \Big\}, \qquad
    S_{0^+} = \Big\{ x \in S \, \Big| \,  \limsup_{n \to \infty} \, \sup_{\pi \in \Pi} \E^\pi_x \{ J^*(x_n) \} \leq 0 \Big\}.$$
In particular, if     
$\limsup_{n \to \infty} \, \sup_{\pi \in \Pi} \E^{\pi}_x \big\{ J^{*}(x_n) \big\} \leq 0$ for all $x \in S$,
then $T^n(\0) \to J^*$.
\end{prop}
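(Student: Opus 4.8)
The plan is to reduce both assertions to a single pointwise lower bound on the iterates $T^n(\0)$ and then read off the two conclusions from the definitions of $S_0$ and $S_{0^+}$. Since $\limsup_{n\to\infty}T^n(\0)\le J^*$ always holds [cf.\ Eq.~(\ref{eq-ineq0})], only lower bounds are needed: for $x\in S_{0^+}$ it suffices to show $\liminf_{n\to\infty}T^n(\0)(x)\ge J^*(x)$, and for $x\in S_0$ it suffices to show $\limsup_{n\to\infty}T^n(\0)(x)\ge J^*(x)$.

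The crux is the inequality
\[
   T^n(\0)(x)\ \ge\ J^*(x)-\sup_{\pi\in\Pi}\E^\pi_x\{J^*(x_n)\},\qquad \forall\,x\in S,\ n\ge 1,
\]
which I would prove directly, bypassing the set $A_0(S)$ and Lemma~\ref{lma-ineqs}. Fix a Markov policy $\pi=(\mu_0,\mu_1,\ldots)$. Since $J^*=T(J^*)\le T_\mu(J^*)$ for every $\mu$ and each $T_\mu$ is monotone, a stagewise induction gives $J^*\le(T_{\mu_0}\circ\cdots\circ T_{\mu_{n-1}})(J^*)$. Expanding the composed operator as an $n$-stage expectation --- the same manipulation used in the proof of Theorem~\ref{thm-vi} --- and splitting it into $\E^\pi_x\{\sum_{k=0}^{n-1}g(x_k,u_k)\}$ and $\E^\pi_x\{J^*(x_n)\}$, which is legitimate because $J^*\ge-\sup_\pi J^-_\pi$ [the lower bound appearing in Eq.~(\ref{eq-subsetA0})], I would obtain
\[
   J^*(x)\ \le\ \E^\pi_x\Big\{\textstyle\sum_{k=0}^{n-1}g(x_k,u_k)\Big\}+\E^\pi_x\{J^*(x_n)\}\qquad\text{for every }\pi.
\]
Taking the infimum over $\pi$ and using $\inf_\pi(a_\pi+b_\pi)\le\inf_\pi a_\pi+\sup_\pi b_\pi$ together with $\inf_\pi\E^\pi_x\{\sum_{k=0}^{n-1}g\}=T^n(\0)(x)$ then yields the displayed inequality.

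With this inequality in hand the conclusions follow at once. For $x\in S_{0^+}$ we have $\limsup_n\sup_\pi\E^\pi_x\{J^*(x_n)\}\le0$, so taking $\liminf$ over $n$ gives $\liminf_nT^n(\0)(x)\ge J^*(x)$, whence $\lim_nT^n(\0)(x)=J^*(x)$. For $x\in S_0$ we instead have $\liminf_n\sup_\pi\E^\pi_x\{J^*(x_n)\}\le0$; evaluating the key inequality along a subsequence on which $\sup_\pi\E^\pi_x\{J^*(x_n)\}$ converges to this $\liminf$ gives $\limsup_nT^n(\0)(x)\ge J^*(x)$, whence $\limsup_nT^n(\0)(x)=J^*(x)$. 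The final ``in particular'' statement is simply the special case $S_{0^+}=S$.

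The algebra is routine; the real work lies in the measure-theoretic bookkeeping within the universal measurability framework, and I expect two points to be delicate. The first is the additive split of the $n$-stage expectation, i.e.\ that no $\infty-\infty$ occurs: this rests on $J^*>-\infty$ and $J^*\ge-\sup_\pi J^-_\pi$ under the (GC) condition~(\ref{eq-tmodel}), and it is convenient here that $J^*$ is lower semi-analytic, hence universally measurable [Theorem~\ref{thm-m1}(a)], so that $\E^\pi_x\{J^*(x_n)\}$ is an ordinary integral rather than an outer integral (in contrast to the handling of $J^{*-}$ in Section~\ref{sec3}). The second is the identity $\inf_\pi\E^\pi_x\{\sum_{k=0}^{n-1}g\}=T^n(\0)(x)$, which is the semigroup/measurable-selection property of the operators $T_\mu$ and may be reduced to Markov policies via Lemma~\ref{lma-B}(a). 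Everything else is immediate.
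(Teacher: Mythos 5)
Your proposal is correct and follows essentially the same route as the paper's proof: the key inequality $J^*(x) = T^n(J^*)(x) \leq J_{\pi,n}(x) + \E^\pi_x\{J^*(x_n)\}$, followed by minimization over policies and passage to the appropriate $\liminf$/$\limsup$. The extra care you take with the stagewise induction, the $\inf_\pi(a_\pi+b_\pi)\leq\inf_\pi a_\pi+\sup_\pi b_\pi$ step, and the measurability bookkeeping only makes explicit what the paper leaves implicit.
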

\begin{proof}
Consider any policy $\bar \pi$ and state $x$. Let $J_{\bar \pi,n}(x)$ be the $n$-stage cost of $\bar \pi$ for the initial state $x$. Since $T^n(J^*) = J^*$, by the definition of $T^n$, we have
$$ J^*(x)  = T^n(J^*)(x) \leq J_{\bar \pi,n}(x) + \E^{\bar \pi}_x \big\{ J^*(x_n) \big\} \leq  J_{\bar \pi,n}(x) + \sup_{\pi \in \Pi} \E^{\pi}_x \big\{ J^*(x_n) \big\}, 
$$
so, by minimizing the right-hand side over $\bar \pi$, we have
$$ J^*(x)  \leq T^n(\0)(x) + \sup_{\pi \in \Pi} \E^{\pi}_x \big\{ J^*(x_n) \big\}.$$
By taking liminf over $n$ in the right-hand side of the preceding inequality, it follows that
\begin{align*}
  J^*(x) &  \leq \limsup_{n \to \infty} \, T^n(\0)(x) +\liminf_{n \to \infty} \, \sup_{\pi \in \Pi} \E^{\pi}_x \big\{ J^*(x_n) \big\},  \\ 
  J^*(x) & \leq \liminf_{n \to \infty} \,T^n(\0)(x) + \limsup_{n \to \infty} \, \sup_{\pi \in \Pi} \E^{\pi}_x \big\{ J^*(x_n) \big\}.
\end{align*}  
Using the definitions of the sets $S_0$, $S_{0^+}$, we then have that $ \limsup_{n \to \infty} \, T^n(\0)(x) \geq J^*(x)$ for $x \in S_0$ and $\liminf_{n \to \infty} T^n(\0)(x) \geq J^*(x)$ for $x \in S_{0^+}$. Since $ \limsup_{n \to \infty} \, T^n(\0)(x) \leq J^*(x)$ for all $x$, the proposition follows.
\end{proof}

\begin{rem} 
For the (GC) model, similar to Lemma~\ref{lma-1}, it holds
\footnote{The proof of Eq.~(\ref{eq-Js-tc}) is similar to that of Lemma~\ref{lma-1}: By Lemma~\ref{lma-B}(b), we have
$$ J_\pi(x) = \E^\pi_x \left\{ \sum_{k=0}^{n-1} g(x_k, u_k) \right\} + \E^\pi_x \left\{ \sum_{k=n}^{\infty} g(x_k, u_k) \right\} \geq  \E^\pi_x \left\{ \sum_{k=0}^{n-1} g(x_k, u_k) \right\} + \E^\pi_x \left\{ J^*(x_n) \right\}.$$
Since $+ \infty > J_\pi(x) = \lim_{n \to \infty} \E^\pi_x \left\{ \sum_{k=0}^{n-1} g(x_k,u_k) \right\} > - \infty$ by the (GC) condition~(\ref{eq-tmodel}), Eq.~(\ref{eq-Js-tc}) follows.} 
that for any state $x \in S$ and policy $\pi$ such that $J_\pi(x) < + \infty$,
\begin{equation} \label{eq-Js-tc}
  \limsup_{n \to \infty} \, \E^{\pi}_x \big\{ J^{*}(x_n) \big\} \leq 0.
\end{equation}  
We can compare Eq.~(\ref{eq-Js-tc}) with the sufficient condition for $T^n(\0)(x) \to J^*(x)$ given by the definition of $S_{0^+}$ in Prop.~\ref{prp-vanH}:
$\limsup_{n \to \infty} \, \sup_{\pi \in \Pi} \E^\pi_x \{ J^*(x_n) \}  \leq 0,$
which not only implies Eq.~(\ref{eq-Js-tc}) but also requires that as $n$ increases, the expectations $\E^{\pi}_x \big\{ J^{*}(x_n) \big\}$ under different policies $\pi$ must approach the interval $(-\infty, 0]$ at ``comparable speeds.''\qed
\end{rem}

\begin{rem}
By Lemma~\ref{lma-B}(b), $\E^\pi_x \left\{ \sum_{k=n}^\infty g(x_k, u_k) \right\} \geq \E^\pi_x \left\{ J^*(x_n) \right\}$ for any policy $\pi$, state $x$ and $n \geq 0$. So Prop.~\ref{prp-vanH} implies the following ``tail conditions'' for convergence of value iteration, which can be relatively easier to verify in practice than the conditions of Prop.~\ref{prp-vanH}:
\begin{equation}
   \limsup_{n \to \infty} \, T^n(\0)(x) = J^*(x), \qquad \forall \, x \ s.t. \  \liminf_{n \to \infty} \, \sup_{\pi \in \Pi} \E^\pi_x \left\{ \sum_{k=n}^\infty g(x_k, u_k) \right\} \leq 0, 
\end{equation}
\begin{equation} \label{eq-tail2}
   \lim_{n \to \infty} \, T^n(\0)(x) = J^*(x), \qquad \forall \, x  \ s.t. \  \limsup_{n \to \infty} \, \sup_{\pi \in \Pi} \E^\pi_x \left\{ \sum_{k=n}^\infty g(x_k, u_k) \right\} \leq 0.
\end{equation}
The statement in Eq.~(\ref{eq-tail2}) is a ``pointwise version'' of a theorem in Hern\'{a}ndez-Lerma and Lasserre~\cite[Theorem 9.3.5(b)]{HL99}.
\footnote{The statement of~\cite[Theorem 9.3.5(b)]{HL99} differs from Eq.~(\ref{eq-tail2}) here in that it has ``liminf'' instead of ``limsup'' in the condition on $x$. However, an inspection of its proof shows that the correct version should be as given in Eq.~(\ref{eq-tail2}). Indeed, otherwise, Example \ref{ex-nolimit} (Section~\ref{sec5}) would serve as a counterexample: there the condition $\liminf_{n \to \infty} \, \sup_{\pi \in \Pi} \E^\pi_x \left\{ \sum_{k=n}^\infty g(x_k, u_k) \right\} \leq 0$ is met at all states $x$, and yet $\{T^n(\0)\}$ does not have a limit.}
\qed
\end{rem}

\subsubsection*{The Special Case (UD)}

We consider now the $\beta$-discounted problems in the (UD) class (cf.\ Def.~\ref{def-ud}). For these problems, the dynamic programming operators $T, T_+, T_-, T_0$ are defined as for the (GC) model, except that the integrals in their definitions are now multiplied by the discount factor $\beta$ (cf.\ the discussion preceding Cor.~\ref{cor-ud1}). As mentioned before Def.~\ref{def-ud}, it is a standard procedure to reformulate these discounted problems as undiscounted total cost problems on an augmented state space $S \cup \{\Delta\}$ with $\Delta$ representing an absorbing, cost-free, termination state. By first applying Theorems~\ref{thm-vi-partial},~\ref{thm-vi-partial2} to the equivalent undiscounted problems, and then expressing the results in terms of the original discounted problems, we obtain the following version of Theorems~\ref{thm-vi-partial} and~\ref{thm-vi-partial2} for the unbounded discounted model (UD).

\begin{cor} \label{cor-ud-vi0}
{\rm (UD)} \ The conclusions of Theorems~\ref{thm-vi-partial},~\ref{thm-vi-partial2} hold, under the same conditions of these theorems, if the sets $S_{0^+}, S_0$ in their statements are redefined as 
\begin{align*}
S_{0^+} & : = \Big\{ x \in S \, \Big| \,  \limsup_{n \to \infty} \,  \sup_{\pi \in \Pi} \E^\pi_x \{ - \beta^n \phi(x_n) \} \leq 0 \Big\}, \\
  S_0 & : = \Big\{ x \in S \, \Big| \,  \liminf_{n \to \infty} \,  \sup_{\pi \in \Pi} \E^\pi_x \{ - \beta^n \phi(x_n) \} \leq 0 \Big\}.
\end{align*}
\end{cor}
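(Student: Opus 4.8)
The plan is to reduce the $\beta$-discounted (UD) problem to an equivalent undiscounted total cost problem, apply Theorems~\ref{thm-vi-partial} and~\ref{thm-vi-partial2} verbatim to that problem, and then read the resulting sets $S_{0^+}, S_0$ back in discounted form. Following the standard construction described before Def.~\ref{def-ud}, I would enlarge the state space to $S \cup \{\Delta\}$, where $\Delta$ is absorbing and cost-free, extend the one-stage cost by $g(\Delta, \cdot) = 0$, and define the augmented kernel so that from each $(x,u) \in \Gamma$ the system moves to $\Delta$ with probability $1-\beta$ and otherwise follows $\beta\, q(\cdot \mid x,u)$. One first checks that this augmented MDP inherits the structural hypotheses: $S \cup \{\Delta\}$ is Borel, the augmented constraint graph is analytic, the extended $g$ is lower semi-analytic, and, since $J^{-,\mathrm{aug}}_\pi(x) = \E^{\pi}_x\{\sum_k \beta^k g_-(x_k,u_k)\}$ equals the (UD) quantity $J^-_\pi(x)$, the augmented problem satisfies the (GC) condition~(\ref{eq-tmodel}). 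Thus Theorems~\ref{thm-vi-partial},~\ref{thm-vi-partial2} apply to it.

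Next I would record the correspondences between the two formulations. Because $\Delta$ is cost-free and absorbing, the augmented operator $T^{\mathrm{aug}}$ satisfies $T^{\mathrm{aug},n}(\0)(\Delta) = 0$ for all $n$, and on $S$ it reduces to the discounted operator, so $T^{\mathrm{aug},n}(\0)\big|_S = T^n(\0)$ (the (UD) iterate); likewise $J^{*,\mathrm{aug}}\big|_S = J^*$ with $J^{*,\mathrm{aug}}(\Delta)=0$, and $J^{\mathrm{aug}}_\infty\big|_S = J_\infty$. I would extend the given $\phi \in A_0(S)$ to the augmented space by $\phi(\Delta) = 0$, and verify that this extension lies in $A_0(S \cup \{\Delta\})$ (it is lower semi-analytic, and its iterates under the augmented $T_0$ stay bounded below, by the same argument that places the function $f$ of Eq.~(\ref{eq-subsetA0}) into $A_0$). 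With these identifications, the hypotheses~(\ref{cond-thm2}) and~(\ref{cond-thm3}) for the augmented problem are \emph{identical} on $S$ to the stated ones, since both sides agree on $S$ and hold trivially at $\Delta$.

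The crux is then a short computation showing that the sets $S_{0^+}, S_0$ produced by the theorems for the augmented problem coincide, when restricted to $S$, with the redefined sets in the corollary. Using the reformulation~(\ref{eq-thm2-prf0}), these sets are governed by $T_0^{\mathrm{aug},n}(\phi)$. Since $\phi(\Delta)=0$ and $\Delta$ is absorbing, I would show by induction that $T_0^{\mathrm{aug},n}(\phi)(\Delta)=0$ and that $T_0^{\mathrm{aug},n}(\phi)\big|_S$ equals the (UD) iterate $T_0^n(\phi)$; the latter in turn equals $-\sup_{\pi \in \Pi} \E^\pi_x\{-\beta^n \phi(x_n)\}$, the (UD) analog of~(\ref{eq-thm2-prf0}), because each application of the (UD) $T_0$ contributes one transition and one factor of $\beta$. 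Substituting this into the definitions of $S_{0^+}, S_0$ yields exactly the expressions in the corollary. Finally, $\Delta$ belongs to both sets trivially, and the conclusions hold there since $J^{*,\mathrm{aug}}(\Delta)=0=T^{\mathrm{aug},n}(\0)(\Delta)$, so transporting the conclusions of Theorems~\ref{thm-vi-partial},~\ref{thm-vi-partial2} back to $S$ gives the claim.

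I expect the main obstacle to be not any single deep step but the bookkeeping of the reduction: verifying that the augmented model retains lower semi-analyticity and the (GC) property, that $\phi$ extended by $0$ remains in $A_0$, and that the per-step identity $T_0^{\mathrm{aug},n}(\phi)\big|_S = T_0^n(\phi)$ (and the analogous identities for $T^n(\0)$, $J^*$, $J_\infty$) are tracked correctly through the absorbing state. The statement ``$T^n(\0)(x)$ converges'' must also be checked to mean the same thing in both formulations, which is immediate from $T^{\mathrm{aug},n}(\0)\big|_S = T^n(\0)$.
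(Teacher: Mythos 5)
Your proposal is correct and follows exactly the route the paper itself takes (and only sketches): reformulate the $\beta$-discounted problem as an undiscounted (GC) problem on $S \cup \{\Delta\}$, apply Theorems~\ref{thm-vi-partial},~\ref{thm-vi-partial2} there, and translate back, with the extension $\phi(\Delta)=0$ giving $\E^{\pi}_x\{-\phi(x_n)\}$ in the augmented chain equal to $\beta^n$ times the discounted-model expectation, which produces the redefined sets $S_{0^+}, S_0$. The bookkeeping you identify (lower semi-analyticity, the (GC) condition, membership of the extended $\phi$ in $A_0$, and the per-step operator identities) is precisely what the paper leaves implicit, and your verification of it is sound.
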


\begin{cor} \label{cor-ud-vi}
{\rm (UD)} \ 
Suppose $\sup_{x \in S} \sup_{\pi\in\Pi}  J_\pi^-(x) < \infty$. If $T_+^{\bar n}(\0) \geq \alpha J^* + b$ for some $\bar n \geq 0$, $\alpha \in (0,1]$, and scalar $b \leq 0$ (which holds if $T_+^{\bar n}(\0) \geq \alpha J_\pi + b$ for some policy $\pi$),
then $T^n(\0) \to J^*$.
\end{cor}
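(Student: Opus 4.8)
The plan is to reduce the hypothesis, which is phrased in terms of the positive-part operator $T_+$, to the hypothesis of the (UD) version of Theorem~\ref{thm-vi-partial} recorded in Cor.~\ref{cor-ud-vi0}, and then to exploit the discount factor to make the relevant set of states equal to all of $S$. Throughout I would work with the $\beta$-discounted operators $T, T_+, T_-$, for which the inequalities of Lemma~\ref{lma-ineqs} continue to hold (e.g.\ by passing to the equivalent undiscounted problem on $S \cup \{\Delta\}$). Write $M := \sup_{x \in S} \sup_{\pi \in \Pi} J_\pi^-(x)$, which is finite by assumption, and let $f(x) := -\sup_{\pi \in \Pi} J_\pi^-(x)$, so that $-M \leq f \leq 0$ and $f = T_-(f)$, since $f$ is the optimal cost function of the $\beta$-discounted nonpositive problem with one-stage cost $-g_-$.

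First I would bound the negative-part iterates below by a constant. Since $\0 \geq f$ and $T_-$ is monotone, $T_-^{\bar n}(\0) \geq T_-^{\bar n}(f) = f \geq -M$. Next, applying the first inequality in (\ref{eq-ud-ineq1b}) of Lemma~\ref{lma-ineqs} with $J_1 = J_2 = \0 \in A_0(S)$ gives $T^{\bar n}(\0) \geq T_+^{\bar n}(\0) + T_-^{\bar n}(\0) \geq T_+^{\bar n}(\0) - M$ for $\bar n \geq 1$; the case $\bar n = 0$ is immediate, as then each iterate equals $\0$. Combining with the hypothesis $T_+^{\bar n}(\0) \geq \alpha J^* + b$ yields
\[
   T^{\bar n}(\0) \geq \alpha J^* + (b - M),
\]
that is, condition (\ref{cond-thm2}) of Theorem~\ref{thm-vi-partial} holds with the constant function $\phi \equiv b - M$, the same $\bar n$, and the same $\alpha \in (0,1]$. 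Since a real-valued constant lies in $A_0(S)$, this $\phi$ is admissible.

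It then remains to identify the set $S_{0^+}$ of Cor.~\ref{cor-ud-vi0} for this $\phi$. Because $\phi$ is constant and $\beta \in [0,1)$, for every $x \in S$ we have $\sup_{\pi \in \Pi} \E^\pi_x\{ -\beta^n \phi(x_n)\} = \beta^n (M - b) \to 0$, whence $\limsup_{n \to \infty} \sup_{\pi \in \Pi} \E^\pi_x\{-\beta^n\phi(x_n)\} = 0$ and so $S_{0^+} = S$. Cor.~\ref{cor-ud-vi0} then gives $\lim_{n\to\infty}T^n(\0)(x) = J^*(x)$ for all $x$, i.e.\ $T^n(\0) \to J^*$. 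For the parenthetical claim, if $T_+^{\bar n}(\0) \geq \alpha J_\pi + b$ for some policy $\pi$, then $\alpha J_\pi \geq \alpha J^*$ (as $J_\pi \geq J^*$ and $\alpha > 0$) gives $T_+^{\bar n}(\0) \geq \alpha J^* + b$, so that case is subsumed.

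The main obstacle, and the only place where the \emph{uniform} boundedness hypothesis $\sup_{x \in S}\sup_{\pi \in \Pi} J_\pi^-(x) < \infty$ is essential, is the step producing a constant lower bound on $T_-^{\bar n}(\0)$: the mere (GC) finiteness of $J_\pi^-(x)$ at each individual $x$ would only yield a state-dependent $\phi$, which need not be constant and so would not force $S_{0^+} = S$. It is precisely the combination of this uniform bound with the decay $\beta^n \to 0$ that collapses the gap function $\phi$ in the limit and upgrades the partial convergence of Theorem~\ref{thm-vi-partial} to convergence on the whole state space.
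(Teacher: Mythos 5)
Your proposal is correct and follows essentially the same route as the paper: both use the first inequality of Lemma~\ref{lma-ineqs} to get $T^{\bar n}(\0) \geq T_+^{\bar n}(\0) + T_-^{\bar n}(\0)$, bound $T_-^{\bar n}(\0)$ below by $-\sup_{x}\sup_{\pi} J_\pi^-(x)$, and then apply Cor.~\ref{cor-ud-vi0} with the constant gap function $\phi \equiv b - M$, whose contribution vanishes because $\beta^n|b-M| \to 0$. Your extra justification of the lower bound on $T_-^{\bar n}(\0)$ via monotonicity and the fixed-point identity $f = T_-(f)$ merely fills in a step the paper asserts without proof.
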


\begin{proof}
Using the assumption and the first inequality in (\ref{eq-ud-ineq1b}), we have
$$ T^{\bar n}(\0) \geq T_+^{\bar n}(\0)  + T_-^{\bar n}(\0)  \geq \alpha J^* + b + T_-^{\bar n}(\0) \geq \alpha J^* + \bar b,$$
where $\bar b$ in the last expression is given by $\bar b = b  - \sup_{x \in S} \sup_{\pi \in \Pi}  J_\pi^-(x) > - \infty$, and the last inequality holds because $T_-^{\bar n}(\0)(x) \geq - \sup_{\pi \in \Pi} J_\pi^-(x)$ for all $x \in S$.
Let $\phi(\cdot) \equiv \bar b$ on $S$. Since for all states $x$ and policies $\pi$, $\E^\pi_x \{ - \beta^n \phi(x_n) \} \leq \beta^n |\bar b| \to 0$, the desired result follows from Cor.~\ref{cor-ud-vi0}. 
\end{proof}

\begin{rem}
The condition $T_+^{\bar n}(\0) \geq \alpha J^* + b$ in Cor.~\ref{cor-ud-vi} can also be compared to the bridging condition, since it relates the infinite-horizon cost $J^*$ to the finite-horizon cost $T_+^{\bar n}(\0)$, albeit $T_+^{\bar n}(\0)$ is from a different problem, the one associated with $g_+$. Note that this condition does \emph{not} require the convergence of value iteration $T^n_+(\0)$ to $J^{*+}$.\qed
\end{rem}

\section{Illustrative Examples} \label{sec5}
\markboth{\rm \S \ref{sec5}. Illustrative Examples}{\rm \S \ref{sec5}. Illustrative Examples}

We start with two examples, which are modified versions of \cite[Examples 6.6, 6.11]{Fein02}. They are counterexamples to the convergence of the value iteration $T^n(\0)$ for the (GC) model.

\begin{example}[The limit of $\{ T^n(\0) \}$ does not exist] \label{ex-nolimit}
The problem is deterministic. The state space is a subset of nonnegative integer pairs: $S = \{(0,0)\} \cup \big( \{ 1, 2, \ldots \} \times \{0, 1, 2, \ldots \} \big)$. For any state $(i,j)$ with $i > 0$, only control $0$ can be applied, under which the system moves to state $(i+1, j)$. At state $(0,0)$, the feasible control set is $\{0,1, 2, \ldots\}$, and with a feasible control $k$, the system moves to state $(1, k)$. All the one-stage costs are zero, except that 
$$g\big((2i+2,i), 0 \big) = 2, \qquad g\big( (2i+1,i), 0 \big) = g\big( (2i+3,i), 0 \big) = -1, \qquad i = 0, 1, \ldots.$$
At state $(0,0)$, the optimal cost $J^*\big( (0,0) \big) = 0$, but
$$\liminf_{n \to \infty} \, T^n(\0)\big( (0,0) \big) = -1, \qquad \limsup_{n \to \infty} \, T^n(\0)\big( (0,0) \big) = 0.$$
Thus, $\{ T^n(\0)\}$ does not have a limit, although $\limsup_{n \to \infty} \, T^n(\0) = J^*$, as can be verified. \qed
\end{example}

\begin{example}[$J_\infty \not= J^*$]
The state space, feasible control sets, and the dynamics of the system are as given in Example~\ref{ex-nolimit}. All the one-stage costs are zero, except that 
$$ g \big( (i,i), 0  \big) = 1, \qquad i = 0, 1, 2, \ldots.$$
Then at state $(0,0)$, $T^n(\0)\big((0,0) \big) = 0$ for all $n \geq 0$, but $J^*\big((0,0)\big) = 1$. \qed
\end{example}

The above examples have a countable state space. It is well-known, however, that even for finite state and control problems, $J_\infty \not= J^*$ can happen, as Example~\ref{ex-rel2P} showed. 

We now use these examples to illustrate the convergence of value iteration from above stated in Theorem~\ref{thm-vi}, and the partial convergence of value iteration stated in Theorems~\ref{thm-vi-partial},~\ref{thm-vi-partial2} or their corollaries.

\begin{example}[Example~\ref{ex-rel2P} Cont'd]
Consider Example~\ref{ex-rel2P}. It has $3$ states $\{0,1,2\}$. Let us write a cost function $J$ in vector form as $J = (J(0), J(1), J(2))$. In this problem any policy $\pi$ is optimal, with $J_\pi=J^* = (0, 1, 0)$, whereas $J_\infty = T(\0) = (0,1,-1)$ and it is a fixed point of $T$. Let us apply Theorem~\ref{thm-vi-partial} (or equivalently, Cor.~\ref{cor-vi-partial0} or Theorem~\ref{thm-vi-partial2}) with $\bar n = 1, \alpha=1$ and $\phi=T(\0) - J_\pi = (0,0, -1)$.  Starting from state $x=2$, there is a policy to keep the system forever at state $2$, so we have 
$$  \liminf_{n \to \infty} \, \sup_{\pi \in \Pi} \E^\pi_x \{ - \phi(x_n) \} = \limsup_{n \to \infty} \, \sup_{\pi \in \Pi} \E^\pi_x \{ - \phi(x_n) \} = 1, \qquad x = 2.$$
Starting from other states $x$, the above liminf and limsup expressions are equal to zero. So the sets $S_{0^+}, S_{0}$ in Theorem~\ref{thm-vi-partial} are given by $S_{0^+} = S_{0} = \{0, 1\}$, and according to Theorem~\ref{thm-vi-partial} or Cor.~\ref{cor-vi-partial0}, for $x \in \{0,1\}$, $T^n(\0)(x)$ must converge to $J^*(x)$, which is what we observe in this example.

Consider now value iteration starting from $J \geq J^*$. We have $J^{*+}=(0,1,0), J^{*-}=(0,0,0)$, so according to Theorem~\ref{thm-vi}, $T^n(J) \to J^*$ for any $J=(0,c,0)$, $c \geq 1$. Indeed we have $T(J) = (0,1,0) = J^*$ in this example.
 \qed 
\end{example}

\begin{example}[Example~\ref{ex-nolimit} Cont'd]
Consider Example~\ref{ex-nolimit}. 
In this problem any policy $\pi$ is optimal. Let us apply Theorem~\ref{thm-vi-partial} or Cor.~\ref{cor-vi-partial0} with $\bar n = 0, \alpha=1$ and the function $\phi = - J_\pi = - J^*$. In particular, we have 
$$ J^*\big( (k, i) \big) = \begin{cases}   1 &   k = 2i+2; \\  -1 & k = 2i+3; \\ 0 & \text{otherwise}; \end{cases} $$  
so
\begin{equation} \label{eq-ex4-a}
    \phi\big( (k, i) \big) = 0 \ \ \text{if} \  k \geq 2i+4; \qquad  \phi\big( (k, i) \big) < 0 \ \ \ \text{if and only if} \  k = 2 i +2.
\end{equation}    
Starting from any state $x=(k, i) \not= (0,0)$, the states evolve as $(k+1, i), (k+2, i), \ldots$, so 
$$ \liminf_{n \to \infty} \, \sup_{\pi \in \Pi} \E^\pi_x \{ - \phi(x_n) \} = \limsup_{n \to \infty} \, \sup_{\pi \in \Pi} \E^\pi_x \{ - \phi(x_n) \} =  0, \qquad x \not=(0,0).$$
For state $x=(0,0)$, we have $\limsup_{n \to \infty} \, \sup_{\pi \in \Pi} \E^\pi_x \{ - \phi(x_n) \}  = 1$ because for any positive even number $n$, there exists a policy $\pi$ with $\E^\pi_x \{ - \phi(x_n) \}  = \E^\pi_x \{ J^*(x_n) \}  = 1$. On the other hand, in view of the second relation in Eq.~(\ref{eq-ex4-a}), under any policy $\pi$, we have $\E^\pi_x \{ - \phi(x_n) \} \leq 0$ if $n$ is an odd number, so
$$ \liminf_{n \to \infty} \, \sup_{\pi \in \Pi} \E^\pi_x \{ - \phi(x_n) \}  = 0, \qquad x = (0,0).$$
Thus, the sets $S_{0^+}, S_0$ in Theorem~\ref{thm-vi-partial} are given by
$$S_{0^+} = S \setminus \{(0,0)\}, \qquad S_0 = S.$$
Since $S_0=S$, according to Theorem~\ref{thm-vi-partial} or Cor.~\ref{cor-vi-partial0}, $\limsup_{n \to \infty} T^n(\0) = J^*$, which is consistent with what actually happens in this example. According to Theorem~\ref{thm-vi-partial} or Cor.~\ref{cor-vi-partial0}, $T^n(\0)(x) \to J^*(x)$ for all $x \in S_{0^+}$, i.e., for all $x \not= (0,0)$, which is also consistent with what we see in the example.

Consider now the convergence of value iteration from above. Let us calculate $J^{*+}, J^{*-}$:
$$ J^{*+}\big( (k, i) \big) = \begin{cases}   2 &   k \leq 2i+2; \\ 0 & \text{otherwise}; \end{cases} \qquad \quad
      J^{*-}\big( (k, i) \big) = \begin{cases}   2 &   k \leq 2i+1; \\  1 & k = 2i+2 \ \text{or} \ 2i+3; \\ 0 & \text{otherwise}. \end{cases} $$
Comparing $J^{*+} + J^{*-}$ with the optimal cost function $J^*$ calculated earlier, we see that
according to Theorem~\ref{thm-vi}, if we start value iteration, for instance, with the function $J$ given by
$$    J\big( (k, i) \big) = \begin{cases}  c & k \leq 2i+2; \\ -1 &  k = 2i+3; \\  0 & \text{otherwise}; \end{cases}  \qquad \text{where} \ \ c \geq 1,$$
then we should have $T^n(J) \to J^*$. Indeed this is the case; in particular, for state $(0,0)$, we have that $T^n(J)\big((0,0)\big) = 0 = J^*\big( (0,0)\big)$ for all $n \geq 3$. 

As discussed immediately after Theorem~\ref{thm-vi}, we can also start value iteration from $J^{*+} - J^{*-}$ to have monotonic convergence: $T^n(J^{*+} - J^{*-}) \downarrow J^*$. It can be seen that this is the case here; actually, $J^* = J^{*+} - J^{*-}$ in this example.
\qed
\end{example}

\section{Concluding Remarks} \label{sec6}
\markboth{\rm \S \ref{sec6}. Concluding Remarks}{\rm \S \ref{sec6}. Concluding Remarks}

In this paper we considered a general class of total cost problems under the (GC) condition, and we presented several convergence results that characterize the behavior of value iteration for this class. 
Our result on convergence of value iteration from above holds for all problems in the (GC) class. It is also relevant to the policy iteration method, since policy iteration works with functions lying above the optimal cost function, and it sheds some light on why policy iteration can get stuck at a suboptimal policy in the total cost case (cf.~\cite[Example 7.3.4]{puterman94}). In the context of finding near optimal policies, another subject to which this result may be potentially relevant is finding $\epsilon \ell$-optimal stationary policies, where $\ell$ is a given nonnegative function on $S$ (see e.g., \cite{Fein87b, FeS83,Orn69,ScS87};
see also the book \cite[Chap.\ 7.2.3]{puterman94}, the survey~\cite[Chap.\ 6.7-6.9]{Fein02} and the references therein); 
investigation of this topic is needed.
Regarding value iteration, a question that requires further research is how to find efficiently an initial function in the set with guaranteed convergence. We can start with the function $J^{*+}$ or $J^{*+} - J^{*-}$, for example, but calculating $J^{*+}$ or $J^{*-}$ can be as difficult when the control space is infinite. We can also start with the cost function of some policy, but in general it can be hard to verify whether this function satisfies the convergence condition of our theorem. Another interesting question is whether by choosing suitably an initial function, one can not only ensure but also speed up the convergence of value iteration. (For average cost problems, Chen and Meyn~\cite{chm-vi} and Meyn~\cite{Mey08} have shown that this is the case.) 

We envision that our result on partial convergence of value iteration can be useful in practice in several ways. First, we may combine this result with the knowledge of a given problem to ensure that $T^n(\0) \to J^*$ holds in the given problem, or to find a subset of states for which the convergence holds, thereby extracting out useful information from the iterates produced by value iteration. Second, if we know a subset $C$ of states $x$ on which $\lim_{n \to \infty} T^n(\0)(x)$ or $\limsup_{n \to \infty} T^n(\0)(x)$ equals $J^*(x)$, we may estimate these limit values and then restart value iteration by setting the values of the initial function on $C$ according to the estimates. This heuristic way of applying value iteration is in the spirit of transfinite value iteration, and may turn out to have some interesting properties upon further study.

In summary, the analytical results presented in this paper can provide useful guidance or background, we believe, for the application of the value iteration method in practice, as well as for further theoretical or computational study of this method. As part of the theoretical foundations, these results can also be useful for applying learning and simulation-based approaches \cite{BET,Mey08,SUB,rl-book2} and stochastic approximation techniques~\cite{Bor08,KuY03} to solve total cost MDP. In that context, a particular subject for future research is Q-learning \cite{tsi94,wat89} and related asynchronous stochastic value iteration algorithms, and their convergence properties for total cost MDP under the (GC) condition.

\section*{Acknowledgments}
Several ideas presented in this paper grew out of a previous work \cite{YuB-mvipi} that I did with Professor Dimitri Bertsekas, at the Laboratory for Information and Decision Systems (LIDS), MIT. I thank Professor Bertsekas for helpful discussions then and during the course of writing this paper. I also thank Professors Eugene Feinberg and William Sudderth for helpful comments; Professor Sean Meyn for pointing out the material on value iteration in his book~\cite{Mey08}, which motivated the first convergence result given in this paper; and two anonymous reviewers, whose feedback helped to improve the presentation of this paper. This work was completed after I joined the Reinforcement Learning Artificial Intelligence (RLAI) research group at the Department of Computing Science, University of Alberta, and I am very thankful for the support from Alberta Innovates -- Technology Futures and from the RLAI group for my research.

\addcontentsline{toc}{section}{References} 
\markboth{\rm References}{\rm References}
\bibliographystyle{plain} 
\let\oldbibliography\thebibliography
\renewcommand{\thebibliography}[1]{%
  \oldbibliography{#1}%
  \setlength{\itemsep}{0pt}%
}
{\fontsize{9}{11} \selectfont
\bibliography{totalcost_bib}}
 
\thispagestyle{myheadings}  
\markboth{\rm Appendix \ref{appendix2}}{\rm Appendix \ref{appendix2}}
\begin{appendices}
\newcommand{\appsection}[1]{\let\oldthesection\thesection
  \renewcommand{\thesection}{
  \oldthesection}
  \section{#1}\let\thesection\oldthesection}

\appsection{Some Basic Properties of the (GC) Model in the Universal Measurability Framework} \label{appendix2}
\markboth{\rm Appendix \ref{appendix2}}{\rm Appendix \ref{appendix2}}

In this appendix, we prove some basic properties about policy costs for the (GC) total cost model, in the universal measurability framework. First, we prove \cref{lma-B}, which we recall here for convenience.

\lmaB*

\begin{proof}
(a) The proof arguments of \cite[Prop.\ 8.1]{bs} are applicable here. They show that for any given policy $\pi$, one can construct a Markov policy $\pi'$ such that for every $k \geq 0$, the marginal distribution of the state and control variables $(x_k, u_k)$ under $\pi'$ is the same as that under $\pi$ for the given initial state $x$. It then follows that $J_{\pi'}^+(x)=J_{\pi}^+(x)$ and $J_{\pi'}^-(x)=J_{\pi}^-(x)$. Since the difference between these two terms defines the total cost of $\pi'$ and $\pi$ at state $x$, we have $J_{\pi'}(x) = J_\pi(x)$.
\smallskip

\noindent (b) The statement is clearly true for a Markov policy $\pi$ by the Markov property and the optimality of $J^*$. 
Now suppose $\pi$ is a history-dependent policy. Let $\pi'$ be a Markov policy used in the proof of part (a), for the given state $x$ and policy $\pi$. Then, since the marginal distributions of $(x_k, u_k)$ are the same under $\pi$ and $\pi'$ for all $k \geq 0$, we have that 
$$ \E^\pi_x \left\{ \sum_{k=n}^\infty g(x_k, u_k) \right\} = \E^{\pi'}_x \left\{ \sum_{k=n}^\infty g(x_k, u_k) \right\} \qquad \text{and} \qquad 
    \E^\pi_x \left\{ J^*(x_n) \right\} =  \E^{\pi'}_x \left\{ J^*(x_n) \right\}. 
$$
Since $\E^{\pi'}_x \left\{ \sum_{k=n}^\infty g(x_k, u_k) \right\} \geq \E^{\pi'}_x \left\{ J^*(x_n) \right\}$, we obtain the first relation in part (b). 

To show the second relation in part (b), consider again the preceding Markov policy $\pi'$ and write it as $\pi'=(\mu_0', \mu_1', \ldots)$. 
Denote $\pi'_n = (\mu_n', \mu_{n+1}', \ldots)$ (which is also a Markov policy). Similarly to the preceding proof, because the marginal distributions of $(x_k,u_k)$ are the same under $\pi$ and $\pi'$ for all $k \geq 0$, we have
$$\E^\pi_x \left\{ \sum_{k=n}^\infty g_-(x_k, u_k) \right\} = \E^{\pi'}_x \left\{ \sum_{k=n}^\infty g_-(x_k, u_k) \right\} = \E^{\pi'}_x \left\{ J_{\pi'_n}^-(x_n) \right\}$$
and
$$   \oE^\pi_{n,x} \left\{ J^{*-}(x_n) \right\} =  \oE^{\pi'}_{n,x} \left\{ J^{*-}(x_n) \right\}.$$ 
By the optimality of $J^{*-}$, $J^{*-} \leq J_{\pi'_n}^- \in \M(S)$, and therefore, by the definition (\ref{eq-outerint1}) of outer integral, 
$$ \E^{\pi'}_x \left\{ J_{\pi'_n}(x_n) \right\}  \geq \oE^{\pi'}_{n,x} \left\{ J^{*-}(x_n) \right\}.$$
Combining the preceding three relations, we obtain $\E^\pi_x \left\{ \sum_{k=n}^\infty g_-(x_k, u_k) \right\} \geq \oE^\pi_{n,x} \left\{ J^{*-}(x_n) \right\}$.
\end{proof}

Let us also verify another basic fact about policy costs in the (GC) model:

\begin{lem} \label{lma-DPpolicy}
For any stationary policy $\mu$, $J_\mu=T_{\mu}(J_\mu)$.
\end{lem}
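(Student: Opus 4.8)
The plan is to prove the fixed-point equation separately for the positive and negative parts of the cost and then recombine, using the (GC) condition~(\ref{eq-tmodel}) to keep every arithmetic operation well-defined. Fix the stationary policy $\mu$ and introduce the one-stage operators $T_\mu^+, T_\mu^-$ associated with $g_+, g_-$, namely $T_\mu^{\pm}(J)(x) = \int_C \{ g_{\pm}(x,u) + \int_S J(x')\,q(dx'\!\mid x, u)\}\,\mu(du\!\mid x)$. Since $J_\mu \in \M(S)$ (stated earlier), the issue is purely to verify the identity, not measurability of the target.

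First I would establish, for the nonnegative functions $J_\mu^+$ and $J_\mu^-$, the two equations
$$ J_\mu^+ = T_\mu^+(J_\mu^+), \qquad J_\mu^- = T_\mu^-(J_\mu^-). $$
The cleanest route is through the finite-horizon costs $J_{\mu,n}^{\pm}(x) = \E^\mu_x\{\sum_{k=0}^{n-1} g_{\pm}(x_k, u_k)\}$, which satisfy $J_{\mu,n}^{\pm} = T_\mu^{\pm}(J_{\mu,n-1}^{\pm})$ directly, by conditioning on the first state-control pair and invoking the Markov property of the stationary policy in the strategic measure of \cite[Prop.\ 7.45]{bs}. As $g_{\pm} \ge 0$, we have $J_{\mu,n}^{\pm} \uparrow J_\mu^{\pm}$, so the monotone convergence theorem lets us pass to the limit inside $T_\mu^{\pm}$ and obtain the two displayed equations. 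No $\infty - \infty$ occurs at this stage because all integrands are nonnegative.

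Next I would recombine the two equations. Writing $g = g_+ - g_-$ and $J_\mu = J_\mu^+ - J_\mu^-$, the goal is
$$ J_\mu(x) = \int_C \Big\{ g(x,u) + \int_S J_\mu(x')\,q(dx'\!\mid x, u) \Big\}\,\mu(du\!\mid x) = T_\mu(J_\mu)(x). $$
The only care needed is that the subtraction never produces $\infty - \infty$, and here the (GC) condition is essential: $J_\mu^-$ is real-valued, and from the second displayed equation $\int_C \int_S J_\mu^-(x')\,q(dx'\!\mid x, u)\,\mu(du\!\mid x) \le J_\mu^-(x) < \infty$, so for $\mu(\cdot\!\mid x)$-almost every $u$ the inner integral $\int_S J_\mu^-(x')\,q(dx'\!\mid x, u)$ is finite. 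This finiteness justifies splitting both the cost term and the transition integral into their $g_+, g_-$ and $J_\mu^+, J_\mu^-$ pieces, so that $T_\mu(J_\mu) = T_\mu^+(J_\mu^+) - T_\mu^-(J_\mu^-) = J_\mu^+ - J_\mu^- = J_\mu$, which is the claim.

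The main obstacle is not the algebra but the measure-theoretic bookkeeping in the universal measurability framework: justifying the one-step recursion $J_{\mu,n}^{\pm} = T_\mu^{\pm}(J_{\mu,n-1}^{\pm})$ requires disintegrating the induced measure on $(S \times C)^\infty$ with respect to the first coordinates using the completed kernels $q$ and $\mu$, together with checking that each $J_{\mu,n}^{\pm}$ lies in $\M(S)$ so that all integrals against completed measures are well-defined and the stationary Markov property applies. These facts follow from the construction of the strategic measure and the stability of universal measurability under integration \cite[Chap.\ 7.7, Prop.\ 7.45]{bs}, but they are precisely where the framework, rather than elementary probability, must be invoked.
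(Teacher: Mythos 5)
Your proposal is correct and follows essentially the same route as the paper: decompose $J_\mu$ and $T_\mu$ into the parts associated with $g_+$ and $g_-$, establish $J_\mu^{\pm}=T_\mu^{\pm}(J_\mu^{\pm})$ (the paper cites \cite[Prop.\ 9.9]{bs}, whereas you reprove it via finite-horizon costs and monotone convergence, which is the standard argument behind that citation), and recombine. Your additional verification that the recombination never produces $\infty-\infty$, using the finiteness of $J_\mu^-$ under the (GC) condition, is a point the paper leaves implicit but is consistent with its earlier remarks on $T_\mu(J)$ for $J \geq -\sup_{\pi}J_\pi^-$.
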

\begin{proof}
We can write $J_\mu = J_\mu^+ - J_\mu^-$ and $T_{\mu}(J_\mu) = T_{\mu}^+(J_\mu^+) - T_{\mu}^-(J_\mu^-)$, where $J_\mu^+$, $T_{\mu}^+$ (resp.\ $J_\mu^-,  T_{\mu}^-$) are the total cost of $\mu$ and the dynamic programming operator associated with $\mu$, respectively, in the total cost problem with nonnegative one-stage cost function $g_+$ (resp.\ $g_-$). By \cite[Prop.\ 9.9]{bs}, 
$J_\mu^+ = T_{\mu}^+(J_\mu^+)$ and $J_\mu^- = T_{\mu}^-(J_\mu^-)$, the conclusion then follows.
\end{proof}

\appsection{Optimality Properties of the (GC) Model in the Universal Measurability Framework} \label{appendix1}
\markboth{\rm Appendix \ref{appendix1}}{\rm Appendix \ref{appendix1}}

In this appendix, we establish that in the universal measurability framework, the (GC) total cost model has the optimality properties stated by Theorems~\ref{thm-m1},~\ref{thm-m2}. We recall the two theorems here for convenience:

\thmoptone*

\thmopttwo*

Most of these optimality results can be derived by following the lines of analyses given in the paper \cite{ShrB79} and the book \cite[Chap.\ 9.2-9.3]{bs}, although these works addressed infinite-horizon undiscounted problems with purely positive or negative one-stage costs only. We will not repeat the entire analyses here. In what follows, we first explain in Section~\ref{secB.1} the key arguments needed in order to set up the stage for proofs. We then outline the proofs for Theorem~\ref{thm-m1}(a), Theorem~\ref{thm-m2}(a) and Theorem~\ref{thm-m2}(b) (see Sections~\ref{secB.2.1},~\ref{secB.2.2}, respectively); and we give detailed proofs for Theorem~\ref{thm-m1}(b), the part of Theorem~\ref{thm-m2}(a) relating to the construction of a nonrandomized near-optimal policy, and Theorem~\ref{thm-m2}(c) (see Sections~\ref{secB.2.3},~\ref{appsec-prf-thm2a},~\ref{secB.2.5}, respectively). 

Let us start the discussion with the lower semi-analyticity property of $J^*$ stated in Theorem~\ref{thm-m1}(a). If $T^n(\0) \to J^*$, we can conclude immediately by \cite[Lemma 7.30(2)]{bs} that $J^*$ must be lower semi-analytic. Unfortunately, as in the positive model (P), value iteration need not converge for the (GC) model, so a direct proof that does not rely on the convergence of value iteration is needed. It is achieved by setting up a corresponding deterministic total cost problem~\cite[Chap.\ 9.2]{bs}, to be referred to as (DM), on the space of marginal distributions of the state and control variables. The problem (DM), roughly speaking, is constructed to have the following properties:
\begin{itemize}
\item[(i)] it is equivalent to the original problem in terms of policy costs; and 
\item[(ii)] it allows direct applications of measurable selection results, in particular a Jankov-von Neumann type of selection theorem, which states that partial minimization of a lower semi-analytic function 
results in a lower semi-analytic function, together with a universally measurable minimizer or $\epsilon$-minimizer~\cite[Props.\ 7.47, 7.50]{bs}.
\end{itemize}
To prove the optimality properties of the original problem, we first analyze (DM) as an intermediate step, and we then transcribe the results from (DM) to the original problem, using the correspondences between the two problems.

\subsection{A Deterministic Model and its Relation to the Original Problem} \label{secB.1}

Recall that $\P(S)$ (resp.\ $\P(S \times C)$) denotes the set of probability measures on the Borel $\sigma$-algebra $\B(S)$ (resp.\ $\B(S \times C)$) of the state space $S$ (resp.\ the state-control space $S \times C$). The graph of the control constraint $U$ of a given total cost (GC) problem is the analytic set $\Gamma = \{ (x,u) \mid x \in S, u \in U(x) \}$.

\begin{definition}[{\cite[Defs.\ 9.4-9.6]{bs}}]  \label{def-DM} 
For a total cost (GC) problem given in Section~\ref{sec2.1}, 
the \emph{corresponding deterministic control model (DM)} is defined as follows. \\*[0.1cm]
\noindent (i) The state space, control space, and model parameters are given by:
\begin{itemize}
\item[$\bullet$] State space $\P(S)$ and control space $\P(S \times C)$.
\item[$\bullet$] Control constraint $\bar U$, which maps each state $p \in \P(S)$ to a set $\bar U(p)$ of feasible controls at $p$, defined as
$$ \bar U(p) : = \big\{ \q \in \P(S \times C) \ \big|  \ \q(B \times C) = p(B), \ \q(\Gamma) = 1, \, \forall \, B \in \B(S) \big\}.$$
I.e., the controls at state $p$ are those probability measures on $S \times C$ that have $p$ as the marginal on $S$ and assign probability one to the graph $\Gamma$ of the original control constraint $U$. (Like $U$, the graph of $\bar U$ is an analytic set in $\P(S) \times \P(S \times C)$; see the proof of~\cite[Lemma 9.1]{bs}.)
\item[$\bullet$] System function $\bar f: \P(S \times C) \to \P(S)$, which maps each control $\q \in \P(S \times C)$ to a state $\bar f(\q) \in \P(S)$, defined according to the state transition kernel $q$ of the original problem as
$$ \bar f(\q)(B) : = \int_{S \times C} q(B \!\mid x, u) \, \q\big(d(x,u)\big), \qquad B \in \B(S).$$
This function $\bar f$ specifies how the states evolve in (DM). 
\item[$\bullet$] One-stage cost function $\bar g : \P(S \times C) \to [-\infty, + \infty]$, defined by the one-stage cost function $g$ of the original problem as
$$ \bar g(\q)  : = \bar g_+(\q) - \bar g_-(\q)$$ 
where
$$      \bar g_+(\q) : =  \int_{S \times C} g_+(x,u) \, \q \big(d(x,u)\big), \qquad \bar g_-(\q):=  \int_{S \times C} g_-(x,u) \, \q \big(d(x,u)\big),$$
with the convention $\infty - \infty = \infty$. (Like $g$, the functions $\bar g_+$, $-\bar g_-$ and $\bar g$ are all lower semi-analytic~\cite[Prop.\ 7.48 and Lemma 7.30]{bs}.)
\end{itemize}
\noindent (ii) A \emph{policy for (DM)} is a sequence of mappings $\bar \pi = (\bar \mu_0, \bar \mu_1, \ldots)$ such that for each $k \geq 0$, $\bar \mu_k : \P(S) \to \P(S \times C)$ and $\bar \mu_k(p) \in \bar U(p)$ for every $p \in \P(S)$ (i.e., $\bar \mu_k$ maps each state $p$ to a feasible control at $p$). The set of all policies for (DM) is denoted by $\bar \Pi$. \\
\noindent (iii) Given an initial state $p_0 \in \P(S)$, applying a policy $\bar \pi$ in (DM) generates recursively a sequence of state and control paris $(p_0, \q_0), (p_1, \q_1), \ldots$ as
\begin{equation}
  \q_k = \bar \mu_k(p_k), \qquad p_{k+1}=\bar f(\q_k), \quad k = 0, 1, \ldots,
\end{equation}  
where $p_k$ is the state at time $k$, generated according to the system function $\bar f$.
\end{definition} 
\smallskip

We now discuss the relations between the original problem and the deterministic model (DM) just defined (cf.\ \cite[Def.\ 9.9 and Prop.\ 9.2]{bs}):
\begin{itemize}
\item[(a)] Given $p_0 \in \P(S)$ and a policy $\pi$ of the original problem, let $p_k \in \P(S)$, $\q_k \in \P(S \times C)$, $k \geq 0$, be the marginal distributions of the state $x_k$, the state-control pairs $(x_k,u_k)$, respectively, at time $k$, under the policy $\pi$, with the initial state distribution being $p_0$. We can define a corresponding policy $\bar \pi = (\bar \mu_0, \bar \mu_1, \ldots)$ for (DM) such that these marginal distributions $\q_k, p_{k+1}$ are exactly the controls and states that would be generated in (DM) according to Def.~\ref{def-DM}(iii), if $\bar \pi$ is applied and the initial state is $p_0$.
\item[(b)] Conversely, given $p_0 \in \P(S)$ and a policy $\bar \pi$ of (DM), let $\q_k$, $p_{k+1}$, $k \geq 0$ be the controls and states generated according to Def.~\ref{def-DM}(iii) under $\bar \pi$, for the initial state $p_0$. Then 
we can use the decomposition result \cite[Prop.\ 7.27 and Cor.\ 7.27.2]{bs} to define from the sequence $\{ (p_k,\q_k)\}$ a corresponding Markov policy $\pi$ for the original problem, such that these $p_k, \q_k$ would be exactly the marginal distributions of $x_k$, $(x_k,u_k)$, respectively, in the original problem if $\pi$ is applied and the initial state distribution is $p_0$.
\end{itemize}
Given these correspondences between the two problems, under the (GC) condition~(\ref{eq-tmodel}) on the original problem, we can define the optimal total cost function $\bar J^*$ for (DM) as follows.

\begin{definition}[{Cf.\ \cite[Def.\ 9.6]{bs}}] 
In (DM), for a state $p_0 \in \P(S)$, the \emph{cost of a policy} $\bar \pi$ at $p_0$ is defined as
$$ \bar J_{\bar \pi}(p_0) = \bar J_{\bar \pi}^+(p_0)  - \bar J_{\bar \pi}^-(p_0)$$
(with the convention $\infty-\infty=\infty$), where
$$ \bar J_{\bar \pi}^+(p_0)  :=\sum_{k=0}^\infty \bar g_+(\q_k), \qquad \bar J_{\bar \pi}^-(p_0)  :=\sum_{k=0}^\infty \bar g_-(\q_k), $$
and the sequence $\{\q_k\}$ is generated by $\bar \pi$ according to Def.~\ref{def-DM}(iii). 
The \emph{optimal cost} at $p_0$ is
$$ \bar J^*(p_0) = \inf_{\bar \pi \in \bar \Pi} \bar J_{\bar \pi} (p_0).$$
\end{definition}
\smallskip

For $x \in S$, denote by $\delta_x$ the Dirac measure that assigns probability $1$ to the point $x$.
From the correspondence between the two models described in (b) earlier, we see that if the original problem satisfies the (GC) condition~(\ref{eq-tmodel}), then in its corresponding (DM), for all $x \in S$ and policies $\bar \pi$ of (DM), $\bar J_{\bar \pi}^-(\delta_x)$ is finite and $\bar{J}_{\bar \pi}(\delta_x) > - \infty$, and moreover, $\bar J^*(\delta_x) > - \infty$. 

Furthermore, by the relations described in (a)-(b) earlier, the total costs of the policies in the two problems correspond as well:
\begin{itemize}
\item[($\text{a}'$)] Given $x \in S$ and a policy $\pi$ of the original problem, there exists a policy $\bar \pi$ of (DM) such that $J_\pi(x) = \bar J_{\bar \pi}(\delta_x)$. 
\item[($\text{b}'$)] Conversely, given $x \in S$ and a policy $\bar \pi$ of (DM), there exists a Markov policy $\pi$ of the original problem such that $J_\pi(x) = \bar J_{\bar \pi}(\delta_x).$ 
\end{itemize}
Thus we must have $J^*(x) = \bar J^*(\delta_x)$ for all $x \in S$.

Now a crucial property of (DM) is that its optimal cost function $\bar J^*$ is lower semi-analytic. This can be shown by writing $\bar J^*$ as the result of partial minimization of a lower semi-analytic function as follows. Let $\Delta$ consist of all points $(p_0, \q_0, \q_1, \ldots)$, called \emph{admissible sequences} \cite[Def.\ 9.7]{bs}, such that 
$$p_0 \in \P(S), \qquad (\q_0, \q_1, \ldots) \in \Delta_{p_0}, $$
where $\Delta_{p_0}$ is the set of all control sequences $(\q_0, \q_1, \ldots)$ that can be generated by some policy of (DM) for the initial state $p_0$. Define a function $G: \Delta \to [-\infty, + \infty]$ by 
$$ G \big(p_0, \q_0, \q_1, \ldots \big) : = \sum_{k=0}^\infty \bar g_+(\q_k) - \sum_{k=0}^\infty \bar g_-(\q_k) $$
(with $\infty-\infty=\infty$). 
Then $\Delta$ is an analytic subset of $\P(S) \times \big(\P(S \times C) \big)^\infty$ \cite[Lemma 9.1]{bs}; the function $G$ is lower semi-analytic \cite[Lemma 7.30(4)]{bs}; and by the definition of the optimal cost function $\bar J^*$, it can be written equivalently as the result of partial minimization of $G$:
\begin{equation} \label{eq-bJ}
    \bar J^*(p_0) = \inf_{(\q_0, \q_1, \ldots) \in \Delta_{p_0}} G \big(p_0, \q_0, \q_1, \ldots \big).
\end{equation}
Hence, by \cite[Prop.\ 7.47]{bs}, $\bar J^*$ is lower semi-analytic. 

Moreover, by a measurable selection theorem \cite[Prop.\ 7.50]{bs}, for any $\epsilon > 0$, we can select a measurable $\epsilon$-minimizer for the optimization problem in (\ref{eq-bJ}). More precisely, there exists a universally measurable mapping $\psi: \P(S) \to \big(\P(S \times C)\big)^\infty$ such that
for all $p_0 \in \P(S)$,
$$  \psi(p_0) \in \Delta_{p_0} \ \ \ \text{and} \ \ \ G\big(p_0, \psi(p_0) \big) \leq \begin{cases}  
    \bar J^*(p_0)+\epsilon, & \text{if} \  \bar J^*(p_0) > - \infty; \\
    - 1/\epsilon, & \text{if} \ \bar J^*(p_0) = - \infty.  \end{cases} $$
As noted earlier, $\bar J^*(\delta_x) > - \infty$ for all $x \in S$ since the original problem is a (GC) problem. Therefore,
\begin{equation} \label{eq-bJ-min}
  \psi(\delta_x) \in \Delta_{\delta_x} \ \ \ \text{and} \ \ \ G\big(\delta_x, \psi(\delta_x) \big) \leq \bar J^*(\delta_x)+\epsilon, \qquad \forall\, x \in S.
\end{equation}  
Furthermore, by \cite[Prop.\ 7.50(b)]{bs}, $\psi$ can be chosen to attain the infimum in (\ref{eq-bJ}) whenever this is possible; i.e.,
\begin{equation} \label{eq-bJ-min2}
   G\big(p_0, \psi(p_0) \big) = \bar J^*(p_0), \qquad \forall \, p_0 \in \I,
\end{equation}
where 
\begin{equation} \label{eq-bJ-min3}
 \I = \Big\{ \, p \in \P(S) \, \, \Big| \,  \, \exists \, (\q_0, \q_1, \ldots) \in \Delta_{p} \text{ with } \bar J^*(p) = G \big(p, \q_0, \q_1, \ldots \big)  \,  \Big\}.
\end{equation}

\subsection{Proof Arguments for Theorems~\ref{thm-m1}, \ref{thm-m2}} \label{secB.2}

\subsubsection{$J^*$ is Lower Semi-analytic} \label{secB.2.1}

The correspondence between the original problem and (DM), as described in (a)-(b) and ($\text{a}'$)-($\text{b}'$) earlier, implies that $J^*(x) = \bar J^*(\delta_x)$ for all $x \in S$. In other words, $J^*$ is the composition of the function $\bar J^*$, which was just proved to be lower semi-analytic, with the function $x \mapsto \delta_x$, which is a homeomorphism \cite[Cor.\ 7.21.1]{bs}. Then, since the composition $f_1 \circ f_2$ is lower semi-analytic for a lower semi-analytic function $f_1$ and a Borel-measurable function $f_2$ \cite[Lemma 7.30(3)]{bs}, we can conclude that $J^*$ is a lower semi-analytic function, as stated in Theorem~\ref{thm-m1}(a).

\subsubsection{Existence of an $\epsilon$-Optimal Semi-Markov Policy} \label{secB.2.2}

We consider now the statements in Theorem~\ref{thm-m2} about the existence of $\epsilon$-optimal or optimal policies. The measurable $\epsilon$-minimizer $\psi$ obtained in Eq.~(\ref{eq-bJ-min}) for (DM) gives us the universally measurable mapping $\phi(x)=\psi(\delta_x)$ \cite[Prop.\ 7.44]{bs}, which maps $x \in S$ to a sequence of probability measures in $\big(\P(S \times C)\big)^\infty$. Using the decomposition result \cite[Prop.\ 7.27]{bs} and using also the relations (a)-(b) and ($\text{a}'$)-($\text{b}'$) between the two models mentioned earlier, one can construct from $\phi$ an $\epsilon$-optimal, possibly randomized, semi-Markov policy for the original problem. The method of this construction is the same as the one used in the second half of the proof of \cite[Prop.~9.20, p.~240]{bs}. That same part of the proof, together with Eqs.~(\ref{eq-bJ-min})-(\ref{eq-bJ-min3}), also establishes Theorem~\ref{thm-m2}(b) by constructing an optimal, possibly randomized, semi-Markov policy for the original problem. 

We still have not shown that an $\epsilon$-optimal policy can be chosen to be nonrandomized, as stated in Theorem~\ref{thm-m2}(a). We shall prove this later in Appendix~\ref{appsec-prf-thm2a}.

\subsubsection{The Optimality Equation $J^*= T(J^*)$} \label{secB.2.3}
   
To show that the optimality equation $J^* = T(J^*)$ holds, we can establish first the optimality equation for the corresponding deterministic model (DM) and then transcribe the result from (DM) to the original problem. Alternatively, we can prove $J^* = T(J^*)$ directly, using the fact that there exists an $\epsilon$-optimal policy for the original problem, which we just established. We give such a proof below. The line of analysis is similar to the one for \cite[Theorem 6.3]{Fein02}. The existence of an $\epsilon$-optimal policy is used to prove $J^* \leq T(J^*)$.

\begin{proof}[Proof of Theorem~\ref{thm-m1}(b)]
We prove $J^* \leq T(J^*)$ first. Let $\epsilon >0$. As we showed earlier, there exists a policy $\hat \pi$ which is $\epsilon$-optimal for all states:
$$ J_{\hat \pi}(x) \leq J^*(x) + \epsilon, \qquad \forall \, x \in S.$$
By the selection theorem~\cite[Prop.\ 7.50]{bs}, there exists a nonrandomized stationary policy $\mu$ such that
$$ T_\mu(J^*) \leq T(J^*) + \epsilon.$$
Then, for the policy $\pi =(\mu, \hat \pi)$ (the concatenation of $\mu$ with $\hat \pi$), we have
$$ J^* \leq J_\pi = T_\mu(J_{\hat \pi}) \leq T_\mu(J^*) + \epsilon \leq T(J^*) + 2 \epsilon.$$
Since $\epsilon$ is arbitrary, we obtain $J^* \leq T(J^*)$.

We now prove $J^* \geq T(J^*)$. Consider an arbitrary state $x$. Take a policy $\hat \pi=(\hat \pi_0, \hat \pi_1, \ldots)$ with
$$ J_{\hat \pi}(x) \leq J^*(x) + \epsilon.$$
We also have $J_{\hat \pi}(x) \geq T(J^*)(x)$, since
\begin{align*}
    J_{\hat \pi}(x) & = \, \E^{\hat \pi}_x \left\{ g(x_0, u_0)\right\} + \E^{\hat \pi}_x \left\{ \sum_{k=1}^\infty g(x_k, u_k)\right\}  \\
      & \geq \, \E^{\hat \pi}_x \left\{ g(x_0, u_0)\right\} + \E^{\hat \pi}_x \left\{ J^*(x_1) \right\} \\
      & = \, \int_{C} \left\{ g(x, u) + \int_S J^*(x') \, q(dx' \!\mid x, u) \right\} \, \hat\pi_0(du \!\mid x) \\
      & \geq \, \inf_{u \in U(x)} \, \left\{ g(x, u) + \int_S J^*(x') \, q(dx' \!\mid x, u) \right\}  = \, T(J^*)(x),
\end{align*}
where the first inequality follows from Lemma~\ref{lma-B}(b).
Combining the two relations, we obtain $J^*(x) \geq T(J^*)(x) - \epsilon$. Since $x$ and $\epsilon$ are arbitrary, we obtain $J^* \geq T(J^*)$. So by the first part of the proof, $J^* = T(J^*)$.
\end{proof}

\subsubsection{Existence of an $\epsilon$-Optimal Nonrandomized Semi-Markov Policy} \label{appsec-prf-thm2a}

We showed earlier that an $\epsilon$-optimal, possibly randomized, semi-Markov policy exists for the (GC) model. We will now prove that this policy can be chosen to be nonrandomized, thus establishing Theorem~\ref{thm-m2}(a) completely. The construction of this policy is similar to the one given in the proof of \cite[Prop.~9.20, p.~239]{bs} for the negative costs model (N), except for a few details due to the difference between the negative costs model (N) and the (GC) model. To account for this difference, we will need in the proof the convergence property of value iteration in the (GC) model; in particular, we will use the fact that value iteration converges to $J^*$ from above if it starts from $J^{*+}$, for instance. 
We give a complete proof below.

\begin{proof}[Proof of Theorem~\ref{thm-m2}(a)]
Let $\epsilon > 0$ be given. Recall that $J^{*+}$ is the optimal cost function of the positive costs problem with one-stage cost function $g_+$. 
For $k \geq 0$, let $J_k = T^k(J^{*+})$ and let
$$ A_k = \big\{ x \in S \mid J_k(x) \leq J^*(x) + \epsilon/4 \big\}.$$
Since $J_k, J^*$ are lower semi-analytic, the sets $A_k$ are universally measurable, and since $J_k \downarrow J^*$ (Theorem~\ref{thm-vi}), we have $A_0 \subset A_1 \subset \cdots$ and $S = \bigcup_{k \geq 0} A_k$.
For each $k \geq 0$, consider the $(k+1)$-stage problem corresponding to $T^k(J^{*+})$, which has $J^{*+}$ as the terminal cost function at the last stage. Applying the same argument as in the proof of \cite[Prop.\ 8.3($\text{F}^+$)]{bs} for finite-horizon problems, we have that there exists a nonrandomized Markov policy $\pi$ whose $k$-stage cost function $J_{\pi,k}$ satisfies that
$$   J_{\pi,k}(x) + \E^\pi_x \{ J^{*+}(x_{k}) \} \leq J_k(x) + \epsilon/4, \qquad \forall \, x \in S.$$
For states in the set $A_k$, this implies the bound,
\begin{equation} \label{eq-app-prf0}
  J_{\pi,k}(x) + \E^\pi_x \{ J^{*+}(x_{k}) \} \leq J^*(x) + \epsilon/2, \qquad \forall \, x \in A_k.
\end{equation}  
Let us write this policy $\pi$, which is associated with $k$, as $\pi^{(k)}=(\pi^{(k)}_0, \pi^{(k)}_1, \ldots)$.

Now let $\pi^+=(\pi^+_0, \pi^+_1, \ldots)$ be a nonrandomized $\epsilon/2$-optimal Markov policy for the positive costs problem with one-stage cost function $g_+$. (The existence of such a policy follows from \cite[Prop.\ 9.19(P)]{bs}.) Since $g \leq g_+$, the cost of $\pi^+$ in the original problem can be bounded by
$$ J_{\pi^+}(x) \leq J_{\pi^+}^+(x) \leq J^{*+}(x) + \epsilon/2, \qquad \forall \, x \in S.$$
Consequently, on each set $A_k$, $k \geq 0$, we have, by Eq.~(\ref{eq-app-prf0}), that for the associated policy $\pi^{(k)}$,
\begin{equation} \label{eq-app-prf1} 
  J_{\pi^{(k)},k}(x) +  \E^{\pi^{(k)}}_x \big\{ J_{\pi^+}(x_k) \big\} \leq J^*(x) + \epsilon, \qquad \forall \, x \in A_k.
\end{equation}  

Define a semi-Markov policy 
$$\pi^\epsilon = \big(\mu_0(du_0 \!\mid x_0), \, \mu_1(du_1 \!\mid x_0, x_1), \, \ldots, \,\mu_n(d u_n \!\mid x_0, x_n), \, \ldots \big) $$ 
as follows. 
Partition the state space $S$ into a countable number of disjoint sets, $A_k \setminus A_{k-1}$, $k \geq 0$, with $A_{-1} = \emptyset$.
For initial states in the set $A_k \setminus A_{k-1}$, let $\pi^\epsilon$ be identical to the concatenation of the first $k$-stage of the policy $\pi^{(k)}$ and the policy $\pi^+$; more precisely, for $x_0 \in A_k \setminus A_{k-1}$, let
$$
   \mu_0(du_0 \!\mid x_0) = \pi^{(k)}_0(du_0 \!\mid x_0), \ \ \  \ldots, \ \ \ \mu_{k-1}(du_{k-1} \!\mid x_0, x_{k-1}) = \pi^{(k)}_{k-1}(du_{k-1} \!\mid x_{k-1}),$$
and let
$$   \mu_n( du_{n} \!\mid x_0, x_{n}) = \pi^+_n(d u_n \! \mid x_n), \ \ \ n \geq k. $$   
Then $\pi^\epsilon$ is a universally measurable semi-Markov policy, and it is nonrandomized since $\pi^+$ and $\pi^{(k)}, k \geq 0$ are all nonrandomized. Furthermore, for $x \in A_k \setminus A_{k-1}$, by the Markov property,
$$J_{\pi^\epsilon}(x) = J_{\pi^{(k)},k}(x) +  \E^{\pi^{(k)}}_x \big\{ J_{\pi^+}(x_k) \big\} \leq  J^*(x) + \epsilon,$$
where the inequality follows from Eq.~(\ref{eq-app-prf1}). Therefore, $J_{\pi^\epsilon}(x)  \leq J^*(x) + \epsilon$ for all $x \in S$; i.e., $\pi^\epsilon$ is $\epsilon$-optimal.
\end{proof}

\subsubsection{Existence of $\epsilon$-Optimal Markov or Optimal Stationary Policies when $J^* \geq 0$} \label{secB.2.5}

Finally, we verify the statements in Theorem~\ref{thm-m2}(c) about the existence of $\epsilon$-optimal Markov policies or optimal stationary policies when $J^* \geq 0$. The conclusions of Theorem~\ref{thm-m2}(c) are indeed the same as those known to hold for the positive costs model, and the proofs for the two models are also almost the same (cf.\ the proof of~\cite[Prop.\ 9.19(P)]{bs}). For clarity, however, we include the proof below. It uses the optimality equation $J^*=T(J^*)$ and the assumption $J^* \geq 0$.

\begin{proof}[Proof of Theorem~\ref{thm-m2}(c)]
Since $J^* \geq 0$ by assumption, $T(J^*)(x) = J^*(x) \geq 0 > - \infty$ for each $x \in S$. So by the selection theorem~\cite[Prop.\ 7.50]{bs}, given any $\epsilon > 0$, there exist stationary nonrandomized policies $\mu_k, k \geq 0$, such that
\begin{equation} \label{eq-prf-m2c-a}
   T_{\mu_k}(J^*) \leq T (J^*) + 2^{-k-1} \epsilon = J^* + 2^{-k-1} \epsilon.
\end{equation}   
Consider the nonrandomized Markov policy $\pi = (\mu_0, \mu_1, \ldots)$. 
By the monotonicity of the mappings $T_{\mu_0}, T_{\mu_1}, \ldots$, we have that for $n \geq 1$,
\begin{align*}
   \big(T_{\mu_0} \circ T_{\mu_1} \circ \cdots  T_{\mu_n} \big) (\0)  & \leq \big(T_{\mu_0} \circ T_{\mu_1} \circ \cdots  T_{\mu_n} \big) (J^*)  \\
      & \leq \big(T_{\mu_0} \circ T_{\mu_1} \circ \cdots  T_{\mu_{n-1}} \big) (J^*)  + 2^{-n-1} \epsilon   \\
      & \leq \big(T_{\mu_0} \circ T_{\mu_1} \circ \cdots  T_{\mu_{n-2}} \big) (J^*)  + (2^{-n} + 2^{-n-1}) \epsilon \\
       & \qquad \quad \vdots   \\
       & \leq J^* + \sum_{k=0}^{n}  2^{-k-1} \epsilon   \leq J^*  + \epsilon,
\end{align*}       
where the assumption $J^* \geq 0$ is used in the first inequality, and Eq.~(\ref{eq-prf-m2c-a}) is used in the subsequent inequalities. 
Since $J_{\pi} = \lim_{n \to \infty} \big(T_{\mu_0} \circ T_{\mu_1} \circ \cdots  T_{\mu_n} \big) (\0)$ in the (GC) model, it follows that $\pi$ is an $\epsilon$-optimal policy. Hence there exists an $\epsilon$-optimal nonrandomized Markov policy if $J^* \geq 0$.

We now prove the second statement in Theorem~\ref{thm-m2}(c). If there exists an optimal policy for each state, by Lemma~\ref{lma-B}(a), there exists for each given state $x \in S$, a Markov policy $\pi=(\mu_0, \mu_1, \ldots)$ that is optimal for $x$. Denoting $\pi' = (\mu_1, \mu_2, \ldots)$, we have by the Markov property and the monotonicity of $T_{\mu_0}$,
$$ T_{\mu_0}(J^*)(x) \leq T_{\mu_0}(J_{\pi'})(x) = J_{\pi}(x) = J^*(x) = T(J^*)(x).$$
This shows that for each $x$, the infimum defining $T(J^*)(x)$ is attained. Hence, by the selection theorem~\cite[Prop.\ 7.50]{bs}, there exists a (universally measurable) nonrandomized stationary policy $\mu$ satisfying $T_{\mu}(J^*) = J^*$. Using the assumption $J^* \geq 0$ and the monotonicity of $T_\mu$, we obtain
\begin{equation} \label{eq-prf-m2c-b}
  J_\mu = \lim_{n \to \infty} T_\mu^n(\0) \leq \lim_{n \to \infty} T_\mu^n(J^*) = J^*,
\end{equation}  
so we must have $J_\mu = J^*$, i.e., $\mu$ is optimal. Since $\mu$ is nonrandomized and stationary, this establishes the desired result.

We now prove the last statement in Theorem~\ref{thm-m2}(c). Equation~(\ref{eq-prf-m2c-b}) already shows that when $J^* \geq 0$, any stationary policy $\mu$ satisfying $T_{\mu}(J^*) = J^*$ is optimal. Conversely, if $\mu$ is an optimal stationary policy, then $J_\mu = J^*$, so by Lemma~\ref{lma-DPpolicy}, $T_{\mu}(J^*) =  J^*$. This completes the proof.
\end{proof}

\appsection{About Transfinite Value Iteration and an Important Lemma for Proving Theorem~\ref{thm-relate2P}} \label{appendix3}
\markboth{\rm Appendix \ref{appendix3}}{\rm Appendix \ref{appendix3}}

In this appendix we first give an informal introduction of ordinals, in connection with transfinite value iteration. 
We then focus on an important lemma needed to establish Theorem~\ref{thm-relate2P} on the convergence of transfinite value iteration in the case where the optimal cost function is nonnegative.

\subsection{Ordinals and Transfinite Value Iteration} \label{appendix3-1}
 
Ordinals are well-ordered sets chosen in such a way that each well-ordered set is order-isomorphic to one and only one ordinal; in other words, each ordinal represents an order type. We can linearly order these order types so that a more complicated ordering is represented by a larger ordinal. The book \cite[Chaps.\ A.3]{Dud02} gives a good introduction of the theory of ordinals, and the informal explanations we give here are partly based on this reference.
 
Intuitively, we can construct ordinals starting from the smallest set, the empty set, as follows. Ordinals with finite elements, named after the number of their elements, are:
$$ \text{ordinal $0$}: \emptyset, \quad \text{ordinal $1$}:  \{ \emptyset\}, \quad \text{ordinal $2$}: \big\{ \emptyset, \{ \emptyset\} \big\}, \quad \text{ordinal $3$}: \big\{  \emptyset, \{ \emptyset\}, \{ \emptyset, \{ \emptyset\} \}  \big\}, \quad \ldots.$$
In general, each element of an ordinal is a set itself, the \emph{successor ordinal} $\xi+1$ of an ordinal $\xi$ is given by the set $\xi \cup \{\xi\}$, and any ordinal is well-ordered by the relation $\in$. 
The preceding finite ordinals are order-isomorphic to the simple finite sets $\emptyset, \{1\}$, $\{1,2\}, \ldots$, respectively, with the usual ordering. 
The union of any set of ordinals is also an ordinal. 
The first infinite ordinal, denoted by $\omega$, is the union of the finite ordinals $1,2,\ldots$, 
and order-isomorphic to the set of positive integers. 
The ordinal $\omega$ is equal to the union of its elements---such an ordinal is called a \emph{limit ordinal}.
It can be shown that an ordinal is either a limit ordinal or a successor of some ordinal $\xi$, which is its largest element. In other words, with the ``$+1$'' and union operations, increasingly larger ordinals are constructed successively. (This process goes on indefinitely; there is no largest ordinal.)

The transfinite value iteration discussed in Section~\ref{sec4.1} is a recursion defined on the well-ordered set $\{0, 1, \ldots, \omega_1\}$, with $\omega_1$ being the smallest ordinal that has an uncountably infinite number of elements. Iterations associated with those limit ordinals involve taking various limits of the ``past'' value iteration iterates, roughly speaking, whereas iterations associated with those ordinals of the form $\xi+1$ are similar to ordinary value iteration with the dynamic programming operator. For example, consider the transfinite value iteration (\ref{eq-transvi0})-(\ref{eq-transvi1}) given in Section~\ref{sec4.1}, which involves the mapping $\T(J) = \max\{ J, \, T(J)\}$.  Let $J_0$ be the initial function, and denote $J_{\xi} = \T^{\xi}(J_0)$ for an ordinal $\xi < \omega_1$. For $\xi < \omega$ (the first infinite ordinal), the iterates $J_\xi$ coincide with the iterates produced by ordinary fixed point iteration with $\T$: 
$$ J_1 = \T(J_0), \quad \ldots, \quad J_{n+1} = \T(J_n), \ \ldots,$$
which form an increasing sequence of functions by the increasing property of $\T$, $\T(J) \geq J$.
For the limit ordinal $\omega$, $J_\omega$ can be expressed as
$$ J_\omega = \T \Big( \lim_{n \to \infty} J_n \Big) = \T \Big( \sup_{n \geq 0} J_n \Big),$$
and for the ordinal $\omega+1$, by the increasing property of $\T$, $J_{\omega+1}$ is simply given by
$$ J_{\omega+1} = \T \big(J_\omega \big).$$
Ordinarily, we stop value iteration at the ordinal $\omega$. But when the limit function $\lim_{n \to \infty} J_n$ does not equal $J^*$ (i.e., ordinary value iteration fails to converge to $J^*$), then transfinite value iteration can be used to analyze the behavior of value iteration, and it can shed light on the nature of value iteration, as it does for the positive costs model~\cite{MS92}.

\subsection{A Lemma for Convergence of Transfinite Value Iteration} \label{appendix3-2}
We now state and outline the proof of an important lemma needed to establish Theorem~\ref{thm-relate2P} on the convergence of transfinite value iteration. Recall that Theorem~\ref{thm-relate2P} states that $\T^{\omega_1}(\0) = J^*$ if $J^* \geq 0$, where 
$\T(J) = \max\{ J, \, T(J)\}$. Let $J_{\omega_1} = \T^{\omega_1}(\0)$. 
In proving \cref{thm-relate2P}, we used the following lemma:

\begin{lem} \label{lma-rel2P}
The function $J_{\omega_1}$ is lower semi-analytic and satisfies $$J_{\omega_1} = \T(J_{\omega_1}) \geq T(J_{\omega_1}).$$
\end{lem}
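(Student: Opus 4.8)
The plan is to separate the two assertions while noting that they are essentially one. Since $\T(J)=\max\{J,T(J)\}\ge J$ for every $J$, the equality $J_{\omega_1}=\T(J_{\omega_1})$ is equivalent to the single inequality $T(J_{\omega_1})\le J_{\omega_1}$, and this inequality simultaneously delivers the claimed relation $J_{\omega_1}\ge T(J_{\omega_1})$. Thus the proof reduces to two tasks: (i) showing that $J_{\omega_1}=\sup_{\xi<\omega_1}\T^\xi(\0)$ is lower semi-analytic, and (ii) showing $T(J_{\omega_1})\le J_{\omega_1}$. Throughout I would use that $\{\T^\xi(\0)\}$ is nondecreasing in $\xi$, by the increasing property $\T(J)\ge J$, and bounded below by $\0$.

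For the countable stages I would argue by transfinite induction that $\T^\xi(\0)\in A(S)$ for every $\xi<\omega_1$. The base case is immediate; at a successor $\xi+1$ the function $\T^{\xi+1}(\0)=\max\{\T^\xi(\0),\,T(\T^\xi(\0))\}$ is lower semi-analytic because $T$ maps $A(S)$ into $A(S)$ (partial minimization over the analytic constraint $\Gamma$ preserves lower semi-analyticity, by the selection results \cite[Props.\ 7.47, 7.48]{bs}) and a finite max of lower semi-analytic functions is lower semi-analytic \cite[Lemma 7.30]{bs}. At a limit ordinal $\xi<\omega_1$ the index set $\{\eta<\xi\}$ is countable, so $\sup_{\eta<\xi}\T^\eta(\0)$ is a countable supremum of lower semi-analytic functions, again lower semi-analytic by \cite[Lemma 7.30(2)]{bs}, and applying $\T$ keeps it in $A(S)$. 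This is exactly the fact already invoked after Eq.~(\ref{eq-transvi1}).

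The real work is the passage to $\omega_1$, where $J_{\omega_1}$ is an uncountable supremum and both tasks become genuinely delicate. Here I would adapt the argument of Maitra and Sudderth \cite{MS92}, whose engine is the regularity (uncountable cofinality) of $\omega_1$: every countable set of countable ordinals is bounded by some ordinal below $\omega_1$. Two consequences drive the proof. First, for each state $x$ the map $\xi\mapsto \T^\xi(\0)(x)$ is a nondecreasing transfinite sequence in $[0,\infty]$, and since $\omega_1$ does not order-embed into $\mathbb{R}$ it can increase strictly only countably often; hence it is eventually constant, so $J_{\omega_1}(x)=\T^{\xi_x}(\0)(x)$ for some countable $\xi_x$. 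Second, for a fixed pair $(x,u)$ the monotone convergence theorem gives $\int J_{\omega_1}\,dq(\cdot\mid x,u)=\sup_\xi\int \T^\xi(\0)\,dq(\cdot\mid x,u)$, and by cofinality this supremum is already attained at some countable ordinal. To prove $T(J_{\omega_1})(x)\le J_{\omega_1}(x)$ I would then run a diagonal construction inside $\omega_1$: alternately pick an $\epsilon$-minimizing control for the current countable stage and advance the stage (staying below $\omega_1$) so as to stabilize that control's transition integral; taking the countable supremum of the stages yields a countable $\zeta$ together with a near-minimizing control whose integral against $J_{\omega_1}$ already equals its integral against $\T^\zeta(\0)$, giving $T(J_{\omega_1})(x)\le T(\T^\zeta(\0))(x)+\epsilon\le J_{\omega_1}(x)+\epsilon$. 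For the lower semi-analyticity of $J_{\omega_1}$ I would likewise follow \cite{MS92}, exploiting the same cofinality to keep $J_{\omega_1}$ within the lower semi-analytic class rather than treating it as an arbitrary uncountable supremum.

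The main obstacle is precisely this $\omega_1$-stage inf--sup interchange underlying $T(J_{\omega_1})\le J_{\omega_1}$: because no compactness is assumed on the control sets, a naive countable minimizing sequence of controls may ``escape'' toward states that have not yet stabilized, so that $T(J_{\omega_1})(x)$ can a priori exceed $\sup_\xi T(\T^\xi(\0))(x)$. The reason for carrying the recursion all the way to the first uncountable ordinal is exactly to defeat this escape, since any countable collection of witnessing controls and ordinals is captured strictly below $\omega_1$. Making this transfinite capture rigorous, together with the companion claim that the uncountable supremum $J_{\omega_1}$ remains lower semi-analytic, is the part that requires the in-depth use of the universal measurability framework and where I would spend the bulk of the effort; once it is in place, combining $T(J_{\omega_1})\le J_{\omega_1}$ with $\T(J)\ge J$ closes the lemma, and feeding it into Prop.~\ref{prp-relate2P} gives Theorem~\ref{thm-relate2P}.
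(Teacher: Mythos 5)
Your reduction of the lemma to the single inequality $T(J_{\omega_1})\le J_{\omega_1}$ plus lower semi-analyticity is fine, and your treatment of the countable stages matches the paper. But the mechanism you propose for the passage to $\omega_1$ --- pointwise stabilization, ``monotone convergence'' along $\omega_1$, and a diagonal construction --- has two genuine gaps, and they sit exactly where the content of the lemma lies. First, the interchange $\int J_{\omega_1}\,dq(\cdot\mid x,u)=\sup_{\xi<\omega_1}\int \T^\xi(\0)\,dq(\cdot\mid x,u)$ is not an instance of the monotone convergence theorem: that theorem is for countable increasing sequences, and (outer) integrals are not continuous along increasing $\omega_1$-indexed families --- an increasing $\omega_1$-union of null sets can have full outer measure (e.g.\ the initial segments of a well-ordering of $[0,1]$ in type $\omega_1$ under CH). Since at this point you do not yet know that $J_{\omega_1}$ is measurable, this step is essentially as hard as the lemma itself. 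Second, the diagonalization does not close: the near-minimizing control you select at the limit stage $\zeta=\sup_n\zeta_n$ is a \emph{fresh} control, chosen after $\zeta$ is fixed, and nothing forces its integral against $J_{\omega_1}$ to have stabilized by stage $\zeta$; stabilizing it requires advancing the stage again, and the process never terminates, while the controls $u_n$ whose integrals you did stabilize are only near-minimizing for the earlier stages $\zeta_n$. Indeed, for an abstract nondecreasing family one can have $\sup_\xi\inf_u h_\xi(u)=0$ while $\inf_u\sup_\xi h_\xi(u)=1$ (take the control set well-ordered in type $\omega_1$ and $h_\xi(u)=\1[u\le\xi]$), so no cofinality/diagonal argument alone can deliver the required $\inf$--$\sup$ interchange; what rules out such behavior here is the analytic structure of the operator, which your sketch invokes only by deferring to \cite{MS92}.

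That deferral also mischaracterizes what \cite{MS92} actually does: the engine is not the regularity of $\omega_1$ but a Moschovakis-type inductive-definability theorem for \emph{uniformly analytic} set operators, whose proof rests on $\Sigma^1_1$-boundedness. The paper's proof identifies $\T$ with the set operator $\Psi$ of Eq.~(\ref{def-psi}) acting on epigraphs in $S\times[0,+\infty]$ --- defined for \emph{arbitrary} subsets via outer measures so that $\Psi\big(\epi(J)\big)=\epi\big(\T(J)\big)$ for lower semi-analytic $J\ge 0$ --- checks that $\Psi$ is monotone, satisfies $\Psi(E)\subset E$, and is uniformly analytic, verifies by transfinite induction that $A_\xi=\epi\big(\T^\xi(\0)\big)$ for all $\xi<\omega_1$, and then obtains both conclusions of \cref{lma-rel2P} at once: $A_{\omega_1}=\epi(J_{\omega_1})$ is analytic and $\Psi(A_{\omega_1})=A_{\omega_1}$. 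To complete your proof you would need to import this fixed-point machinery (or an equivalent boundedness argument); the pointwise stabilization observations, while true, do not suffice.
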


\old{From \cref{lma-rel2P}, we obtain \cref{thm-relate2P} immediately, by combining this lemma with Prop.~\ref{prp-relate2P} and the assumption $J^* \geq 0$, as follows. 
Since $J^* \geq 0$, using the definition of transfinite value iteration and the monotonicity of $T$, and using also the fact $J^* = T(J^*)$, we have
$$J_{\omega_1}  = \T^{\omega_1}(\0) \leq \T^{\omega_1}(J^*) = J^*.$$
On the other hand, by the nondecreasing property of the transfinite value iterates, we have $J_{\omega_1} = \T^{\omega_1}(\0) \geq \T^0(\0) = \0$, 
and by \cref{lma-rel2P}, we also have $J_{\omega_1} \geq T(J_{\omega_1})$ and $J_{\omega_1}$ is lower semi-analytic. So applying Prop.~\ref{prp-relate2P} with $J = J_{\omega_1}$, we can conclude that $J_{\omega_1} \geq J^*$. Therefore, we must have $J_{\omega_1} = J^*$, as stated by Theorem~\ref{thm-relate2P}.}

Lemma~\ref{lma-rel2P} can be established by using the proof arguments given in Maitra and Sudderth's analysis of transfinite value iteration for the positive costs model~\cite{MS92}. In the rest of this appendix, we will explain the main proof steps---we do not include a detailed proof because most arguments are the same as in~\cite{MS92}. (The paper \cite{MS92} considers the reward maximization framework and the negative rewards model, which is equivalent to the positive costs model in the cost minimization framework we use. However, because of this difference, some of our notation and definitions below will appear to be different from those in~\cite{MS92}.) 

First, note that if for some ordinal $\xi < \omega_1$, $\T^{\xi}(\0)$ is a fixed point of $\T$ so that $J_{\omega_1} = \T^{\xi}(\0)$, then it follows immediately that $J_{\omega_1}$ is lower semi-analytic, since $\sup_n f_n$ for a sequence of lower semi-analytic functions $f_n$ is a lower semi-analytic function~\cite[Lemma 7.30]{bs}. However, in general, $J_{\omega_1}$ is the pointwise supremum of an uncountable set of lower semi-analytic functions, so the preceding lemma does not follow easily from the argument we just mentioned. The line of analysis to establish the lemma is as follows.

We are going to apply a deep theorem from the descriptive set theory. This theorem concerns fixed points of a mapping $\Psi$ from the power-set of a Borel space $Y$ to the power-set of $Y$. More specifically, a fixed point of $\Psi$ is a set $E \subset Y$ satisfying $E = \Psi(E)$. 
For a given set $A \subset Y$, consider the set $A_{\omega_1}$ defined by the transfinite recursion,
\begin{equation} \label{eq-Aomg}
 A_0 =  A; \qquad A_\xi = \Psi \left( \bigcap_{\eta < \xi} A_\eta \right), \quad 0 < \xi < \omega_1; \qquad A_{\omega_1} = \bigcap_{\xi < \omega_1} A_\xi. 
\end{equation} 
The theorem states that if the mapping $\Psi$ is such that:
\begin{itemize}
\item[(i)] $\Psi$ is monotone (i.e., $\Psi(E_1) \subset \Psi(E_2)$ if $E_1 \subset E_2 \subset Y$), 
\item[(ii)] $\Psi(E) \subset E$ for $E \subset Y$, and $\Psi$ is \emph{uniformly analytic} in the sense that for any Polish space $\Omega$ and analytic set $C \subset \Omega \times Y$, the set $\{ (\omega, y) \in \Omega \times Y \mid y \in \Psi(C_{\omega}) \}$ is analytic (here $C_{\omega}=\{ y \in Y\mid (\omega, y) \in C \}$),
\end{itemize}
then for any analytic set $A \subset Y$, the set $A_{\omega_1}$ obtained by the recursion (\ref{eq-Aomg}) is analytic and satisfies $A_{\omega_1} = \Psi ( A_{\omega_1})$, and is indeed the largest subset of $A$ invariant under $\Psi$.

For a proof of the preceding theorem, see Zinsmeister \cite{Zin}, which also showed that the uniform analyticity condition in (ii) implies the monotonicity of $\Psi$ for analytic sets. According to \cite{MS92,Zin}, this theorem is a special case of a very general result of Moschovakis; see \cite{MS92,Zin} for related references.

To apply this theorem to our problem, we take $Y = S \times [ 0, +\infty]$, and we equate nonnegative functions with their epigraphs and equate the mapping $\T$ to a mapping that maps epigraphs to epigraphs. 
Specifically, for any function $J \geq 0$, we consider equivalently its epigraph $\epi(J)$ given by
$$ \epi(J) = \big\{ (x, z) \in S \times [0,+\infty] \ \big| \   J(x) \leq z \, \big\},$$
and we define a mapping $\Psi$ on the power-set of $S \times [0, +\infty]$ in such a way that for a lower semi-analytic function $J \geq 0$,
\begin{equation} \label{eq-psi}
    \epi\big(\T(J) \big) = \Psi \big(\epi(J)\big).
\end{equation}
In other words, $\Psi$ is the mapping equivalent to $\T$, on the space of epigraphs of functions. However, to apply the theorem, we also need to define $\Psi$ for all subsets in $S \times [0, +\infty]$, including the epigraphs of those non-measurable functions. This is achieved in \cite{MS92} by using an outer-integral formulation to define $\Psi$. Specifically, for any set $E \subset S \times [0, +\infty]$, we define
\begin{equation} \label{def-psi}
   \Psi(E) = E \, \bigcap \left\{ (x, z) \in S \times [0, + \infty] \ \Big | \  \inf_{u \in U(x)} \left\{ g(x, u) +  \big( q(\cdot \mid x, u) \times \lambda \big)^*\big(E^c\big) \,  \right\} \leq z \, \right\}.
\end{equation}   
Here $\lambda$ is the Lebesgue measure on $[0, +\infty]$; for $p \in \P(S)$, $(p \times \lambda)^*$ denotes the outer measure with respect to the product measure $p \times \lambda$ on $S \times [0, +\infty]$; and $E^c$ denotes the complement of a set $E$: $E^c = (S \times [0, +\infty] )\setminus E$.

By its definition (\ref{def-psi}), $\Psi$ is monotone and $\Psi(E) \subset E$. 
By \cite[Lemma 3.1]{MS92}, $\Psi$ is uniformly analytic.
\footnote{More specifically, let $\Omega$ be a Polish space and let $C$ be any analytic set in $\Omega \times Y$ where $Y=S \times [0,+\infty]$. Let $\Phi$ be the mapping which maps $E \subset Y$ to the set given by the expression in $\{\cdots\}$ in the definition~(\ref{def-psi}) for $\Psi$, so that $\Psi(E) = E \cap \Phi(E)$. \cite[Lemma 3.1]{MS92} shows that $\Phi$ is uniformly analytic, i.e., the set $D=\{(\omega, y) \mid \omega \in \Omega, y \in \Phi(C_\omega) \}$ is analytic. Then, $\{ (\omega, y) \mid \omega \in \Omega, y \in \Psi(C_\omega) \} = C \cap D$ is also analytic. Hence $\Psi$ is uniformly analytic.}
The mapping $\Psi$ also satisfies the desired equality~(\ref{eq-psi}): for a lower semi-analytic function $J \geq 0$, we have by Fubini's theorem,
$$ \big( q(\cdot \!\mid x, u) \times \lambda \big)^*\big(\epi(J)^c\big)  = \int_{S} J(x') \, q(dx' \!\mid x, u),$$
so
$$  \inf_{u \in U(x)} \left\{ g(x, u) +  \big( q(\cdot \mid x, u) \times \lambda \big)^*\big(\epi(J)^c\big) \right\} = T(J)(x)  $$
and $(x,z) \in \Psi\big(\epi(J)\big)$ if and only if
$$   J(x) \leq z \ \ \text{and} \ \  T(J)(x) \leq z   \quad \Longleftrightarrow \quad \max\big\{ J(x), \, T(J)(x) \big\} \leq z \quad \Longleftrightarrow \quad 
    (x, z) \in \epi\big(\T(J)\big).$$

Now we can apply the theorem from the descriptive set theory mentioned earlier. 
Let $A_{\omega_1}$ and $A_{\xi}$, $\xi < \omega_1$, be given by the transfinite recursion~(\ref{eq-Aomg}) with $A_0 = \epi(\0) = S \times [0,+\infty]$. 
Denote $J_\xi = \T^\xi(\0)$ for $\xi \leq \omega_1$; in particular $J_0 = \0$. Similar to \cite[Lemma 3.3]{MS92}, we show by transfinite induction that $A_{\omega_1} = \epi(J_{\omega_1})$.
We have $A_0=\epi(J_0)$. For any given $\xi < \omega_1$, suppose $A_\eta = \epi\big(J_\eta\big)$ for all $\eta < \xi$, and we have
\begin{align*}
  A_{\xi} & = \Psi \left( \bigcap_{\eta < \xi} A_\eta \right) = \Psi \left( \bigcap_{\eta < \xi} \epi\big(J_\eta\big) \right) \\
   & = \Psi \left( \epi\Big( \sup_{\eta < \xi} J_\eta \Big) \right) = \epi \left( \T \Big( \sup_{\eta < \xi} J_\eta \Big) \right) 
   = \epi \big( J_\xi \big),
\end{align*}
where we used Eq.~(\ref{eq-psi}) in the second to last equality, and we used the definition of $\T^\xi(\0)$ in the last equality [cf.\ Eq.~(\ref{eq-transvi0})].  
By transfinite induction~\cite[Theorem 1.3.1]{Dud02}, this implies
$A_{\xi} = \epi(J_\xi)$ for all $\xi < \omega_1,$
and consequently,
$$ A_{\omega_1} = \bigcap_{\xi < \omega_1} A_{\xi} =  \bigcap_{\xi < \omega_1} \epi(J_\xi) = \epi\Big(\sup_{\xi < \omega_1} J_\xi \Big) =\epi\big( J_{\omega_1} \big),$$
where in the last equality we used the definition $\T^{\omega_1}(\0) = \sup_{\xi < \omega_1} J_\xi$ [cf.~Eq.~(\ref{eq-transvi1})].

Now by the theorem mentioned earlier,
$A_{\omega_1}$ is analytic and $A_{\omega_1} = \Psi(A_{\omega_1})$. 
This means, first, that $\epi(J_{\omega_1})=A_{\omega_1}$ is an analytic set, which is equivalent to that $J_{\omega_1}$ is lower semi-analytic (cf.\ \cite[p.~186]{bs}). 
Second, 
$$ \epi \big( J_{\omega_1} \big) = \Psi\big(\epi(J_{\omega_1})\big) = \epi\big(\T(J_{\omega_1})\big),$$
where the second equality follows from Eq.~(\ref{eq-psi}). Since two functions are identical if and only if they have the same epigraph, from the preceding relation we obtain
$$ J_{\omega_1} = \T(J_{\omega_1}) = \max \big\{ J_{\omega_1}, T (J_{\omega_1} ) \big\} \geq T(J_{\omega_1}).$$
This establishes~\cref{lma-rel2P}. 

\end{appendices}

\end{document}